\newtheorem*{rep@theorem}{\rep@title}
\newcommand{\newreptheorem}[2]{%
\newenvironment{rep#1}[1]{%
 \def\rep@title{#2 \ref{##1}}%
 \begin{rep@theorem}}%
 {\end{rep@theorem}}}
\newtheorem{theorem}{Theorem}[section]
\newtheorem{corollary}[theorem]{Corollary}
\newtheorem{lemma}[theorem]{Lemma}
\newtheorem{proposition}[theorem]{Proposition}
\newtheorem{claim}[theorem]{Claim}
\theoremstyle{definition}
\newtheorem{definition}[theorem]{Definition}
\newtheorem{remark}[theorem]{Remark}
\newtheorem{example}[theorem]{Example}
\newdimen\pIR
\newcommand\StevesR{{\rm I\kern\pIR R}}
\def\Reals#1{\StevesR^{#1}}
\newcommand\symExpec{\operatorname{\mathbb{E}}\displaylimits}
\def\expec#1{\symExpec_{#1} }
\newcommand{\E}{\mathbb{E}}
\def\defeq{\stackrel{\mathrm{def}}{=}}
\def\setof#1{\left\{#1  \right\}}
\def\aa{\pmb{\mathit{a}}}
\newcommand\bb{\boldsymbol{\mathit{b}}}
\newcommand\cc{\boldsymbol{\mathit{c}}}
\newcommand\dd{\boldsymbol{\mathit{d}}}
\newcommand\nn{\boldsymbol{\mathit{n}}}
\newcommand\mm{\boldsymbol{\mathit{m}}}
\newcommand\phat{{\widehat{{p}}}}
\newcommand\ptil{{\widetilde{{p}}}}
\def\Branden{Br\"{a}nd\'{e}n}
\newcommand{\dan}[1]{}
\newcommand{\nikhil}[1]{}
\newcommand{\adam}[1]{}
\newcommand{\R}{\mathbb{R}}
\newcommand{\AND}{\quad \text{and} \quad}
\newcommand{\mydet}[1]{\det\left(#1\right)}
\newcommand{\smax}[2]{#1\text{--}\mathsf{max}\left(#2 \right)}
\newcommand{\maxroot}[1]{\mathsf{maxroot} \left(#1 \right)}
\newcommand*\myrect[1]{%
  \begin{tikzpicture}
    \node[draw,rectangle,inner sep=0pt] {#1};
  \end{tikzpicture}}
\newcommand{\sqsum}{\mathbin{\text{\myrect{\textnormal{+}}}}}
\newcommand{\recsum}{\mathbin{\textnormal{\myrect{++}}}}
\newcommand{\sqmult}{\mathbin{\textnormal{\myrect{$\times$}}}}
\newcommand{\cauchy}[2]{\mathcal{G}_{#1} \left(#2 \right)}
\newcommand{\invcauchy}[2]{\mathcal{K}_{#1} \left(#2 \right)}
\newcommand{\rtrans}[2]{\mathcal{R}_{#1} \left(#2 \right)}
\newcommand{\strans}[2]{\widetilde{\mathcal{S}}_{#1} \left(#2 \right)}
\newcommand{\mtrans}[2]{\widetilde{\mathcal{M}}_{#1} \left(#2 \right)}
\newcommand{\invmtrans}[2]{\widetilde{\mathcal{M}}^{(-1)}_{#1} \left(#2 \right)}
\newcommand{\subsquare}{\mathbb{S}}
\newcommand{\deriv}{D}
\newcommand{\dxd}{\deriv x \deriv}
\newcommand{\shiftoper}[1]{U_{#1}}
\newcommand{\multoper}[1]{V_{#1}}
\newcommand{\rrpoly}[1]{\mathbb{P} (#1)}
\newcommand{\RRpoly}{\mathbb{P}}
\newcommand{\rrpos}[1]{\mathbb{P}^{+} (#1)}
\newcommand{\RRpos}{\mathbb{P}^{+}}
\newcommand{\rrneg}[1]{\mathbb{P}^{-} (#1)}
\newcommand{\RRneg}{\mathbb{P}^{-}}
\newcommand{\charp}[2]{\chi_{#2} \left(#1 \right)}
\newcommand{\csteifel}[2]{\mathbb{C}^{#1}_{#2}}
\begin{document}

\title{
Finite free convolutions of polynomials
\thanks{
This research was partially supported by NSF grants CCF-1111257, CCF-1562041, 
  an NSF Mathematical Sciences Postdoctoral Research Fellowship, Grant No. DMS-0902962,
  a Simons Investigator Award to Daniel Spielman, and a MacArthur Fellowship.
}}

\author{
Adam W. Marcus\\
Princeton University\\
\and
Daniel A. Spielman \\ 
Yale University
\and 
Nikhil Srivastava\\
UC Berkeley
}

\maketitle

\begin{abstract}
We study three convolutions of polynomials in the context of free probability theory.
We prove that these convolutions can be written as the expected characteristic polynomials of sums and products of unitarily invariant random matrices.
The symmetric additive and multiplicative convolutions were introduced by Walsh and Szeg{\"o} in different contexts, and have been studied for a century.
The asymmetric additive convolution, and the connection of all of them with random matrices, is new.
By developing the analogy with free probability, we prove that these convolutions produce real rooted polynomials and provide strong bounds
  on the locations of the roots of these polynomials.
\dan{Turn this off}
\end{abstract}

\tableofcontents

\section{Introduction}
We study three convolutions on polynomials that are inspired by free probability theory.\nikhil{is inspired still appropriate?}\dan{I think so.}
Instead of capturing the limiting spectral distributions of ensembles of random matrices, we show that
  these capture the expected characteristic polynomials of random matrices in a fixed dimension.
We develop the analogy with Free Probability by proving that
   Voiculescu's $R$ and $S$-transforms can be used to prove upper bounds on the extreme roots of these polynomials.
Two of the convolutions have been classically studied.
The third, and the connection of all of them with random matrices, is new.
We begin by defining the three convolutions and stating the algebraic identities that establish this connection, as well
  as basic results regarding their real-rootedness properties.
\subsection{Algebraic Identities and Real Rootedness}
\subsubsection*{Symmetric Additive Convolution}
\begin{definition}\label{def:sqsum}
For complex univariate polynomials
\begin{align*}
  p (x) = \sum_{i=0}^{d} x^{d-i} (-1)^{i} a_{i}
\quad  \text{ and} \quad 
  q (x)  = \sum_{i=0}^{d} x^{d-i} (-1)^{i} b_{i}
\end{align*}
of degree at most $d$, the {\em $d$th symmetric additive convolution} of $p$ and $q$ is defined as:
\begin{align}\label{eqn:symmetricSum}
	p (x) \sqsum_{d} q (x)  &\defeq \sum_{k=0}^{d} x^{d-k} (-1)^{k} 
	\sum_{i+j = k} \frac{(d-i)! (d-j)!}{d! (d-k)!}   a_{i} b_{j} 
	\\&= \frac{1}{d!}\sum_{k=0}^d D^k p(x) D^{d-k} q(0) \nonumber
	\\&= \frac{1}{d!}\sum_{k=0}^d D^k q(x) D^{d-k} p(0), \nonumber
\end{align}
\end{definition}
\noindent where $D$ denotes differentiation with respect to $x$. 
Note that we have defined a {\em sequence} of convolutions parameterized by $d$, and in general $p\sqsum_c q \neq p\sqsum_{d} q$, even if both
$p$ and $q$ have degree less than $c$ and $d$; we will discuss this point more in Section \ref{sec:diffdegs}.

Observe that the definition above has the compact form:
\begin{equation}\label{eqn:sqsumcompact}
(p\sqsum_d q)(x) = \hat{p}(D)\hat{q}(D)x^d,\end{equation}
where $\hat{p}$ and $\hat{q}$ are the unique polynomials satisfying $\hat{p}(D)x^d=p(x)$ and $\hat{q}(D)x^d=q(x)$.
This reveals that $\sqsum_d$ is symmetric, bilinear in its arguments, and commutes with differentiation and translation; i.e.,
$$ (Dp(x))\sqsum_d q(x) = D(p(x)\sqsum_d q(x))\qquad\textrm{and}\qquad p(x-t)\sqsum_d q(x) = (p\sqsum_d q)(x-t).$$ 
Note that the identity element for $\sqsum_d$ is the polynomial $x^d$.

For a square $d\times d$ complex matrix $M$, we define 
\[
\charp{M}{x} \defeq \det (xI - M),
\]
its characteristic polynomial in the variable $x$.
We show that for monic polynomials, the operation $\sqsum_d$ can be realized as an expected characteristic polynomial
of a sum of random matrices.
\begin{theorem}\label{thm:symmetricAdditive} If $p(x)=\charp{A}{x}$ and $q(x)=\charp{B}{x}$ for $d\times d$ complex normal matrices $A,B$, then
	\begin{equation}\label{eqn:sqsummat}  p (x) \sqsum_{d} q (x) = \expec{Q} \charp{A + Q B Q^{*}}{x},\end{equation}
where the expectation is over a random unitary matrix $Q$ from the Haar measure on $U(d)$.
\end{theorem}
   In fact, by unitary invariance the right hand side depends only on 
   $\charp{A}{x}$ and $\charp{B}{x}$ and not on the further details of $A$ and
   $B$, so we may take them to be any normal matrices with the same characteristic polynomials.

The convolution \eqref{eqn:symmetricSum} was studied by Walsh~\cite{Walsh22}, who proved results
  including the following theorem (see also \cite{Marden} and \cite[Theorem 5.3.1]{rahman2002analytic}).
\begin{theorem}\label{thm:symmetricAdditiveRR}
If $p$ and $q$ are real rooted polynomials of degree $d$, then so is
  $p \sqsum_{d} q$.
Moreover, 
\[
  \maxroot{p \sqsum_{d} q} \leq \maxroot{p} + \maxroot{q}.
\]
\end{theorem}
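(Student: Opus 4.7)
The plan is to prove the two assertions in Theorem~\ref{thm:symmetricAdditiveRR} in succession, reducing the maximum-root bound to real-rootedness. Assume real-rootedness for the moment and let $\alpha = \maxroot{p}$ and $\beta = \maxroot{q}$. Since $A$ and $B$ are symmetric with characteristic polynomials $p$ and $q$, we have $A \pleq \alpha I$ and $B \pleq \beta I$. For every orthonormal $Q$ the matrix $A + QBQ^{T}$ is symmetric with spectrum bounded above by $\alpha + \beta$, so $\charp{A+QBQ^T}{x}$ is strictly positive on $(\alpha+\beta, \infty)$. Because $O(d)$ is compact and the characteristic polynomial depends continuously on $Q$, this strict positivity is preserved by integration against the Haar measure, so $(p \sqsum_d q)(x) > 0$ for every $x > \alpha + \beta$. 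Since $p \sqsum_d q$ is monic of degree $d$, once it is known to be real rooted this positivity forces all of its roots to lie in $(-\infty, \alpha + \beta]$.

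The substantive step is therefore real-rootedness. I would establish it by the method of interlacing families. Using the spectral decomposition $B = \sum_i \beta_i e_i e_i^T$ write
\[
QBQ^T \;=\; \sum_{i=1}^{d} \beta_i\, (Qe_i)(Qe_i)^T,
\]
and observe that when $Q$ is Haar-distributed, the tuple $(Qe_1, \ldots, Qe_d)$ is a uniformly random orthonormal frame, which may be sampled sequentially: $v_1$ uniform on the unit sphere, $v_2$ uniform on the unit sphere of $\Span{v_1}^\perp$, and so on. The Haar integral then decomposes as an iterated conditional expectation with one rank-one update per level. At each level, conditionally on all earlier $v_j$'s, the polynomial being averaged has the form $\expec{v}\charp{M + \beta_k vv^T}{x}$ for a fixed symmetric $M$, a fixed scalar $\beta_k$, and $v$ uniform on the unit sphere of a fixed subspace. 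Cauchy interlacing shows that the polynomials $\{\charp{M+\beta_k vv^T}{x}\}_v$ form an interlacing family (a common interlacer of degree $d-1$ can be read off from the characteristic polynomial of the restriction of $M$ to the orthogonal complement of $v$'s allowed subspace, which is a single polynomial independent of $v$). Applying the MSS theorem that averages over interlacing families are real rooted turns each conditional expectation into a real-rooted polynomial whose roots, in turn, interlace with those of a polynomial that does not depend on the remaining randomness. Iterating from the innermost level outward yields real-rootedness of $p \sqsum_d q$.

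The main obstacle is this inductive book-keeping: one has to verify that at every level of the recursion the polynomials appearing after partial averaging still belong to a common interlacing family, and that the orthogonality constraints coupling successive $v_j$'s do not spoil the argument. A clean alternative, closer to Walsh's original treatment, is to invoke the Grace--Walsh--Szeg\H{o} coincidence theorem directly on the explicit formula of Theorem~\ref{thm:symmetricAdditive}: the convolution is, up to the normalizing factor $1/d!$, the symmetrization
\[
\frac{1}{d!}\sum_{\sigma \in S_d} \prod_{i=1}^{d} \bigl(x - \alpha_i - \beta_{\sigma(i)}\bigr)
\]
of a symmetric multilinear form in $\beta_1, \ldots, \beta_d$, and Grace--Walsh--Szeg\H{o} applied to this multilinear form (with the $\beta_i$ real) gives real-rootedness in $x$ for free. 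Either route suffices; the advantage of the interlacing-families route is that it connects naturally with the random-matrix interpretation developed elsewhere in the paper.
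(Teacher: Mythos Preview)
The paper does not prove Theorem~\ref{thm:symmetricAdditiveRR} at all: it attributes the result to Walsh~\cite{Walsh22} and points to \cite{Marden} and \cite{rahman2002analytic} for proofs. So there is no ``paper's own proof'' to compare against; your proposal supplies what the paper chose to cite.

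Your argument for the maxroot bound is correct and clean: realize $p,q$ as characteristic polynomials of diagonal matrices, observe that each $\charp{A+QBQ^T}{x}$ is strictly positive on $(\alpha+\beta,\infty)$, and integrate. Given real-rootedness this immediately gives the bound.

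For real-rootedness, both routes you outline are valid, but your description of the interlacing-families route has a small glitch. You claim the common interlacer of the family $\{\charp{M+\beta_k vv^T}{x}\}_v$ is the characteristic polynomial of the restriction of $M$ to the orthogonal complement of $v$'s ``allowed subspace,'' a polynomial of degree $d-1$. That restriction has degree $d-\dim W$, not $d-1$, so past the first level it is too small to interlace. The correct common interlacer is simply $\charp{M}{x}$ itself: a rank-one perturbation (of either sign) produces eigenvalues that interlace those of $M$, so $\charp{M}{x}$ sits between any two members of the family in the interlacing order, and convex combinations (hence integrals) are real-rooted. With that fix the iterated-expectation argument goes through.

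Your second route is closer to how Walsh actually proved it. The permutation identity you wrote,
\[
p \sqsum_d q \;=\; \frac{1}{d!}\sum_{\sigma\in S_d}\prod_{i=1}^d (x-\alpha_i-\beta_{\sigma(i)}),
\]
is exactly what the paper's quadrature theorem (stated for the symmetric additive case immediately after Theorem~\ref{thm:recperms}) gives when $A,B$ are diagonal, and from here Grace--Walsh--Szeg\H{o} finishes the job. This is the more economical path if all you want is Theorem~\ref{thm:symmetricAdditiveRR}; the interlacing route, as you note, buys compatibility with the random-matrix viewpoint used elsewhere in the paper.
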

In Theorem~\ref{thm:sqsumTrans} we strengthen this bound on the maximum root.
Our result is much tighter in the case that most of the roots of $p$ and $q$
  are far from their maximum roots.

\subsubsection*{Symmetric Multiplicative Convolution}
\begin{definition} \label{def:sqmult}
For complex univariate polynomials
\begin{align*}
  p (x)  = \sum_{i=0}^{d} x^{d-i} (-1)^{i} a_{i} 
 \quad \text{ and} \quad 
  q (x)  = \sum_{i=0}^{d} x^{d-i} (-1)^{i} b_{i}
\end{align*}
of degree at most $d$, the {\em $d$th symmetric multiplicative convolution} of $p$ and $q$ is defined as:
	\begin{equation} \label{eqn:symmetricMult}
  p (x) \sqmult_{d} q (x) \defeq
  \sum_{i=0}^{d} x^{d-i} (-1)^{i} \frac{a_{i} b_{i}}{\binom{d}{i}}.
\end{equation}
\end{definition}
It is clear that $\sqmult_d$ is also bilinear, though it does not commute with differentiation or translation.

The following compact differential form of \eqref{eqn:symmetricMult}, analogous to \eqref{eqn:sqsumcompact}, was discovered by B. Mirabelli \cite{benno} who has kindly allowed us to include it here: if
$p(x)=P(xD)(x-1)^{d}$ and $q(x)=Q(xD)(x-1)^{d}$, then
\begin{equation}\label{eqn:sqmultcompact}
	(p\sqmult_d q)(x) = P(xD)Q(xD)(x-1)^d.
\end{equation}
Note that every polynomial of degree at most $d$ can be written as $P(xD)(x-1)^d$, though it is not as obvious as in the additive case.
The appearance of the polynomial $(x-1)^d$ is explained by the fact that it is the identity element for $\sqmult_d$, i.e., $p(x)\sqmult_d (x-1)^d=p(x)$
for every $p$ of degree at most $d$. 

We show that for monic polynomials the operation $\sqmult_d$ can be realized as an expected characteristic polynomial of a {\em product}
of random matrices.
\begin{theorem} \label{thm:symmetricMult}
	If $p(x)=\charp{A}{x}$ and $q(x)=\charp{B}{x}$ for $d\times d$ complex normal matrices $A,B$, then
	\begin{equation}\label{eqn:sqmultmat}
  p (x) \sqmult_{d} q (x) = \expec{Q} \charp{A Q B Q^{*}}{x},
\end{equation}
where the expectation is over a Haar unitary $Q$. 
\end{theorem}
\noindent The identity element $(x-1)^d$ thereby corresponds to taking $B=I$ in the above formula.

This convolution was studied by Szeg\"o \cite{Szego22}, who proved the following theorem.
\begin{theorem}\label{thm:symmetricMultRR}
If $p$ and $q$ have only nonnegative real roots, then the same is true of $p \sqmult_{d} q$.
Moreover,
\[
    \maxroot{p \sqmult_{d} q} \leq \maxroot{p} \maxroot{q}.
\]
\end{theorem}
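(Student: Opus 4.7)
The plan is to derive both claims from the random-matrix definition \eqref{def:sqmult}. Since $p$ and $q$ have only nonnegative real roots, the matrices $A$ and $B$ in that definition may be taken to be positive semidefinite. For any fixed orthogonal $Q$, the matrix $AQBQ^T$ is similar to the symmetric PSD matrix $A^{1/2}(QBQ^T)A^{1/2}$, so $\charp{AQBQ^T}{x}$ has only nonnegative real roots, and its largest root satisfies
\[
\lambda_{\max}\bigl(A^{1/2}(QBQ^T)A^{1/2}\bigr) \;\leq\; \lambda_{\max}(A)\,\lambda_{\max}(QBQ^T) \;=\; \lambda_{\max}(A)\,\lambda_{\max}(B) \;=\; \maxroot{p}\,\maxroot{q},
\]
uniformly in $Q$. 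Both conclusions of the theorem therefore hold pointwise on the orthogonal group; what remains is to show they survive taking the expectation.

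For this, I would invoke the interlacing-families framework: if the polynomials in a family are all real-rooted with roots in a common interval and possess a common interlacer, then every convex combination is also real-rooted, with largest root bounded by the supremum of the largest roots in the family. To bring $\{\charp{AQBQ^T}{x}\}_{Q \in O(d)}$ into this form, I would factor the Haar measure on $O(d)$ as an iterated average over one-parameter subgroups of Givens (planar) rotations, reducing the problem to a one-variable claim: for each pair of coordinate directions, the family
\[
\charp{A\,R(\theta)\,B\,R(\theta)^T}{x}, \qquad \theta \in [0,2\pi),
\]
is commonly interlaced. This one-variable interlacing can be established by tracking how the eigenvalues of $A^{1/2}R(\theta)BR(\theta)^T A^{1/2}$ move as $\theta$ varies; because $R(\theta)$ differs from the identity only in a two-dimensional block, the perturbation has rank two and the roots can be controlled via Cauchy interlacing.

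A second, more classical route is to start from the explicit coefficient formula of Theorem~\ref{thm:symmetricMult}. After rescaling coefficients by $\binom{d}{i}$, that formula realizes $p \sqmult_d q$ as the Hadamard (Schur) product of two polynomials with only nonnegative real roots, whereupon Szeg\H{o}'s classical composition theorem~\cite{Szego22} delivers both real-rootedness and the multiplicative root bound. I expect the main obstacle in the first approach to be justifying that the one-parameter interlacing property integrates cleanly over the full Haar measure on $O(d)$; in the second approach, the obstacle is chiefly bookkeeping, namely aligning sign and binomial-normalization conventions so as to invoke Szeg\H{o}'s theorem verbatim.
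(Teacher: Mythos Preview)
The paper does not supply its own proof of this theorem: it states the result and attributes it to Szeg\H{o}~\cite{Szego22}. Your second route---deriving the coefficient formula of Theorem~\ref{thm:symmetricMult} and then invoking Szeg\H{o}'s composition theorem---is therefore exactly what the paper does, so on that count there is nothing to compare.

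Your first route, via the random-matrix definition and interlacing families, is a genuinely different idea and is in the spirit of the quadrature arguments the paper uses for the \emph{additive} convolutions in Section~\ref{subsec:quadrature}. The pointwise part is fine: for every fixed orthogonal $Q$, the matrix $AQBQ^{T}$ is similar to the PSD matrix $A^{1/2}QBQ^{T}A^{1/2}$, so each $\charp{AQBQ^{T}}{x}$ is real-rooted with nonnegative roots and largest root at most $\maxroot{p}\maxroot{q}$. The gap is in the averaging step. To conclude that the Haar average is real-rooted you need a \emph{common interlacer} for the whole family $\{\charp{AQBQ^{T}}{x}\}_{Q}$ (equivalently, that every convex combination is real-rooted). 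Your suggestion is to obtain this from rank-two Cauchy interlacing along a Givens one-parameter subgroup, but Cauchy interlacing for a rank-two perturbation only gives that each eigenvalue moves within a window determined by the \emph{unperturbed} spectrum; it does not by itself produce a single polynomial that interlaces every $\charp{AR(\theta)BR(\theta)^{T}}{x}$ simultaneously, nor does it show that arbitrary convex combinations over $\theta$ stay real-rooted. A workable substitute is the mechanism of Lemmas~\ref{lem:rsttquad} and~\ref{lem:gt3}: show that $\expec{Q}\charp{AQR_{st}(\theta)BR_{st}(\theta)^{T}Q^{T}}{x}$ is actually \emph{independent} of $\theta$ (a quadrature identity), which bypasses the need for a common interlacer over $\theta$ and reduces the Haar integral to a finite average over signed permutations. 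If you want to pursue the random-matrix proof, that is the missing ingredient; as written, the ``rank-two Cauchy interlacing'' step does not close the argument.
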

We strengthen this result in Theorem~\ref{thm:recsumTrans}.

\subsubsection*{Asymmetric Additive Convolution}
\begin{definition}\label{def:recsum}
For complex univariate polynomials
\[
  p (x)  = \sum_{i=0}^{d} x^{d-i} (-1)^{i} a_{i},
\quad \text{and} \quad 
  q (x)  = \sum_{i=0}^{d} x^{d-i} (-1)^{i} b_{i},
\]
of degree at most $d$, we define the {\em $d$th asymmetric additive convolution} of $p$ and $q$ to be:
\begin{equation}\label{eqn:recsummat}
  p (x) \recsum_{d} q (x) = \sum_{k=0}^{d} x^{d-k} (-1)^{k} 
\sum_{i+j = k}
 \left(\frac{(d-i)! (d-j)!}{d! (d-k)!} \right)^{2}   a_{i} b_{j}.
\end{equation}
\end{definition}
This is equivalent to the expression
\[
  p (x) \recsum_{d} q (x) 
 =
  \left(\frac{1}{d!} \right)^{2} \sum_{i=0}^{d} ((d-i)!)^{2} b_{i} (\deriv x \deriv)^{i} p (x),
\]
which may also be written for $p(x)=P(DxD)x^{d}$ and $q(x)=Q(DxD)x^{d}$ as 
$$ p(x)\recsum_d q(x) = P(DxD)Q(DxD)x^{d},$$
the latter being an observation of \cite{benno}.

We are not aware of previous studies of this convolution. We show that if $p(x)$ and $q(x)$ are monic {\em with all roots real and nonnegative},
then $p \recsum_d q$ can be realized as an expected characteristic polynomial.
\begin{theorem}\label{thm:asymmetricAdditive}
For two monic polynomials $p(x)=\charp{AA^*}{x}$ and $q(x)=\charp{BB^*}{x}$ with $A,B$ any $d\times d$ square complex matrices,
\begin{equation}
	p (x) \recsum_{d} q (x) = \expec{R,Q} \charp{(A + R B Q) (A + R B Q)^{*}}{x}.
\end{equation}
where the expectation is taken over independent Haar unitaries $R$ and $Q$.
\end{theorem}
While it is not immediately obvious, we show in Theorem \ref{thm:asymmetricAdditiveCoeff}
that the right hand side depends only on $p(x)$ and $q(x)$.
We prove the following real-rootedness theorem in Section \ref{sec:rr}.
\begin{theorem}\label{thm:asymAdditiveRR}
If $p$ and $q$ have only nonnegative real roots, then the same is true of
  $p \recsum_{d} q$.
\end{theorem}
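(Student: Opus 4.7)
My plan is to treat $p \recsum_d q$ as the expected characteristic polynomial of the random positive semidefinite matrix $(A+RBQ)(A+RBQ)^T$ and to use the interlacing-families machinery to promote the sample-wise nonnegative real-rootedness of each $\charp{(A+RBQ)(A+RBQ)^T}{x}$ to the expectation. Since $(A+RBQ)(A+RBQ)^T$ is PSD for every choice of orthogonal $R, Q$, each individual polynomial already has only nonnegative real roots; only the averaging step is at issue.

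First I would pass to a $2d \times 2d$ symmetric dilation. Set
\[
M_A = \begin{pmatrix} 0 & A \\ A^T & 0\end{pmatrix}, \qquad
M_B = \begin{pmatrix} 0 & B \\ B^T & 0\end{pmatrix}, \qquad
\tilde U = \begin{pmatrix} R & 0 \\ 0 & Q^T\end{pmatrix}.
\]
A direct block computation gives
\[
M_A + \tilde U M_B \tilde U^T \;=\; \begin{pmatrix} 0 & A+RBQ \\ (A+RBQ)^T & 0\end{pmatrix},
\]
whose characteristic polynomial in $y$ equals $\charp{(A+RBQ)(A+RBQ)^T}{y^2}$. Therefore $(p \recsum_d q)(y^2) = \E_{R,Q}\charp{M_A + \tilde U M_B \tilde U^T}{y}$, and it suffices to show that this expected symmetric characteristic polynomial is real-rooted in $y$. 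Because each sample matrix is a symmetric block-off-diagonal matrix, its spectrum is symmetric about $0$; so the real roots of the expectation occur in $\pm$ pairs, and substituting $x = y^2$ recovers nonnegative real-rootedness in $x$.

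The second step constructs an interlacing family in the sense of Marcus--Spielman--Srivastava. I would decompose each of $R$ and $Q$ into a sequence of elementary orthogonal factors (for instance Givens/Jacobi rotations in randomly chosen coordinate planes, or random reflections) in such a way that uniform averaging over the discrete ensemble reproduces the Haar expectation of $\charp{M_A + \tilde U M_B \tilde U^T}{y}$. For any fixed prefix of elementary factors, varying the next atomic factor changes $M_A + \tilde U M_B \tilde U^T$ by a symmetric perturbation of bounded rank, so Cauchy interlacing produces a common real-rooted interlacer for the resulting conditional-expectation polynomials. Iterating this common-interlacer property up the tree of conditional expectations, the MSS framework concludes that the root of the tree, $\E_{R,Q}\charp{M_A + \tilde U M_B \tilde U^T}{y}$, is real-rooted, with roots contained in the union of root intervals of the samples---in particular, symmetric around $0$.

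The main obstacle is the asymmetric action in the interlacing step. Unlike the setting of Theorem~\ref{thm:symmetricAdditiveRR}, the dilated orthogonal $\tilde U$ is constrained to the block-diagonal subgroup $O(d) \times O(d) \subset O(2d)$, so Walsh's result for $\sqsum_{2d}$ cannot be invoked directly; indeed one can check in small examples that the block-diagonal expectation does not equal the full-Haar expectation. Consequently the common-interlacer verification must handle elementary factors of $R$ (which perturb the first block row of $A + RBQ$) separately from those of $Q$ (which perturb the first block column), and it must track that the block-off-diagonal form of $M_A, M_B$---hence the $\pm$-symmetry of each sample's spectrum---is preserved throughout. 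This bookkeeping, combined with alternating the conditioning between $R$- and $Q$-factors, is where the core work of the proof lies.
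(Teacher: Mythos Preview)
Your dilation reduction is correct and is in fact also used in the paper (Section~\ref{subsec:quadrature}), but from there the paper takes a completely different route: it works with the explicit coefficient formula \eqref{eqn:asymSum} and proves real-rootedness via Hurwitz stability. Concretely, it polarizes $p(xy)$ and $(xy)^d q(1/xy)$ into multilinear polynomials in $2d$ variables, applies the Lieb--Sokal theorem (a Hurwitz-stable differential operator applied to a Hurwitz-stable polynomial stays Hurwitz stable), and then specializes back to two variables to obtain a polynomial $h(x,y)=r(xy)$ whose stability forces $r\in\RRneg$. No interlacing-family argument is used.

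Your interlacing-families plan, as written, has a genuine gap at the step you yourself flag as ``the core work.'' You assert that because varying an atomic factor perturbs $M_A+\tilde U M_B \tilde U^T$ by bounded rank, ``Cauchy interlacing produces a common real-rooted interlacer for the resulting conditional-expectation polynomials.'' Cauchy interlacing for a rank-$k$ perturbation gives only $k$-fold interlacing between two individual characteristic polynomials; it does \emph{not} produce a common interlacer for a whole (discrete or continuous) family, and it says nothing about averages. In the MSS framework the common-interlacer property is obtained precisely because the branchings are rank-one: for a rank-one symmetric update every convex combination $t\,\charp{M}{x}+(1-t)\,\charp{M+vv^T}{x}$ is itself the characteristic polynomial of a symmetric matrix along the path, hence real-rooted. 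A Givens rotation in $R$ (or $Q$) conjugated through $M_B$ is a rank-two (in the dilation, rank up to four) perturbation varying with an angle $\theta$, and there is no analogous ``every convex combination is a characteristic polynomial'' statement available. So the inductive step that would carry real-rootedness from the leaves to the root is not justified.

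There is also a second unproved ingredient: you need a decomposition of Haar measure on $O(d)\times O(d)$ into a finite product of atomic distributions whose average reproduces the expected characteristic polynomial at every node of the tree. Such quadrature statements are delicate; the paper proves one (Theorem~\ref{thm:recperms}, that signed permutations suffice) by a separate argument using the trigonometric structure of $\rstt$, not by general principles. Without either a quadrature result or a direct common-interlacing argument for the continuous parameterization, the interlacing-family machinery cannot be started.
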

We obtain bounds on the roots of $p\recsum_d q$ in Theorem \ref{thm:rectConvBound}.

\begin{remark} In the first version of this paper \cite{FFCold}, we {\em defined} the operations $\sqsum_d, \sqmult_d,$ and $\recsum_d$ in terms
	of random matrices using the formulas \eqref{eqn:sqsummat}, \eqref{eqn:sqmultmat}, \eqref{eqn:recsummat}, and stated the 
	formulas appearing in Definitions \ref{def:sqsum}, \ref{def:sqmult}, and \ref{def:recsum} as theorems by showing that they
	were equivalent. We have chosen to reverse this presentation since $\sqsum_d,\sqmult_d$ were in fact already defined by Walsh
	and Szeg\"o, albeit in a different context, and their basic properties such as bilinearity are more immediate from the purely algebraic definitions.
\end{remark}
\subsection{Motivation and Related Results}
Before describing our analytic results on the locations of roots of the convolutions in the next section, we
explain the motivations for studying them in the context of several other areas of mathematics.

\subsubsection*{Free Probability} Free probability theory (see e.g.
\cite{voiculescu1997free,nica2006lectures,anderson2010introduction}) studies among other things
  the large $d$ limits of random matrices such as those in \eqref{def:sqsum}, \eqref{def:sqmult}, and \eqref{def:recsum}.
In particular, it allows one to calculate the limiting spectral distribution of a 
  sum or product of two unitarily invariant random matrix ensembles in terms of the limiting spectral
  distributions of the individual ensembles.
For both the sum and the product, free probability provides a transform of the moments of the spectra of the individual
  matrices from which one can easily derive the transform of the moments of the limiting spectra of the resulting matrices --- these
  are known as the $R-$ and $S-$transforms, which may be viewed as generating functions for certain polynomials in the 
  moments (known as free cumulants) which are linear in the convolutions.
We show that for our ``finite free convolutions'' the same transforms provide {\em upper bounds} on the roots of the corresponding
  expected polynomials in {\em finite dimensions}.

Following the definitions in this paper, \cite{adam,arizmendi} have shown that by taking appropriate
  limits our finite free convolutions yield the standard free convolutions in free probability theory. 
Thus, expected characteristic polynomials provide an alternative ``discretization'' of free convolutions from the typical
  one involving random matrices.

The paper \cite{gorin} shows that the real zeros of $p\sqsum_d q$ and $p\sqmult_d q$ may be interpreted as the $\beta\rightarrow \infty$ limit of 
certain generalizations of $\beta-$ensembles in random matrix theory. 

\subsubsection*{Combinatorics} The original motivation for this work is the method of interlacing families of polynomials
  \cite{IF1,IF2,IF4}, which reduces various combinatorial problems concerning eigenvalues
  to problems of bounding the roots of the expected characteristic polynomials of certain random matrices.
In particular, the paper \cite{IF4} studies bipartite random regular graphs, whose expected characteristic polynomials
  turn out to be of the type appearing in \eqref{def:recsum}. The bound of Theorem \ref{thm:multTransBound} on the roots of these polynomials then implies the existence of bipartite Ramanujan graphs of every size and every degree (a result that was later turned
into a polynomial time algorithm by Cohen \cite{cohen}).  Hall, Puder, and Sawin \cite{HPS} used some of the techniques in this paper
to prove the related result that every bipartite graph has a $k-$cover which is Ramanujan, for every $k\ge 2$, generalizing \cite{IF1}.

\subsubsection*{\bf Representation Theory} As shown in Section \ref{sec:formulas},
the unitary group may be replaced by the orthogonal group or the group of
signed permutations in Theorems
\ref{thm:symmetricAdditive}, \ref{thm:symmetricMult}, and \ref{thm:asymmetricAdditive}
without changing the expected characteristic polynomial and therefore any of
their statements.  The ability to compute the average of a matrix function over
the group of unitary matrices by instead computing an average over some
smaller group of matrices is a phenomenon we refer to as {\em quadrature} (see \cite{IF4} for more details).

The proofs given Section \ref{sec:formulas}  are different from those that
appeared in the first version \cite{FFCold} of this paper.  The original proofs treat each
of the convolutions as (specifically) being an average over the unitary group,
and then by explicit calculations show that one can (in some cases) replace
the unitary group with the signed permutation matrices and get the same result.
After posting that preprint, we were informed that our quadrature results
were actually a manifestation of well-studied concepts in representation
theory (the Peter-Weyl theorem) and further work has characterized all
subgroups of the unitary group that have this property \cite{HPS}.

One subgroup that has been used specifically in an application is the $n\times
n$ matrices corresponding to the standard representation of $S_{n+1}$ (the
symmetric group on $n+1$ elements).  The fact that this group is a valid
quadrature group plays a pivotal role in the results on Ramanujan graphs
\cite{IF4} and \cite{HPS} mentioned above.

\subsubsection*{\bf Geometry of Polynomials} Theorem \ref{thm:symmetricAdditiveRR} implies that if $p(x)$ is real-rooted of degree $d$, then
the linear transformation $p\sqsum_d (\cdot)$ preserves real-rootedness of all real polynomials of degree at most $d$. 
Leake and Ryder \cite{leakeryder} observe that a partial converse is also true: every differential operator $T:\R_{\le d}[x]\rightarrow \R_{\le d}[x]$ 
preserving real-rootedness can be written as $T(q)=p\sqsum_d q$ for some $p$ of degree at most $d$. Thus, our bounds on
the extreme roots of the additive convolution imply bounds on how much any such operator can perturb the roots of its real rooted inputs. They also generalize our analytic bounds on the roots of symmetric additive convolutions (Theorem \ref{thm:sqsumTrans})
  by showing that they are a special case of submodularity relation.

\subsubsection*{\bf Random Matrix Theory}
The expected characteristic polynomials of symmetric Gaussian random matrices are Hermite polynomials (see, e.g. \cite[Theorem 4.1]{dumitriu2002matrix}).
If $R$ is an $d$-by-$d$ matrix of independent Gaussian random variables of variance $1$, then
\[
	\E \charp{\frac{R + R^T}{\sqrt{2}}}{x} = H_d (x),  
\]
where 
\[
  H_d (x) = e^{-D^2 / 2} x^d
\]
is the $d$th Hermite polynomial.
By applying Theorem \ref{thm:symmetricAdditive}, but taking the expectation over orthogonal matrices, or by applying the formula \eqref{eqn:sqsumcompact},
we may conclude that for positive $a$ and $b$ and $c = \sqrt{a^2 + b^2}$,
\[
  a^d H_d (x / a) \sqsum_d b^d H_d (x / b) = c^d H_d (x/c).
\]

Similarly, the expected characteristic polynomial of $R R^T$ is the $d$th Laguerre polynomial \cite[Section 9]{edelman1988eigenvalues}
\[
L_d (x) = \left(1 - D \right)^d x^d.  
\]
Thus, both Theorem \ref{thm:asymmetricAdditive} and the definition \eqref{eqn:recsummat} can be used to show that for postive $a$ and $b$ and $c = a + b$,
\[
  a^d L_d (x/a) \recsum_{d}  b^d L_d (x/b) = c^d L_d (x/c).
\]

\subsection{Transforms and Root Bounds}\label{sec:cauchy}
In free probability, each of the three convolutions comes equipped with a
  natural transform of probability measures.
We define analogous transforms on polynomials and use them to bound the extreme roots
  of the convolutions of polynomials.

We will identify a vector $(\lambda_{1}, \dotsc , \lambda_{d})$
  with the discrete distribution that takes each value $\lambda_{i}$
  with probability $1/d$.
The Cauchy/Hilbert/Stieltjes transform of such a distribution is the function
\[
  \cauchy{\lambda}{x} = \frac{1}{d} \sum_{i=1}^{d} \frac{1}{x-\lambda_{i}}.
\]
Given a polynomial $p$ with roots $\lambda_{1}, \dotsc , \lambda_{d}$,
  we similarly define
\[
  \cauchy{p}{x} := \cauchy{\lambda}{x}.
\]
We will prove theorems about the inverse  of the Cauchy transform, which we define for real $w>0$ by
\[
  \invcauchy{p}{w} := \max \setof{x : \cauchy{p}{x} = w}.
\]
For a real rooted polynomial $p$, and thus for real $\lambda_{1}, \dotsc , \lambda_{d}$,
 this is the value of $x$ that is larger than all the $\lambda_{i}$ for which $\cauchy{p}{x} = w$. Since
 $\cauchy{p}{x}$ is decreasing above the largest root of $p$, the maximum is uniquely attained
 for each $w>0$.

As $\cauchy{p}{x} = \frac{1}{d} \frac{p' (x)}{p (x)}$,
\[
\cauchy{p}{x} = w
\quad \iff \quad 
p (x) - \frac{1}{w d} p' (x) = 0.
\]
This tells us that 
\[
  \invcauchy{p}{w} = \maxroot{\shiftoper{1/wd} p},
\]
where we define
\[
  \shiftoper{\alpha} p (x) :=  p (x) - \alpha \deriv p (x).
\]

Voiculescu's R-transform of the probability distribution that is uniform on $\lambda$
  is given by $\rtrans{\lambda }{w} = \invcauchy{\lambda }{w} - 1/w$ (though in free
  probability the inversion is typically done at the level of power series).
We use the same notation to define a transform on polynomials
\[
  \rtrans{p}{w} := \invcauchy{p}{w} - 1/w.
\]

If $\lambda$ and $\mu$ are compactly supported probability distributions on the reals, then their
  free convolution $\lambda \sqsum \mu $ satisfies \cite{voiculescu1997free}:
\[
  \rtrans{\lambda \sqsum \mu}{w} = \rtrans{\lambda}{w} + \rtrans{\mu}{w}.
\]

For our finite additive convolution, we obtain an analogous {\em inequality} for $w>0$.
\begin{theorem}{thm:sqsumTrans}
 For $w > 0$ and real-rooted polynomials $p$ and $q$ of degree $d$,
 \begin{equation}\label{eqn:sqsumTrans}
  \rtrans{p \sqsum_{d} q}{w} \leq  \rtrans{p}{w} + \rtrans{q}{w},
\end{equation}
\end{theorem}
with equality if and only if $p(x)$ or $q(x)$ has the form $(x-\lambda)^{d}$.
We will often write \eqref{eqn:sqsumTrans} as:
 \[
    \maxroot{\shiftoper{\alpha} (p \sqsum_{d} q)} + d \alpha
    \leq 
    \maxroot{\shiftoper{\alpha} p}
    +
    \maxroot{\shiftoper{\alpha} q},
  \]
where $\alpha = 1/wd$.

To bound the roots of the finite multiplicative convolution, we employ a variant of Voiculescu's $S-$transform.
We first define a variant of the moment transform, which we write as a power series in $1/z$ instead of in $z$:
\[
  \mtrans{p}{z} := z \cauchy{p}{z} - 1
=
  \frac{1}{d} \sum_{i=1}^{d} \sum_{j \geq 1} \left(\frac{\lambda_{i}}{z} \right)^{j}.
\]
For a polynomial $p$ having only nonnegative real roots and a $z > 0$,
\[
z > \maxroot{p} \quad \iff \quad \mtrans{p}{z} < \infty .
\]
We define the inverse of this transform, $\invmtrans{p}{w}$, to be the largest
  $z$ so that $\mtrans{p}{z} = w$, and 
\[
\strans{p}{w}
 = 
\frac{w}{w+1} \invmtrans{p}{w}.
\]

This is the reciprocal of the usual $S$-transform.
We prove the following bound on this transformation in Section~\ref{sec:multTrans}.

\begin{theorem}{thm:multTransBound}
For degree $d$ polynomials $p$ and $q$ having only nonnegative real roots and $w > 0$,
\[
  \strans{p \sqmult_{d} q}{w} \leq \strans{p}{w} \strans{q}{w},
\]
with equality only when $p$ or $q$ has only one distinct root.
\end{theorem}

To define the relevant transform for the asymmetric additive convolution,
  we define $\subsquare$ to be the linear map taking a polynomial $p (x)$
  to $p (x^{2})$.
If $p$ has only positive real roots $\lambda_{i}$, then $\subsquare p$ has roots $\pm \sqrt{\lambda_{i}}$.
If $\lambda$ is a probability distribution supported on the nonnegative reals,
  then we use $\subsquare \lambda$ to denote the corresponding symmetric probability distribution on $\pm \sqrt{\lambda_{i}}$.

If $\lambda$ and $\mu$ are compactly supported probability distributions on the positive reals, then
Benaych-Georges \cite{benaych2009rectangular} showed that their appropriately defined rectangular convolution 
$\lambda \recsum \mu$ satisfies:
\[
  \rtrans{\subsquare \lambda \recsum \subsquare \mu}{w} = \rtrans{\subsquare \lambda}{w} + \rtrans{\subsquare \mu}{w}.
\]

In Section~\ref{sec:asymTrans} we derive an analog of this result in the form of the following inequality. 

\begin{theorem}\label{thm:recsumTrans}
For degree $d$ polynomials $p$ and $q$ having only nonnegative real roots and $w>0$,
\[
	\rtrans{\subsquare (p \recsum_{d} q)}{w} \leq  \rtrans{\subsquare p}{w} + \rtrans{\subsquare q}{w}.
\]
\end{theorem}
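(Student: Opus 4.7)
The plan is to parallel the proof of Theorem~\ref{thm:sqsumTrans} using the standard $d \to 2d$ dilation. For $d \times d$ matrices $A, B$, form the symmetric $2d \times 2d$ matrices $M_A = \left(\begin{smallmatrix} 0 & A \\ A^T & 0 \end{smallmatrix}\right)$ and $M_B = \left(\begin{smallmatrix} 0 & B \\ B^T & 0 \end{smallmatrix}\right)$; then $\charp{M_A}{x} = \subsquare p(x)$ and $\charp{M_B}{x} = \subsquare q(x)$, and with $W = \left(\begin{smallmatrix} R & 0 \\ 0 & Q^T \end{smallmatrix}\right)$ for independent Haar $R, Q \in O(d)$, the matrix $M_A + W M_B W^T$ is exactly the dilation of $A + RBQ$, so
\[
  \subsquare(p \recsum_d q)(x) = \E_{R, Q} \charp{M_A + W M_B W^T}{x}.
\]
A tempting first attempt would be to identify this with $\subsquare p \sqsum_{2d} \tilde q$ for some $\tilde q$ of degree $2d$ depending only on $q$, which would reduce the claim to Theorem~\ref{thm:sqsumTrans} in dimension $2d$. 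However, a direct coefficient comparison using the formula of Theorem~\ref{thm:asymmetricAdditive} shows that no such $\tilde q$ exists: already for $d=2$, $\subsquare(p \recsum_d q)$ and $\subsquare p \sqsum_{2d} \subsquare q$ differ, because averaging $W$ over the block-diagonal subgroup $O(d) \times O(d) \subset O(2d)$ produces a polynomial genuinely different from the full $O(2d)$ Haar average.

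The approach I would therefore pursue is to adapt the interlacing-family / barrier-function machinery underlying Theorem~\ref{thm:sqsumTrans} to this block-diagonal averaging. View $\E_{R,Q}$ as a sequence of $d$ paired conditional expectations, the $i$th of which reveals the columns $r_i$ of $R$ and $q_i$ of $Q$ and contributes a rank-two symmetric perturbation $\left(\begin{smallmatrix} 0 & r_i q_i^T \\ q_i r_i^T & 0 \end{smallmatrix}\right)$ to the running partial sum. The goals are to show that the resulting conditional expected characteristic polynomials form an interlacing family (so that a barrier argument applies) and that each paired update moves the inverse Cauchy transform $\invcauchy{\cdot}{w}$ of the running polynomial by an amount whose total across the $d$ steps is at most $\invcauchy{\subsquare q}{w} - 1/w$. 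Combined with the initial value $\invcauchy{\subsquare p}{w}$, this yields $\invcauchy{\subsquare(p \recsum_d q)}{w} \leq \invcauchy{\subsquare p}{w} + \invcauchy{\subsquare q}{w} - 1/w$, which rearranges to the stated R-transform inequality.

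The hard part will be the per-step barrier calculation: the rank-two perturbations here have a specific off-diagonal (dilation) structure, so the standard rank-one argument used for $\sqsum_{2d}$ must be replaced by a rank-two analogue tailored to this structure. One must verify that the increment of $\invcauchy{\cdot}{w}$ after taking expectation over $(r_i, q_i)$ is at most the $i$th rank-one increment that would appear in a direct barrier argument for $\invcauchy{\subsquare q}{w}$, and that the normalizations (degrees $d$ vs.\ $2d$ and ranks two vs.\ one) track consistently so that the $d$ paired contributions assemble into the total $\invcauchy{\subsquare q}{w} - 1/w$ deficit.
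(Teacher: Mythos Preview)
Your proposal takes a route entirely different from the paper's, and the route you sketch has a genuine gap at its core.

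First, a misconception: the ``interlacing-family / barrier-function machinery underlying Theorem~\ref{thm:sqsumTrans}'' is not how Theorem~\ref{thm:sqsumTrans} is proved in this paper. Both Theorem~\ref{thm:sqsumTrans} and Theorem~\ref{thm:recsumTrans} are proved by the \emph{pinching} technique of Lemma~\ref{lem:pinch}: one fixes $q$, minimizes the deficit $\phi(p)$ over polynomials $p$ with roots in a compact box, and uses pinching to show the minimizer must have a single repeated root. For Theorem~\ref{thm:recsumTrans} (restated as Theorem~\ref{thm:rectConvBound}) this reduces the problem to the base case $p=(x-\lambda)^d$, $q=(x-\mu)^d$, which is then handled by an explicit Chebyshev-polynomial identity (Section~\ref{sec:cheby}, Lemmas~\ref{lem:p1q1} and~\ref{lem:rectToCheby}). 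There is no per-step rank-update barrier argument anywhere in the proof.

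Second, the gap in your plan. You correctly observe that $\subsquare(p\recsum_d q)$ is \emph{not} $\subsquare p \sqsum_{2d} \subsquare q$, so no black-box reduction is available. You then propose to reveal the columns $(r_i,q_i)$ of $R,Q$ one pair at a time and bound the increment of $\invcauchy{\cdot}{w}$ at each step, asserting that the $d$ increments ``assemble into the total $\invcauchy{\subsquare q}{w}-1/w$ deficit.'' But this assembly is exactly the content of the theorem, and you give no mechanism for it. Barrier arguments of the BSS/IF2 type bound each step's increment by quantities like $\|v_i\|^2$ or traces; they do not naturally produce the inverse Cauchy transform of a second polynomial as a budget. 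Getting $\invcauchy{\subsquare q}{w}$ to appear requires precisely the kind of reduction-to-extremal-$q$ that the paper carries out via pinching and the Chebyshev computation. Your outline names this as ``the hard part'' and leaves it open; without it, nothing has been shown.

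A smaller issue: the columns of a Haar-orthogonal matrix are not independent, so your sequential conditioning on $(r_i,q_i)$ would need to be made precise (e.g.\ via the quadrature over signed permutations in Section~\ref{subsec:quadrature}), but even once that is done the budget problem above remains.
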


\begin{remark} The formulas above are stated only for polyomials of degree exactly $d$, but they may be
	applied to polynomials of degree at most $d$ by first applying the degree-reduction formulas outlined
in the next section.\end{remark}

\subsection{Polynomials of Different Degrees}\label{sec:diffdegs}
The operation $p\sqsum_d q$ is defined above for pairs of
polynomials $p$ and $q$ of degree at most $d$, but if one or both of the
polynomials has degree $c<d$, it may be written in terms of the lower degree operation $\sqsum_c$.
\begin{lemma}[Degree Reduction for $\sqsum_d$]\label{lem:reduce_degree} Suppose $\deg(p)\le d$ and $\deg(q)\le d-1$. Then:
	\begin{equation} p(x)\sqsum_d q(x) = (D/d)p(x)\sqsum_{d-1} q(x)\end{equation}
\end{lemma}
\begin{proof} By \eqref{eqn:sqsumcompact}, the differential operator $p(x)\sqsum_d (\cdot):\R_{d-1}[x]\rightarrow\R_{d-1}[x]$ is equal to $\hat{p}(D)$ where $\hat{p}(D)x^d=p(x)$. 
	Differentiating, we have 
	$$\hat{p}(D)x^{d-1}=(D/d)\hat{p}(D)x^d = (Dp/d)(x),$$
	so we must have $p(x)\sqsum_d q(x) = (Dp/d)(x)\sqsum_{d-1} q(x)$ whenever $\deg(q)\le d-1$.
\end{proof}
This relationship between convolutions of different grades turns out to be a key
tool in the analytic proofs in Section \ref{sec:bounds}, which induct on the degrees of the polynomials.
We prove similar degree reduction formulas for $\sqmult_d$ and $\recsum_d$ in Lemmas \ref{lem:multdegm1} and \ref{lem:rectConvDiffDegs},
but have chosen to present them in context along with the inductive proofs in Section \ref{sec:bounds}. 

It turns out that the operation $\sqsum_d$ can be naturally defined for
polynomials of degree strictly greater than $d$ via the random matrix identity \eqref{eqn:sqsummat}. The key observation is that the expected
characteristic polynomial of a random restriction of a $d\times d$ matrix
is proportional to a derivative of its characteristic polynomial.

\begin{lemma}\label{lem:restrict} If $a > d$, $A$ is an $a\times a$ matrix and $Q$ is a random $d\times a$ complex matrix with orthonormal rows (i.e., sampled from
	the Haar measure on the complex $d\times a$ Stiefel manifold $\csteifel{d}{a}$), then
$$ 
	\E_{Q} \charp{Q A Q^{*}}{x} = \frac{d!}{a!}D^{a-d}q(x).
$$
\end{lemma}
We prove this lemma in Section \ref{sec:formulas}.
Lemma \ref{lem:restrict} yields the following corollary, which may be viewed as a generalization of the definition of $\sqsum_d$ to polynomials of degree greater than $d$.
\begin{corollary}
	If $A$ and $B$ are $a\times a$ and $b\times b$ Hermitian matrices with $a,b\ge d$ and characteristic polynomials $p(x)$ and $q(x)$ respectively, and $R, Q$ are uniformly random from $\csteifel{d}{a}$ and $\csteifel{d}{b}$ respectively, then
	\begin{equation}\label{eqn:sqsumcd}
		p(x)\sqsum_d q(x) \defeq \E_{Q,R} \charp{RAR^*+QBQ^*}{x} = \frac{d!}{a!} D^{a-d}p(x)\sqsum_d \frac{d!}{b!}D^{b-d}q(x).
	\end{equation}
\end{corollary}
\begin{proof}
Since the formula \eqref{eqn:symmetricSum} is bilinear in the characteristic polynomials $\charp{A}{x}$ and $\charp{B}{x}$, we have for fixed $R$:
\begin{align*}
\E_{Q \in \csteifel{d}{b}} \charp{RAR^*+Q B Q^{*}}{x} 
	&=  \E_{Q \in \csteifel{d}{b}} \E_{U\in \csteifel{d}{d}} \charp{RAR^*+UQ B Q^{*}U^*}{x} \quad\textrm{by left invariance of $Q$} \\
	&= \E_{Q \in \csteifel{d}{b}} \charp{RAR^*}{x}\sqsum_{d} \charp{QBQ^*}{x}\quad\textrm{by \eqref{eqn:symmetricSum}}\\
	&=  \charp{RAR^*}{x}\sqsum_{d} \E_{Q \in \csteifel{d}{b}}\charp{QBQ^*}{x}\quad\textrm{by bilinearity of $\sqsum_{d}$}\\
	&= \charp{RAR^*}{x}\sqsum_{d} \frac{c!}{d!}D^{b-d}\charp{B}{x}\quad\textrm{by Lemma \ref{lem:restrict}.}
\end{align*}
	Averaging over $R$ and invoking bilinearity and Lemma \ref{lem:restrict} once more finishes the proof.
\end{proof}
Note that the definition \eqref{eqn:sqsumcd} is consistent with Lemma \ref{lem:reduce_degree}, e.g. if $b=d$ then the right hand side of \eqref{eqn:sqsumcd} is equal to
$p(x)\sqsum_a q(x)$.
As differentiation preserves real-rootedness, the generalized definition of $\sqsum_d$ preserves real-rootedness of polynomials of all degrees, and
bounds on their roots may be obtained from our results by reducing to the equal degree case by differentiating sufficiently many times.

While Lemma \ref{lem:restrict} can be used in conjunction with $\sqmult_d$ and
$\recsum_d$ just as easily due to their bilinearity, this does not correspond
to the appropriate degree-reduction operators (see Lemmas \ref{lem:multdegm1} and
\ref{lem:rectConvDiffDegs}) for those cases, so it does not lead to a
satisfying generalization of the definitions to higher degrees.
\subsection{Notation and Organization}
Let $\rrpoly{d}$ be the family of real rooted polynomials of degree exactly $d$ with positive leading coefficient,
  and let $\RRpoly$ be the union over $d$ of $\rrpoly{d}$.
Let $\rrpos{d}$ be the subset of these polynomials having only nonnegative roots.
We let $\RRpos $ be the union of $\rrpos{d}$ over all $d \geq 1$.
We also define $\rrneg{d}$ and $\RRneg$ to be the set of polynomials having only nonpositive roots.

For a function $f (x)$, we write the derivative as $\deriv f (x)$.
For a number $\alpha$, we let $\shiftoper{\alpha}$ be the operator that maps
  $f$ to $f - \alpha D f$.
That is, $\shiftoper{\alpha}$ is multiplication by $1 - \alpha D$.

\renewcommand{\mm}{m}
\renewcommand{\nn}{n}

\renewcommand{\aa}{d}
\renewcommand{\bb}{d}
\renewcommand{\cc}{d}
\renewcommand{\dd}{d}

\newcommand{\bin}{ \frac{1}{\binom{ \max \{ \mm, \nn \} }{k}}}
\newcommand{\expect}[2]{\mathbb{E}_{#1} \left\{\, #2 \,\right\}}
\newcommand{\mydel}[1]{\delta_{ \{ #1 \} } }
\newcommand{\minor}[3]{[#1]_{#2, #3}}

\section{Equivalence of Convolutions and $\E\chi$}\label{sec:formulas}

The goal of this section is to prove Theorems \ref{thm:symmetricAdditive},
\ref{thm:symmetricMult}, and \ref{thm:asymmetricAdditive} relating the three
convolutions to random matrices.  While we have so far only considered averages
over unitary matrices, it turns out that one can average over various other
collections of matrices and get the same formula.  In
Section~\ref{sec:minor-orthogonality}, we will define a property that we call
{\em minor-orthogonality} and then in Section~\ref{sec:formulas} we will show
that the coefficients we are interested can be computed using an average over
any collection of minor-orthogonal matrices.  Also in
Section~\ref{sec:minor-orthogonality}, we will show that the collection of $n
\times n$ signed permutation matrices (under a uniform distribution) is
minor-orthogonal, and then we will use this to show that the orthogonal
matrices (under the Haar measure) is minor-orthogonal.

There are some advantages to being able to express the convolutions as averages
over different collections of matrices; in particular, a formula that is easily
derived by replacing a unitary average by one over signed permutation matrices
will be used in the proof of Lemma~\ref{lem:legendre}.

\subsection{Minor-Orthogonality}\label{sec:minor-orthogonality} We will write
$[n]$ to denote the set $\{ 1, \dots, n \}$ and for a set $S$, we write
$\binom{S}{k}$ to denote the collection of subsets of $S$ that have exactly $k$
elements.  When our sets contain integers (which they always will), we will
consider the set to be ordered from smallest to largest.  Hence, for example,
if $S$ contains the elements $\{ 2, 5, 3 \}$, then we will write 
\[
S = \{ s_1, s_2, s_3 \} 
\quad
\text{where}
\quad
s_1 = 2, s_2 = 3, s_3 = 5.
\]
Now let $S = \{ s_1, \dots, s_k \} \in \binom{[n]}{k}$.
For a set $W \in \binom{[k]}{j}$ with $j \leq k$, we will write
\[
W(S) = \{ s_i : i \in W \}.
\]
Lastly, for a set of integers $S$, we will write 
\[
\| S \|_1 = \sum_{s \in S} s.
\]

\begin{example}
For $W = \{ 1, 3 \}$ and $S = \{ 2, 4, 5 \}$ we have
\[
W(S) = \{ 2, 5 \}
\AND
\|W\|_1 = 1 + 3 = 4
\AND
\|S\|_1 = 2 + 4 + 5 = 11.
\]
\end{example}

Let $\mm, \nn$ be positive integers.
Given an $\mm \times \nn$ matrix $A$ and sets $S \subseteq [\mm], T \subseteq [\nn]$ with $|S| = |T|$, we will write the {\em $(S, T)$-minor of $A$} as
\[
\minor{A}{S}{T} = \mydet{ \{ a_{i, j} \}_{i \in S, j \in T}}
\]
By definition, we will set $[A]_{\varnothing, \varnothing} = 1$.
A well-known theorem of Cauchy and Binet relates the minor of a product to the product of minors (\cite{horn2012matrix}):
\begin{theorem}\label{thm:CauchyBinet}
For integers $m, n, p, k$ and $m \times n$ matrix $A$ and $n \times p$ matrix $B$, we have
\begin{equation}\label{eq:mult}
\minor{AB}{S}{T} = \sum_{|U| \in \binom{[n]}{k}}
\minor{A}{S}{U} \minor{B}{U}{T}
\end{equation}
for any sets $S \in \binom{[m]}{k}$ and $T \in \binom{[p]}{k}$.
\end{theorem}

\begin{definition}
We will say that an $\mm \times \nn$ random matrix $R$ is {\em minor-orthogonal} if for all integers $k, \ell \leq \max \{ \mm, \nn \}$ and 
all sets $S, T, U, V$ with $|S| = |T| = k$ and $|U| = |V| = \ell$, we have
\[
\expect{R}{\minor{R}{S}{T}\minor{R^*}{U}{V}} 
= \bin \mydel{S = V}\mydel{T = U}.
\]
\end{definition}
Given a minor-orthogonal matrix $R$ it is easy to see from the definition that
\begin{enumerate} 
\item $R^*$ is minor orthogonal
\item any submatrix that preserves the largest dimension of $R$ is minor 
orthogonal
\end{enumerate}

\begin{lemma}\label{lem:rot}
If $R$ is minor-orthogonal and $Q$ is a fixed matrix for which $QQ^* = I$, then 
	$QR$ is minor-orthogonal. If $Q^*Q=I$ then $RQ$ is minor orthogonal.
\end{lemma}
\begin{proof}
For any sets $S, T$ with $|S| = |T| = k$, we have
\[
\minor{QR}{S}{T} = \sum_{|W| = k} \minor{Q}{S}{W} \minor{R}{W}{T}
\]
so for $|U| = |V| = \ell$, we have
\begin{align*}
\expect{R}{\minor{QR}{S}{T}\minor{(QR)^*}{U}{V}} 
&= 
\expect{R}{ 
\sum_{|W| = k}\sum_{|Z| = \ell} 
\minor{Q}{S}{W} \minor{R}{W}{T} 
\minor{R^*}{U}{Z} \minor{Q^*}{Z}{V} 
}
\\&= 
\sum_{|W| = k}\sum_{|Z| = \ell} \minor{Q}{S}{W} \minor{Q^*}{Z}{V} 
\bin \mydel{W = Z} \mydel{T = U}
\\&= 
\sum_{|W| = k} \bin \minor{Q}{S}{W} 
\minor{Q^*}{W}{V} \mydel{T = U}
\\&=
\bin
\mydel{S = V}\mydel{T = U},
\end{align*}
where the last line comes from the fact that $\minor{I}{S}{V} = \mydel{S = V}$.

The other case $RQ$ follows by repeating the argument with $R^*$ and noting that $RQ=(Q^*R^*)^*$.
\end{proof}

\begin{definition}
A \em{signed permutation matrix} is a matrix that can be written
  $E P$ where $E$ is a diagonal matrix with $\pm 1$ diagonal entries and
  $P$ is a permutation matrix.
\end{definition}

\begin{lemma}\label{lem:sps}
A uniformly random $\nn \times \nn$ signed permutation matrix is minor-orthogonal.
\end{lemma}
\begin{proof}
We can write a uniformly random signed permutation matrix $Q$ as $Q = E_\chi 
P_\pi$ where $P_\pi$ is a uniformly random permutation matrix and $E_\chi$ is a 
uniformly random diagonal matrix with $\chi\in\{ \pm 1 \}^\nn$ on the diagonal (and the 
two are independent).
Hence for $|S| = |T| = k$ and $|U| = |V| = \ell$, we have
\begin{align*}
\expect{Q}{\minor{Q}{S}{T}\minor{Q^*}{U}{V}} 
&=
\expect{\chi, \pi}{\minor{E_\chi P_\pi}{S}{T} \minor{ P_\pi^* E_\chi}{U}{V}} 
\\&= 
\sum_{|W| = k}\sum_{|Z| = \ell} \expect{\chi, \pi}{\minor{E_\chi}{S}{W} 
\minor{P_\pi}{W}{T} \minor{ P_\pi^* }{U}{Z} \minor{E_\chi}{Z}{V}}.
\\&= 
\expect{\chi, \pi}{\minor{E_\chi}{S}{S} \minor{P_\pi}{S}{T} \minor{ P_\pi^* 
}{U}{V} \minor{E_\chi}{V}{V}}
\\&
\expect{\chi}{ \prod_{i \in S} \chi_i \prod_{j \in V} \chi_j}
\expect{\pi}{\minor{P_\pi}{S}{T} \minor{ P_\pi^* }{U}{V}}.
\end{align*}
where the penultimate line uses the fact that a diagonal matrix $X$ satisfies 
$\minor{X}{A}{B} = 0$ whenever $A \neq B$.
Now the $\chi_i$ are uniformly distributed $\{ \pm 1 \}$ random variables, so 
\[
\expect{\chi}{ \prod_{i \in S} \chi_i \prod_{j \in V} \chi_j} = \mydel{S = V}
\]
and so we have
\begin{align*}
\expect{Q}{\minor{Q}{S}{T}\minor{Q^*}{U}{V}} 
&= 
\expect{\pi}{\minor{P_\pi}{S}{T} \minor{ P_\pi^* }{U}{V}} \mydel{S = V}
\\&=
\expect{\pi}{\minor{P_\pi}{S}{T} \minor{ P_\pi }{S}{U}} \mydel{S = V}
\end{align*}
Furthermore, $\minor{P_\pi}{S}{T} = 0$ except when $T = \pi(S)$, so in order 
for both $\minor{P_\pi}{S}{T}$ and $\minor{ P_\pi }{S}{U}$ to be nonzero 
simultaneously requires $U = T$.
In the case that $U = T = \pi(S)$, $\minor{P_\pi}{S}{T} = \pm 1$, and so we have
\begin{align*}
\expect{Q}{\minor{Q}{S}{T}\minor{Q^*}{U}{V}} 
&= 
\expect{\pi}{\minor{P_\pi}{S}{T}^2} \mydel{S = V} \mydel{T = U}
\\&= 
\expect{\pi}{\mydel{\pi(S) = T}} \mydel{S = V} \mydel{T = U}
\end{align*}

The probability that a permutation 
length $\nn$ maps a set $S$ to a set $T$ with $|S| = |T| = k$ is 
\[
\frac{k!(\nn-k)!}{\nn!} = \frac{1}{\binom{\nn}{k}}
\]
and so for $|S| = |T| = k$, we have
\[
\expect{Q}{\minor{Q}{S}{T}\minor{Q^*}{U}{V}}  
= \frac{1}{\binom{\nn}{k}}  \mydel{S = V} \mydel{T = U}
\]
as required. The case $m> n$ follows by considering $R^*$ instead of $R$.
\end{proof}

\begin{corollary}
An $m\times n$ Haar random matrix from the Stiefel manifold $\csteifel{m}{n}$ is minor-orthogonal.
\end{corollary}
\begin{proof}
Let $R$ be such a random matrix, and assume $m\le n$.
As a signed permutation matrix is unitary, 
  $RQ$ is also Haar distributed for any fixed signed permutation matrix 
  $Q$. By Lemma~\ref{lem:rot} it is also minor-orthogonal.
Hence
\[
\expect{R}{\minor{R}{S}{T}\minor{R^*}{U}{V}} 
=
\expect{R}{\minor{R Q}{S}{T}\minor{(RQ)^*}{U}{V}} 
\]
and so 
\[
\expect{R}{\minor{R}{S}{T}\minor{R^*}{U}{V}} 
=
\expect{R, Q}{\minor{R}{S}{T}\minor{R^*}{U}{V}} 
=
\expect{R, Q}{\minor{RQ}{S}{T}\minor{(RQ)^*}{U}{V}},
\]
where $Q$ is a uniform signed permutation independent from $R$.
By Lemma~\ref{lem:sps}, $Q$ is minor-orthogonal and so, for fixed $R$,  
Lemma~\ref{lem:rot} implies that $RQ$ is minor-orthogonal. 
So 
\[
\expect{R}{\expect{Q}{\minor{RQ}{S}{T}\minor{(RQ)^*}{U}{V}}}
= 
\expect{R}{ \frac{1}{\binom{n}{k}} \mydel{S = V}\mydel{T = U} }
=
\frac{1}{\binom{n}{k}} \mydel{S = V}\mydel{T = U}
\]
as required.
\end{proof}

\subsection{Formulas}\label{ssec:formulas}

We begin this section by mentioning a well-known formula for the determinants of a sum of matrices (see \cite{marcus1990determinants}):

\begin{theorem}\label{thm:LaplaceExpansion}
For integers $k \leq \nn$, $\nn \times \nn$ matrices $A, B$, 
  and sets $S, T \in \binom{[n]}{k}$, we have
\begin{equation}\label{eq:add}
\minor{A + B}{S}{T} = \sum_{i=0}^{k} \sum_{U, V \in \binom{[k]}{i}} 
(-1)^{\| U\|_1 + \|V \|_1} 
\minor{A}{U(S)}{V(T)} 
\minor{B}{\overline{U}(S)}{\overline{V}(T)},
\end{equation}
where $\overline{U} = [k] \setminus U$.
\end{theorem}

We denote the coefficient of $(-1)^{k} x^{d-k}$ of the characteristic polynomial of a $d$-dimensional
  matrix $A$ by $\sigma_{k} (A)$, which we recall is the $k$th elementary symmetric function of
  the eigenvalues of $A$.  
We will repeatedly use the fact that
\begin{equation}\label{eqn:sigmak}
\sigma_{k}(A) = \sum_{|S|=k}\minor{A}{S}{S}.
\end{equation}

\begin{lemma}\label{lem:RBR}
For integers $m \leq n$, let $B$ be an $n \times n$ matrix and let $R$ be a $m \times n$ minor-orthogonal matrix independent from $B$.
Then for all $|S| = |T| = k$, we have
\[
\expect{R}{
\minor{R B R^*}{S}{T}}
=
\mydel{S = T} \frac{\sigma_k(B)}{\binom{n}{k}}
\]
\end{lemma}
\begin{proof}
By Theorem~\ref{thm:CauchyBinet}, 
\begin{align*}
\expect{R}{\minor{R B R^*}{S}{T}}
&=
\sum_{X, Y \in \binom{[n]}{k}}
\expect{R}{\minor{R}{S}{X} \minor{B}{X}{Y} \minor{R^*}{Y}{T}}
\\&=
\sum_{X, Y \in \binom{[n]}{k}} \minor{B}{X}{Y}
\expect{R}{\minor{R}{S}{X} \minor{R}{T}{Y}}
\\&= \sum_{X, Y \in \binom{[n]}{k}} \minor{B}{X}{Y} 
\frac{1}{\binom{n}{k}} 
\mydel{X = Y} \mydel{S = T}
\\&= \sum_{X \in \binom{[n]}{k}} \minor{B}{X}{X} \frac{1}{\binom{n}{k}} \mydel{S = T}.
\end{align*}
\end{proof}

The above Lemma yields a quick proof of Lemma \ref{lem:restrict}.
\begin{proof}[Proof of Lemma \ref{lem:restrict}]
	Let $A$ be an $a\times a$ matrix and let $Q$ be Haar distributed on $\csteifel{d}{a}$. The $k^{th}$ coefficient of $\E \charp{QAQ^*}{x}$ is equal to
\begin{align*}
	\expect{Q}{\sigma_k(QAQ^*)} &= \sum_{|S|=k} \expect{Q}{ [QAQ^*]_{S,S}}\\
	&= \sum_{|S|=k}\frac{\sigma_k(A)}{\binom{a}{k}}\quad\textrm{by Lemma \ref{lem:RBR} and minor-orthogonality of $Q$}\\
	&= \frac{\binom{d}{k}}{\binom{a}{k}},
\end{align*}
	which is precisely the $k^{th}$ coefficient of $\frac{d!}{a!}D^{a-d}\charp{Q}{x}$.
\end{proof}

\subsubsection{Symmetric Additive and Multiplicative Convolutions}

Using Lemma~\ref{lem:RBR}, we can easily prove Theorems \ref{thm:symmetricAdditive} and \ref{thm:symmetricMult} by showing equality of each coefficient as per \eqref{eqn:sigmak}.


\begin{theorem}[Implies Theorem \ref{thm:symmetricAdditive}]\label{thm:symmetricAdditiveCoeff}
Let $A$ and $B$ be $\dd \times \dd$ matrices, and let $R$ be a random $\dd \times \dd$ minor-orthogonal matrix.
Then 
\[
\expect{R}{\sigma_k(A + R B R^*)} = \sum_{i=0}^k \frac{\binom{\dd-i}{k-i}}{\binom{\dd}{k-i}}\sigma_i(A)\sigma_{k-i}(B).
\]
\end{theorem}
\begin{proof}
By \eqref{eqn:sigmak}, Theorem~\ref{thm:LaplaceExpansion}, and then Lemma~\ref{lem:RBR}, we have
\begin{align*}
\expect{R}{\sigma_k(A + R B R^*)} 
&= \sum_{S \in \binom{[\cc]}{k}} \expect{R}{\minor{A + R B R^*}{S}{S}} 
\\&= 
\sum_{S \in \binom{[\cc]}{k}}
\sum_{i=0}^{k} \sum_{U, V \in \binom{[k]}{i}} 
(-1)^{\| U\|_1 + \|V \|_1} 
\minor{A}{U(S)}{V(S)} 
\expect{R}{\minor{R B R^*}{\overline{U}(S)}{\overline{V}(S)}}
\\&=
\sum_{S \in \binom{[\cc]}{k}}
\sum_{i=0}^{k} \sum_{U, V \in \binom{[k]}{i}} 
(-1)^{\| U \|_1 + \|V \|_1} 
\minor{A}{U(S)}{V(S)} \mydel{\overline{U}(S) = \overline{V}(S)} \frac{\sigma_{k-i}(B)}{\binom{\dd}{k-i}}
\\&=
\sum_{i=0}^{k} 
\frac{\sigma_{k-i}(B)}{\binom{\dd}{k-i}}
\sum_{S \in \binom{[\cc]}{k}}
\sum_{U \in \binom{[k]}{i}} 
\minor{A}{U(S)}{U(S)} 
\end{align*}
where the last equality uses the fact that $\overline{U}(S) = \overline{V}(S)$ if and only if $U = V$.
Finally, 
\[
\sum_{S \in \binom{[\cc]}{k}}
\sum_{U \in \binom{[k]}{i}} 
\minor{A}{U(S)}{U(S)} 
=
\binom{\cc-i}{k-i} \sum_{V \in \binom{[\cc]}{i}} \minor{A}{V}{V}
\]
as $\binom{\cc-i}{k-i}$ is the number of times a set $V$ appears as $V = U(S)$ for some $S$ and some $U$.
That is, the number of ways we can add elements to a set of size $V$ to obtain a set of size $k$.
\end{proof}

Using Theorem~\ref{thm:symmetricAdditiveCoeff} and Lemma~\ref{lem:sps} we can derive another useful formula for the symmetric additive convolution, this time as a function of the {\em roots}.
It states that $p(x) \sqsum_d q(x)$ is the average of all polynomials you can form by adding the roots of $p$ and $q$ pairwise.

\begin{theorem}\label{thm:symmetricAdditiveRoots}
For $p(x) = \prod_{i=0}^d (x - a_i)$ and $q(x) = \prod_{i=0}^d (x - b_i)$ we have
\[
p(x) \sqsum_d q(x) 
= \frac{1}{d!} \sum_{\sigma} \prod_{i=1}^{d} (x - a_i - b_{\sigma(i)})
\]
where the sum is over permutations $\sigma$ of $[d]$.
\end{theorem}
\begin{proof}
Let $A$ be the diagonal matrix with diagonal elements $\{ a_i \}$ and let $B$ be the diagonal matrix with diagonal elements $\{ b_i \}$.
By Theorem~\ref{thm:symmetricAdditiveCoeff}, we have
\begin{align*}
p(x) \sqsum_d q(x) 
&=
\mydet{x I - A} \sqsum_{d} \mydet{xI - B}
\\& = \expect{R}{ \mydet{xI - A - R B R^*}}.
\end{align*}
where the expectation can be taken over any minor-orthogonal random matrix $R$.
By Lemma~\ref{lem:sps}, we can take $R$ to be a (uniformly random) signed permutation matrix.
Since $A$ and $B$ are diagonal, it is easy to compute (for each fixed value of $R$)
\[
\mydet{xI - A - R B R^*} = \prod_i (x - a_i - b_{\sigma(i)})
\]
where $\sigma$ is the permutation part of $R$ (all of the signs cancel).
Averaging over these gives the result.
\end{proof}

\begin{theorem}[Implies Theorem \ref{thm:symmetricMult}]\label{thm:symmetricMultCoeff}
Let $A$ and $B$ be $\dd \times \dd$ matrices, and let $R$ be an $\dd \times \dd$ minor-orthogonal matrix.  \dan{I removed phrases like "R is independent from A", as dodging a ghost like this only creates confusion.}
Then 
\[
\expect{R}{\sigma_k(A R B R^*)} = \frac{\sigma_k(A)\sigma_k(B)}{\binom{\dd}{k}}.
\]
\end{theorem}
\begin{proof}
By Theorem~\ref{thm:CauchyBinet} and then Lemma~\ref{lem:RBR}, we have
\begin{align*}
\expect{R}{\sigma_k(A R B R^*)} 
&= \sum_{S \in \binom{[\cc]}{k}} \expect{R}{\minor{A R B R^*}{S}{S}} 
\\&= 
\sum_{S, T \in \binom{[\cc]}{k}}
\minor{A}{S}{T} \expect{R}{\minor{R B R^*}{T}{S}}
\\&= 
\sum_{S, T \in \binom{[\cc]}{k}}
\minor{A}{S}{T}
\mydel{T = S} \frac{\sigma_k(B)}{\binom{\dd}{k}}
\\&= 
\frac{\sigma_k(A)\sigma_k(B)}{\binom{\dd}{k}}.
\end{align*}
\end{proof}

\subsubsection{Asymmetric Additive Convolution}

The proof of the asymmetric convolution is a bit more involved, due to the appearance of a second random matrix.

\begin{lemma}\label{lem:QandR}
Let $B$ be a $\bb \times \dd$ matrix and let $Q$ and $R$ be $\cc \times \dd$ independently random minor-orthogonal matrices.
Then
\[
\expect{Q, R}{\minor{Q B R^*}{S}{T} \minor{(Q B R^*)^*}{U}{V} }
= 
\frac{ \mydel{T = U}\mydel{S = V}}{\binom{\bb}{k}\binom{\dd}{k}} \sigma_k(BB^*)
\]
for any $|S| = |T| = k$ and $|U| = |V| = \ell$.
\end{lemma}
\begin{proof}
By Theorem~\ref{thm:CauchyBinet} we have 
\[
\minor{Q B R^*}{S}{T} \minor{(Q B R^*)^*}{U}{V}
= 
\sum_{|W|=|Z| = k}
\sum_{|W'|=|Z'| = \ell}
\minor{Q}{S}{W}\minor{B}{W}{Z}\minor{R^*}{Z}{T}\minor{R}{U}{W'}\minor{B^*}{W'}{Z'}\minor{Q^*}{Z'}{V}
\]
where
\[
\expect{Q}{\minor{Q}{S}{W}\minor{Q^*}{Z'}{V}} = \frac{1}{\binom{\bb}{k}}\mydel{S = V}\mydel{W = Z'}
\]
and
\[
\expect{R}{\minor{R^*}{Z}{T}\minor{R}{U}{W'}} = \frac{1}{\binom{\dd}{\ell}}\mydel{T = U}\mydel{Z = W'}.
\]
Hence
\begin{align*}
\expect{Q, R}{\minor{Q B R^*}{S}{T} \minor{(Q B R^*)^*}{U}{V}}
&= 
\sum_{|W|=|Z|= k}
\frac{1}{\binom{\bb}{k}}
\frac{1}{\binom{\dd}{k}} \minor{B}{W}{Z}\minor{B^*}{Z}{W}\mydel{T = U}\mydel{S = V}
\\&= 
\sum_{|W|=k}
\frac{1}{\binom{\bb}{k}}
\frac{1}{\binom{\dd}{k}} \minor{BB^*}{W}{W} \mydel{T = U}\mydel{S = V}
\\&= \frac{1}{\binom{\bb}{k}}
\frac{1}{\binom{\dd}{k}} \sigma_k(BB^*) \mydel{T = U}\mydel{S = V}
\end{align*}
\end{proof}

\begin{theorem}[Implies Theorem \ref{thm:asymmetricAdditive}]\label{thm:asymmetricAdditiveCoeff}
Let $A$ and $B$ be $\aa \times \cc$ matrices and let $Q$ and $R$ be $\cc \times \dd$ minor-orthogonal matrices that are independent from $A, B$ and each other.
Then
\[
\expect{Q, R}{\sigma_k((A + Q B R^*)(A + Q B R^*)^*)}
= 
\sum_i \frac{\binom{\aa-i}{k-i}\binom{\cc-i}{k-i}}{\binom{\bb}{k-i}\binom{\dd}{k-i}} \sigma_i(AA^*)\sigma_{k-i}(BB^*).
\]
\end{theorem}
\begin{proof}
Let $|U| = k$. 
Then by Theorem~\ref{thm:CauchyBinet} we have
\[
\minor{(A + Q B R^*)(A + Q B R^*)^*}{U}{U}
= 
\sum_{|V|=k} 
\minor{A + Q B R^*}{U}{V}
\minor{(A + Q B R^*)^*}{V}{U}.
\]
By Theorem~\ref{thm:LaplaceExpansion},
\[
\minor{A + Q B R^*}{U}{V}
=
\sum_{i=0}^{k} \sum_{|W|=|Z|=i} (-1)^{\|W\|_1 + \|Z\|_1}
\minor{A}{W(U)}{Z(V)}
\minor{Q B R^*}{\overline{W}(U)}{\overline{Z}(V)}
\]
and
\[
\minor{(A + Q B R^*)^*}{V}{U}
=
\sum_{i=0}^{k} \sum_{|W'|=|Z'|=i} (-1)^{\|W'\|_1 + \|Z'\|_1} 
\minor{A^*}{W'(V)}{Z'(U)}
\minor{(Q B R^*)^*}{\overline{W'}(V)}{\overline{Z'}(U)}.
\]
Applying Lemma~\ref{lem:QandR} then gives
\begin{align*}
\expect{Q, R}{
\minor{Q B R^*}{\overline{W}(U)}{\overline{Z}(V)}
\minor{(Q B R^*)^*}{\overline{W'}(V)}{\overline{Z'}(U)}
}
&=
\frac{\mydel{\overline{W'}(V) = \overline{Z}(V)}\mydel{\overline{W}(U) = \overline{Z'}(U)}}{\binom{\bb}{k-i}\binom{\dd}{k-i}}
\sigma_{k-i}(BB^*)
\\&= 
\frac{\mydel{W' = Z}\mydel{W = Z'}}{\binom{\bb}{k-i}\binom{\dd}{k-i}}
\sigma_{k-i}(BB^*)
\end{align*}
where again we use the fact that $A(X) = B(X)$ if and only if $A = B$.
Hence
\begin{align*}
&\expect{Q, R}{
\minor{(A + Q B R^*)(A + Q B R^*)^*}{U}{U}
}
\\&= 
\frac{1}{\binom{\bb}{k-i}}\frac{1}{\binom{\dd}{k-i}}\sigma_{k-i}(BB^*)
\sum_{|V| = k} \sum_{i=0}^{k} \sum_{|W|=|Z|=i}
\minor{A}{W(U)}{Z(V)} \minor{A^*}{Z(V)}{W(U)}.
\end{align*}
Similar to Theorem~\ref{thm:symmetricAdditiveCoeff}, we have 
\begin{align*}
\sum_{|U|=|V|=k} \sum_{|W|=|Z|=i}
\minor{A}{W(U)}{Z(V)} \minor{A^*}{Z(V)}{W(U)}
&= \theta_{i, k} \sum_{|S| = |T| = i} \minor{A}{S}{T} \minor{A^*}{T}{S}
\\&= \theta_{i, k} \sum_{|S|= i} \minor{AA^*}{S}{S}
\\&= \theta_{i, k} \sigma_i(AA^*),
\end{align*}
where  $\theta_{i, k}$ is the number of ways to complete $S$ to a set of size $k$ {\em and} complete $T$ to a set of size $k$.
Since $S \subseteq [\aa]$ and $T \subseteq [\cc]$, this is precisely $\binom{\aa-i}{k-i}\binom{\cc-i}{k-i}$, completing the proof.
\end{proof}

\section{Real Rootedness of the Asymmetric Additive Convolution}\label{sec:rr}

We will use the theory of stable polynomials to prove Theorem \ref{thm:asymAdditiveRR}.
For this theorem, we will require Hurwitz stable polynomials.
We recall that a  multivariate polynomial
   $p (z_{1}, \dots , z_{m}) \in \Reals{}[z_{1}, \dots , z_{m}]$
  is \textit{Hurwitz stable} if it
   is identically zero or if whenever the real part of $z_{i}$ is positive for all $i$,
  $p (z_{1}, \dots , z_{m}) \not = 0$.

Instead of proving Theorem \ref{thm:asymAdditiveRR} directly, we prove the following theorem
  from which it follows by substituting $-x$ for $x$.

\begin{theorem}\label{thm:asymNegRR}
Let
\[
  p (x) = \sum_{i=0}^{d} x^{d-i} a_{i}
\quad \text{and} \quad 
  q (x) = \sum_{i=0}^{d} x^{d-i} b_{i}
\]
be in $\rrneg{d}$.
Then,
\[
  r (x) 
 \defeq 
 \sum_{k=0}^{d} x^{d-k} \sum_{i+j = k}
 \left(\frac{(d-i)! (d-j)!}{d! (d-i-j)!} \right)^{2}   a_{i} b_{j}
\]
is also in $\rrneg{d}$.
\end{theorem}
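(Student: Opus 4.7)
The plan is to realize $r(x)$ as an expected characteristic polynomial of a random positive semidefinite matrix and then prove real rootedness via the quadrature reduction (Theorem~\ref{thm:recperms}) together with an interlacing-families argument applied to the $2d$-dimensional symmetric dilation. First observe that since $p,q \in \rrneg{d}$, we have $a_i = e_i(\lambda_1,\dots,\lambda_d) \ge 0$ and $b_j = e_j(\mu_1,\dots,\mu_d)\ge 0$ for nonnegative $\lambda_i,\mu_j$. The closed-form expression for $r(x)$ then has manifestly nonnegative coefficients, and a monic real-rooted polynomial with nonnegative coefficients has only nonpositive roots; consequently it suffices to prove that $r$ is real rooted.

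By approximation, write $p(x) = \det(xI + AA^T)$ and $q(x) = \det(xI + BB^T)$ for some $d \times d$ matrices $A,B$. Unwinding the sign conventions in Theorem~\ref{thm:asymmetricAdditive} (by applying the substitution $x \mapsto -x$) gives
\[
r(x) = \E_{R,Q}\,\det\bigl(xI + (A+RBQ)(A+RBQ)^{T}\bigr).
\]
For each fixed $R,Q$ this integrand has only nonpositive roots; more strongly, the bivariate polynomial $\det(xI + y(A+RBQ)(A+RBQ)^T)$ is Hurwitz stable in $(x,y)$ because $(A+RBQ)(A+RBQ)^T$ is PSD. The key task is to show that the expectation over $R,Q$ preserves enough of this stability to yield real rootedness in $x$.

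To pass stability through the expectation, I would use Theorem~\ref{thm:recperms} to replace the Haar expectation by a finite average over signed-permutation pairs $(P,S)$, and then construct an interlacing family by conditioning on the entries of $P$ and $S$ one at a time. Since the dependence of $(A + PBS^T)(A+PBS^T)^T$ on the entries of $P,S$ is quadratic, I would pass to the $2d$-dimensional symmetric dilation
\[
\widetilde{A+PBS^T} \;=\; \begin{pmatrix} 0 & A+PBS^T \\ (A+PBS^T)^T & 0 \end{pmatrix},
\]
whose characteristic polynomial in $x$ equals the characteristic polynomial of $(A+PBS^T)(A+PBS^T)^T$ evaluated at $x^2$. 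In the dilation the dependence on each entry of $P,S$ is affine, so the standard arguments for common interlacers under low-rank perturbations should apply at every branching of the conditioning tree. An application of the interlacing-families theorem then yields real-rootedness of the full expectation, and hence of $r$.

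The main obstacle is verifying the common-interlacer property at each branching: switching one sign or transposition in a signed permutation induces a rank-one change in $A+PBS^T$ which, after dilation, becomes a symmetric perturbation of rank two, so the classical ``rank-one interlacing'' lemma has to be adapted --- for instance by decomposing the rank-two perturbation into two successive rank-one steps, or by invoking a multivariate Hurwitz-stability preserver directly. An attractive alternative that sidesteps interlacing families entirely is to prove multivariate real stability of the bivariate polynomial $H(x,y) = \E_{R,Q}\det(xI + y(A+RBQ)(A+RBQ)^T)$ directly via a Borcea--Br\"anden preserver argument applied to the signed-permutation formula; either approach locates the heart of the proof at this stability step.
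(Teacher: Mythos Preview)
Your route is genuinely different from the paper's, and the gap you flag is real and not easily closed.

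The paper never touches the matrix realization or interlacing families here. It works directly from the coefficient formula: starting from the Hurwitz-stable polynomials $f(x,y)=p(xy)$ and $g(x,y)=(xy)^{d}q(1/xy)$, it polarizes each into $2d$ variables (Proposition~\ref{pro:bbpolarization}), applies the polarization of $g$ as a constant-coefficient differential operator to that of $f$ using the Lieb--Sokal theorem (Theorem~\ref{thm:diffoper}), and then restricts back to the diagonal. A direct computation with elementary symmetric functions shows the result equals $r(xy)$, and Lemma~\ref{lem:rrneg} finishes. So the ``Borcea--Br\"and\'en preserver'' alternative you mention at the end is in fact the paper's actual argument, except that it is run on the explicit coefficient formula rather than on any expected determinant.

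Your primary route has a structural problem beyond the rank-two issue. Interlacing families need a tree in which each branching is over an \emph{independent} choice, and the entries of a permutation matrix cannot be revealed ``one at a time'' while remaining in the support of the distribution. If instead you branch on transpositions, a single transposition in $P$ swaps two rows of $PBS^{T}$ and is a rank-\emph{two} change there (rank four in the dilation), not rank one as you wrote; the standard rank-one interlacing lemma does not apply, and decomposing into two rank-one steps takes you outside the set of signed permutations, so the intermediate polynomials are no longer the conditional expectations you need at the tree nodes. The sign flips alone are better behaved (each is rank one in $A+PBS^{T}$, and the convex path $t\mapsto\mathrm{dil}(A + tC_{+}+(1-t)C_{-})$ stays a dilation, hence real-rooted), but this only handles the leaves under a \emph{fixed} pair of permutations; it does not give you common interlacers among the sign-averaged polynomials as $(\pi,\sigma)$ varies, which is exactly what the tree would require one level up. Absent a new idea for that step, the argument does not close.
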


We will use the following result to prove that a polynomial is in $\RRneg$.

\begin{lemma}\label{lem:rrneg}
Let $r (x)$ be a polynomial such that 
  $h (x,y) = r (xy)$ is Hurwitz stable.
Then, $r \in \RRneg$.
\end{lemma}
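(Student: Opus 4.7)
The plan is to prove the contrapositive: if $r$ has any root outside $(-\infty, 0]$, then $h(x,y) = r(xy)$ fails Hurwitz stability. So suppose $r(\lambda) = 0$ for some $\lambda \in \C \setminus (-\infty,0]$. The idea is to exhibit a specific pair $(x_0, y_0)$ with $\mathrm{Re}(x_0) > 0$ and $\mathrm{Re}(y_0) > 0$ such that $x_0 y_0 = \lambda$; then $h(x_0, y_0) = r(\lambda) = 0$, and since $h$ is not identically zero (as $r$ is nonzero), this contradicts Hurwitz stability of $h$.

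The construction is to take a principal square root. Write $\lambda = |\lambda| e^{i\theta}$ with $\theta \in (-\pi, \pi)$; since $\lambda \notin (-\infty, 0]$, we have $\theta \neq \pm \pi$. Then set $x_0 = y_0 = \sqrt{|\lambda|}\, e^{i\theta/2}$, so that $x_0 y_0 = |\lambda| e^{i\theta} = \lambda$. Since $\theta/2 \in (-\pi/2, \pi/2)$, we have $\mathrm{Re}(x_0) = \sqrt{|\lambda|}\cos(\theta/2) > 0$, and identically for $y_0$. This produces the required evaluation point in the open right half-plane squared.

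The only edge case to dispose of is when $r$ (hence $h$) is identically zero, in which case there is nothing to prove (the statement $r \in \RRneg$ is either vacuous or handled by convention, since the lemma's hypotheses about $r$ having located roots become trivial). I would mention this briefly, then run the argument above. The whole proof is essentially one paragraph — there is no real obstacle here, since the entire content of the lemma is the geometric observation that the map $(x,y) \mapsto xy$ carries the product of the open right half-planes onto $\C \setminus (-\infty, 0]$, which is exactly seen by the principal square root.
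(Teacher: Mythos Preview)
Your proof is correct and is essentially identical to the paper's own proof: take a root $\zeta$ of $r$ that is neither zero nor a negative real, observe it has a square root with positive real part, and set $x=y$ equal to that square root to violate Hurwitz stability of $h$. You are simply more explicit about the polar-form computation and about the identically-zero edge case, but the argument is the same.
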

\begin{proof}
Let $\zeta $ be any root of $r$.
If $\zeta$ is neither zero or negative, then it has a square root with positive real part.
Setting both $x$ and $y$ to this square root would contradict the Hurwitz stability of $h$.
\end{proof}

We will prove that $r(x)$ is in $\RRneg$
  by constructing a Hurwitz stable polynomial and applying Lemma \ref{lem:rrneg}.
To this end, we need a few tools for constructing Hurwitz stable polynomials.

The first is elementary.
\begin{claim}\label{clm:pxy}
If $p (x) \in \RRneg$, then the polynomial $f (x,y) = p (xy)$ is Hurwitz stable.
\end{claim}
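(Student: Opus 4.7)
The plan is to reduce the claim to an elementary fact about arguments of complex numbers by first factoring $p$ over the reals. Since $p \in \RRneg$ has a positive leading coefficient and only nonpositive real roots, we can write
\[
  p(x) = c \prod_{i=1}^{d} (x + \alpha_i),
\]
where $c > 0$ and each $\alpha_i \geq 0$. Consequently
\[
  f(x,y) = p(xy) = c \prod_{i=1}^{d} (xy + \alpha_i),
\]
and since a product of polynomials is Hurwitz stable iff each factor is, it suffices to show that for every $\alpha \geq 0$, the polynomial $xy + \alpha$ is Hurwitz stable.

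To verify this, I would fix $x, y \in \mathbb{C}$ with $\Re(x) > 0$ and $\Re(y) > 0$ and show that $xy$ cannot lie on the nonpositive real axis; this immediately implies $xy + \alpha \neq 0$ for every $\alpha \geq 0$. Writing $x = r_1 e^{i\theta_1}$ and $y = r_2 e^{i\theta_2}$ with $r_1, r_2 > 0$, the hypothesis $\Re(x), \Re(y) > 0$ translates to $\theta_1, \theta_2 \in (-\pi/2, \pi/2)$. Hence
\[
  xy = r_1 r_2 \, e^{i(\theta_1 + \theta_2)}, \qquad \theta_1 + \theta_2 \in (-\pi, \pi).
\]
Thus $xy$ has strictly positive modulus (ruling out $xy = 0$, which handles the case $\alpha = 0$) and its argument avoids $\pm\pi$ (ruling out $xy \in \mathbb{R}_{<0}$, which handles the case $\alpha > 0$).

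There is no real obstacle here; the statement is essentially an observation about the open right half-plane being closed under multiplication modulo the nonpositive reals. The only thing one has to be mildly careful about is the degenerate case $\alpha = 0$ (a root of $p$ at the origin), which is why I separated out the remark that $r_1, r_2 > 0$ forces $xy \neq 0$. Once the single-factor case is established, multiplicativity of Hurwitz stability finishes the proof.
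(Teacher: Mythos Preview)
Your proof is correct and follows the same essential idea as the paper: both arguments rest on the observation that if $\Re(x)>0$ and $\Re(y)>0$ then $xy$ cannot be a nonpositive real, hence cannot be a root of $p$. The paper's proof simply states this fact in one line without the preliminary factorization of $p$, which is an unnecessary (though harmless) detour in your version.
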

\begin{proof}
If both $x$ and $y$ have positive real part, then $xy$
  cannot be a nonpositive real, and thus cannot be a root of $p$.
\end{proof}

The second tool is the following result of Borcea and \Branden, which is a consequence of Corollary 5.9 of \cite{branden2007polynomials}. \dan{so, why do we cite this instead of the result?}
\begin{proposition}[Polarization]\label{pro:bbpolarization}
Let 
\[
p (x,y) = \sum_{i=0}^{d}\sum_{j=0}^{d} c_{i,j} x^{i} y^{j}
\]
be a Hurwitz stable polynomial.
For each integer $i$, let $\sigma_{i}^{x}$ be the $i$th elementary symmetric
  polynomial in the variables $x_{1}, \dots , x_{d}$, and let $\sigma_{i}^{y}$
  be the corresponding polynomial in $y_{1}, \dots , y_{d}$.
Then, the polynomial
\[
  P (x_{1}, \dots , x_{d}, y_{1}, \dots , y_{d})
\defeq 
\sum_{i=0}^{d}\sum_{j=0}^{d} c_{i,j} 
\frac{\sigma_{i}^{x} \sigma_{j}^{y}}{\binom{d}{i} \binom{d}{j}}
\]
is Hurwitz stable.
\end{proposition}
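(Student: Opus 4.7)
The plan is to deduce the Hurwitz stability of $P$ from that of $p$ by invoking the Grace--Walsh--Szeg\H{o} coincidence theorem twice, once in the $x$-variables and once in the $y$-variables. First I would record three structural facts about $P$: it is separately symmetric in $x_1,\dots,x_d$ and in $y_1,\dots,y_d$, since every elementary symmetric polynomial is; it is multi-affine (degree at most one) in each of the $2d$ variables, since each $\sigma_i^x$ and $\sigma_j^y$ has this property and $P$ is a linear combination of their products; and under the diagonal substitution $x_1=\dots=x_d=x$ and $y_1=\dots=y_d=y$ one has $\sigma_i^x=\binom{d}{i}x^i$ and $\sigma_j^y=\binom{d}{j}y^j$, so the denominators cancel and $P(x,\dots,x,y,\dots,y)=p(x,y)$.

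Next I would argue by contradiction. Suppose $P$ has a zero at $(\xi_1,\dots,\xi_d,\eta_1,\dots,\eta_d)$ with each coordinate lying in the open right half-plane $H=\{z\in\mathbb{C}:\mathrm{Re}(z)>0\}$. Fixing $\eta_1,\dots,\eta_d$ and viewing $P$ as a polynomial in $x_1,\dots,x_d$, it is symmetric and multi-affine and has a zero in $H^d$. The Grace--Walsh--Szeg\H{o} coincidence theorem, applied to the open circular region $H$, then produces a single $\xi\in H$ such that $P(\xi,\dots,\xi,\eta_1,\dots,\eta_d)=0$. Freezing this $\xi$ and repeating the argument with $y_1,\dots,y_d$ (the resulting polynomial is again symmetric and multi-affine) produces $\eta\in H$ with $P(\xi,\dots,\xi,\eta,\dots,\eta)=0$. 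By the diagonal identity this says $p(\xi,\eta)=0$ at a point of $H^2$, contradicting the Hurwitz stability of $p$.

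The main delicate point is locating and citing the correct form of Grace--Walsh--Szeg\H{o}: I need the version valid for an arbitrary open circular region (in particular, the right half-plane rather than only a disk or the upper half-plane), applied to polynomials that are symmetric and multi-affine, which is a standard but somewhat particular statement. There is also a minor degenerate case to address: if, after fixing the $\eta_j$, the polynomial $P$ is identically zero in $x_1,\dots,x_d$, then any $\xi\in H$ works and no appeal to Grace--Walsh--Szeg\H{o} is needed; and if after fixing $\xi$ the polynomial is identically zero in $y_1,\dots,y_d$, then $p(\xi,y)$ vanishes for every $y$, which already contradicts Hurwitz stability of $p$ by picking any $y\in H$. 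Beyond these bookkeeping remarks, the argument requires no computation other than the coefficient cancellation that recovers $p$ on the diagonal.
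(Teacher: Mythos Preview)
Your proposal is correct and is precisely the approach the paper has in mind: the paper does not give its own proof but states that the argument is identical to that of Proposition~3.4 in Borcea--Br\"and\'en, which is exactly the Grace--Walsh--Szeg\H{o} polarization argument you describe. Your identification of the delicate point---that one needs the version of GWS valid for the convex circular region $H=\{\mathrm{Re}\,z>0\}$, which removes any maximal-degree hypothesis---is exactly right, and your handling of the degenerate identically-zero cases is clean.
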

The polynomial $P$ is called the \textit{polarization} of $p$.
We remark that $P (x, \dots ,x, y, \dots , y) = p (x,y)$.

The last result we need is due to  Lieb and Sokal \cite{LiebSokal} (see also \cite[Theorem 8.4]{borceaBrandenLeeYang2}).
\begin{theorem}\label{thm:diffoper}
Let $P (z_{1}, \dots , z_{d})$ and $Q (z_{1}, \dots , z_{d})$ be Hurwitz stable polynomials.
Let $\deriv_{i}^{z}$ denote the derivative with respect to $z_{i}$.
Then,
\[
  Q (\deriv_{1}^{z}, \dots , \deriv_{d}^{z}) P (z_{1}, \dots , z_{d})
\]
is Hurwitz stable.
\end{theorem}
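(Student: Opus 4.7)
The plan is to prove the theorem by first handling the case where $Q$ is a linear polynomial, and then using polarization together with an inductive single-variable argument to handle general $Q$.

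For the base case, I would show that $L(\partial)$ preserves Hurwitz stability whenever $L(y) = c_0 + \sum_i c_i y_i$ is Hurwitz stable. Given a Hurwitz stable $P$, fix any $z$ with $\mathrm{Re}(z_i) > 0$ and, for each $i$, factor $P$ as a polynomial in $z_i$ alone (with the remaining variables held as parameters) to get $P = C_i \prod_k (z_i - r_k^{(i)})$ with $\mathrm{Re}(r_k^{(i)}) \leq 0$ by the Hurwitz stability of $P$. Then
\[
\frac{L(\partial) P}{P} \;=\; c_0 + \sum_i c_i \sum_k \frac{1}{z_i - r_k^{(i)}} \;=\; L(w),
\]
where $w_i := \sum_k 1/(z_i - r_k^{(i)})$ has positive real part (each summand does, since $\mathrm{Re}(z_i - r_k^{(i)}) > 0$). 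Since $L$ is Hurwitz stable, $L(w) \neq 0$ and hence $L(\partial) P \neq 0$ on the positive orthant. By composition of stability-preserving operators, products of Hurwitz stable linear forms also yield stability-preserving differential operators.

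For general $Q$, I would apply the polarization procedure (Proposition \ref{pro:bbpolarization} iterated over each variable of $Q$) to reduce to the case where $Q$ is multiaffine, and then induct on the number of variables appearing in $Q$. Writing $Q(y) = Q_0(y') + y_d Q_1(y')$ with $y' = (y_1, \ldots, y_{d-1})$, the inductive hypothesis would give that $Q_0(\partial) P$ and $Q_1(\partial) P$ are Hurwitz stable. To combine them into $Q(\partial) P = Q_0(\partial) P + \partial_{z_d} [Q_1(\partial) P]$, I would invoke a single-variable Lieb--Sokal lemma: if $A, B$ are Hurwitz stable polynomials such that $A + y B$ (in an auxiliary variable $y$) satisfies a Hurwitz stability condition inherited from the joint Hurwitz stability of $Q = Q_0 + y_d Q_1$, then $A + \partial_{z_d} B$ is Hurwitz stable.

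The main obstacle is this last combination step. Mere Hurwitz stability of $Q_0(\partial) P$ and $Q_1(\partial) P$ is not enough, since sums of Hurwitz stable polynomials need not be Hurwitz stable; so the argument must transport the joint Hurwitz stability of $Q$ through the operator correspondence to obtain a Hermite--Biehler-style interlacing between $Q_0(\partial) P$ and $Q_1(\partial) P$ that permits their stable combination. Formulating what this interlacing means for multivariate polynomials and verifying that the differential operator construction preserves it is the heart of the Lieb--Sokal argument; the edge case in which $P$ has degree zero in some variable can be handled by a routine limiting argument that reduces to a form of $L$ on the boundary of the positive orthant.
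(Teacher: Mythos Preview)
The paper does not prove this theorem; it quotes it as a result of Lieb and Sokal (with a further pointer to Borcea and \Branden), so there is no proof in the paper to compare your attempt against.

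Your linear-case argument is correct and is in fact the entire computational content of the proof. The gap is in how you set up the induction. Your inductive hypothesis says only that $A := Q_0(\partial')P$ and $B := Q_1(\partial')P$ are each Hurwitz stable, but the combination step requires that $A + yB$ be \emph{jointly} Hurwitz stable in $(z,y)$, and this does not follow. You recognize this and defer it to an unspecified ``single-variable Lieb--Sokal lemma,'' which is circular: that lemma is essentially the theorem.

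The repair is to structure the induction differently. After polarizing $Q$ to be multiaffine, start from the Hurwitz stable polynomial $P(z)\,Q(w)$ in the disjoint variables $(z,w)$ and substitute one $w_k \mapsto \partial_{z_{j}}$ at a time, inducting on the number of substitutions made. The inductive hypothesis is now that the current polynomial $R = R_0 + w_k R_1$ is Hurwitz stable in all remaining variables, which is exactly the joint stability you were missing. The one-variable step then follows from your own linear-case computation: at any point with positive real parts, $\mathrm{Re}(R_0/R_1) \geq 0$ (from stability of $R$ in $w_k$) and $\mathrm{Re}\bigl((\partial_{z_j} R_1)/R_1\bigr) > 0$ (by factoring the Hurwitz stable $R_1 = \partial_{w_k} R$ in $z_j$, just as you did for $P$), so $R_0 + \partial_{z_j} R_1 \neq 0$. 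The degenerate cases ($R_1 \equiv 0$, or $R_1$ independent of $z_j$) are routine. No multivariate Hermite--Biehler theory is required.
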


\begin{proof}[Proof of Theorem~\ref{thm:asymNegRR}]
Define $f (x,y) = p (xy)$ and $g (x,y) = (xy)^{d} q (1/xy)$.
Let $F (x_{1}, \dotsc , x_{d}, y_{1}, \dotsc , y_{d})$
  be the polarization of $f (x,y)$ in the variables
  $x_{1}, \dots , x_{d}, y_{1}, \dots , y_{d}$.
Let $G (x_{1}, \dots , x_{d}, y_{1}, \dots , y_{d})$ be the analogous polarization of $g (x,y)$.

Let $\sigma_{i}^{x}$ be the $i$th elementary symmetric function in
  $x_{1}, \dotsc , x_{d}$, and let $\delta_{i}^{x}$ be the $i$th
  elementary symmetric function in $D^{x}_{1}, \dotsc , D^{x}_{d}$.
Define $\sigma_{i}^{y}$ and $\delta_{i}^{y}$ analogously.

Then,
\[
  F (x_{1}, \dotsc , x_{d}, y_{1}, \dots , y_{d})
  = 
  \sum_{i=0}^{d}  a_{i} 
  \frac{\sigma_{d-i}^{x} \sigma_{d-i}^{y} }{\binom{d}{i}^{2}},
\]
and
\[
	G (D^{x}_{1}, \dotsc , D^{x}_{d}, D^{y}_{1}, \dotsc , D^{y}_{d})
  = 
  \sum_{i=0}^{d}  b_{i} 
  \frac{\delta_{i}^{x}  \delta_{i}^{y} }{\binom{d}{i}^{2}}.
\]
Define 
\[
H (x_{1}, \dotsc , x_{d}, y_{1}, \dots , y_{d})
  =
  G (D^{x}_{1}, \dotsc , D^{x}_{d}, D^{y}_{1}, \dotsc , D^{y}_{d})
  F (x_{1}, \dotsc , x_{d}, y_{1}, \dots , y_{d}).
\]
We know from Theorem \ref{thm:diffoper} that $H$ is Hurwitz stable.
Define
\[
  h (x,y) =   H (x, \dots, x, y, \dots , y).
\]
It is immediate that $h$ is Hurwitz stable too.
We will prove that $h (x,y) = r (xy)$,
  which by Lemma~\ref{lem:rrneg} implies that $r$ is in $\RRneg$.

It will be convenient to know that
\[
  \delta^{x}_{i} \sigma^{x}_{j}
=
\begin{cases}
\binom{d+i-j}{i} \sigma^{x}_{j-i}& \text{if $i \leq j$}           
\\
0  & \text{otherwise}.
\end{cases}
\]
We may now compute
\begin{align*}
H (x_{1},\dotsc x_{d}, y_{1}, \dots , y_{d})
& = 
  \sum_{i=0}^{d} 
   b_{i} 
  \frac{\delta_{i}^{x}  \delta_{i}^{y} }{\binom{d}{i}^{2}}
  \sum_{j=0}^{d}
   a_{j} 
  \frac{\sigma_{d-j}^{x} \sigma_{d-j}^{y} }{\binom{d}{j}^{2}}
\\
& = 
  \sum_{i=0}^{d}
  \sum_{j : i \leq d-j}
  \frac{ b_{i}}{\binom{d}{i}^{2}}
  \frac{ a_{j}}{\binom{d}{j}^{2}}
  \delta_{i}^{x}   \delta_{i}^{y} 
  \sigma_{d-j}^{x}   \sigma_{d-j}^{y} 
\\
& = 
  \sum_{i +j \leq d} 
  \frac{a_{j} b_{i}}{\binom{d}{i}^{2} \binom{d}{j}^{2}}
  \delta_{i}^{x} 
  \delta_{i}^{y} 
  \sigma_{d-j}^{x} 
  \sigma_{d-j}^{y} 
\\
& = 
  \sum_{i +j \leq d} 
  \frac{a_{j} b_{i}}{\binom{d}{i}^{2} \binom{d}{j}^{2}}
  \binom{d+i - (d-j) }{i}^{2}
  \sigma_{d-i-j}^{x}
  \sigma_{d-i-j}^{y}
\\
& = 
  \sum_{i +j \leq d} 
  a_{j} b_{i}
 \frac{\binom{i+j}{i}^{2}}{\binom{d}{i}^{2} \binom{d}{j}^{2}}
  \sigma_{d-i-j}^{x}
  \sigma_{d-i-j}^{y}
\\
& = 
  \sum_{k = 0}^{d} 
  \sum_{i + j = k}
  a_{j} b_{i}
  \frac{\binom{k}{i}^{2}}{\binom{d}{i}^{2} \binom{d}{j}^{2}}
  \sigma_{d-k}^{x}
  \sigma_{d-k}^{y}.
\end{align*}
So, 
\begin{align*}
  h (x, y)
& = 
  \sum_{k = 0}^{d} 
  \sum_{i + j = k}
  a_{j} b_{i}
 \left(  \frac{\binom{d}{k} \binom{k}{i}}{\binom{d}{i} \binom{d}{j}} \right)^{2}
  x^{d-k} y^{d-k}.
\\
& = 
  \sum_{k = 0}^{d} 
  \sum_{i + j = k}
  a_{j} b_{i}
 \left(   \frac{
  (d-i) ! (d-j) !
}{
 d! (d-i-j) ! 
}   \right)^{2}
  x^{d-k} y^{d-k}.
\end{align*}
So, $r (xy) = h (x,y)$ and therefore must have only nonpositive real roots.
\end{proof}


\section{Transform bounds}\label{sec:bounds}

All of our transform bounds are proved using the following lemma.
It allows us to pinch together two of the roots of a polynomial without
  changing the value of the Cauchy transform at a particular point.
Through judicious use of this lemma, we are able to reduce statements about arbitrary
  polynomials to statements about polynomials with just one root.

\begin{lemma}[Pinching]\label{lem:pinch}
Let $\alpha > 0$, $d \geq 2$, and let 
  $p (x) \in \rrpoly{d}$ have at least two distinct roots.
Write $p (x) = \prod_{i=1}^{d} (x - \lambda_{i}) $
  where $\lambda_{1} \geq \lambda_{2} \geq \dots \geq \lambda_{d}$
  and $\lambda_{1} > \lambda_{k}$ for some $k$.
Then there exist real $\mu$ and $\rho$ so that
  $p (x) = \phat (x) + \ptil (x)$,
  where
\[
  \ptil (x) = (x- \mu)^{2} \prod_{i \not \in \setof{1,k}} (x - \lambda_{i}) \in \rrpoly{d}
\quad \text{and} \quad 
  \phat (x) = (x- \rho ) \prod_{i \not \in \setof{1,k}} (x - \lambda_{i}) \in \rrpoly{d-1},
\]
and 
\begin{itemize}
\item [a.]  $\maxroot{\shiftoper{\alpha} \ptil } = \maxroot{\shiftoper{\alpha}  \phat } = \maxroot{\shiftoper{\alpha } p}$,
\item [b.]  $\lambda_{1} > \mu > \lambda_{k}$, and
\item [c.]  $\rho > \lambda_{1}$.
In particular, if $d \geq 3$ then $\phat$ has at least two distinct roots.
\end{itemize}
\end{lemma}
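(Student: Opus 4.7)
The plan is to work throughout in the Cauchy-transform picture, relying on the identification $\maxroot{\shiftoper{\alpha} r} = \invcauchy{r}{1/(\alpha d)}$ from Section~\ref{sec:cauchy}. Set $x^{*} \defeq \maxroot{\shiftoper{\alpha} p}$, so that $x^{*} > \lambda_{1}$ and
\[
  \sum_{i=1}^{d} \frac{1}{x^{*} - \lambda_{i}} \;=\; \frac{1}{\alpha}.
\]
Choose $\mu$ so as to pinch the contributions corresponding to $\lambda_{1}, \lambda_{k}$ into a single doubled pole at $\mu$:
\[
  \frac{2}{x^{*} - \mu} \;=\; \frac{1}{x^{*} - \lambda_{1}} + \frac{1}{x^{*} - \lambda_{k}}.
\]
The strict AM--HM inequality (strict because $\lambda_{1} \neq \lambda_{k}$) applied to the positive numbers $x^{*} - \lambda_{1}$ and $x^{*} - \lambda_{k}$ gives $x^{*} - \lambda_{1} < x^{*} - \mu < x^{*} - \lambda_{k}$, which rearranges to (b). With $Q (x) = \prod_{i \not\in \{1,k\}} (x - \lambda_{i})$ and $\ptil (x) = (x - \mu)^{2} Q (x) \in \rrpoly{d}$, the pinching identity is equivalent to $\cauchy{\ptil}{x^{*}} = \cauchy{p}{x^{*}} = 1/(\alpha d)$, and every root of $\ptil$ is at most $\lambda_{1} < x^{*}$; applying the anchoring identity to $\ptil$ yields $\maxroot{\shiftoper{\alpha} \ptil} = x^{*}$, which is half of (a).

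Set $\phat \defeq p - \ptil$. A one-line expansion,
\[
  (x - \lambda_{1})(x - \lambda_{k}) - (x - \mu)^{2}
  \;=\; (2\mu - \lambda_{1} - \lambda_{k})\, x + (\lambda_{1} \lambda_{k} - \mu^{2}),
\]
shows that $\phat$ is a positive scalar multiple of $(x - \rho) Q (x)$ with $\rho = (\mu^{2} - \lambda_{1} \lambda_{k})/(2\mu - \lambda_{1} - \lambda_{k})$; the leading coefficient $2\mu - \lambda_{1} - \lambda_{k}$ is positive by (b), so $\phat \in \rrpoly{d-1}$ has roots $\rho$ and $\{\lambda_{i} : i \not\in \{1,k\}\}$. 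Now linearity of $\shiftoper{\alpha}$ together with $\shiftoper{\alpha} p (x^{*}) = \shiftoper{\alpha} \ptil (x^{*}) = 0$ gives $\shiftoper{\alpha} \phat (x^{*}) = 0$. Converting back to the Cauchy transform (legitimate once $\phat (x^{*}) \neq 0$, i.e.\ once $\rho \neq x^{*}$) yields $(d - 1) \cauchy{\phat}{x^{*}} = 1/\alpha$; subtracting this from the identity for $p$ produces the key relation
\[
  \frac{1}{x^{*} - \rho} \;=\; \frac{1}{x^{*} - \lambda_{1}} + \frac{1}{x^{*} - \lambda_{k}}.
\]
Both remaining conclusions drop out of this one equation: its right-hand side is positive, so $\rho < x^{*}$ (legitimizing the step above), and it strictly exceeds $1/(x^{*} - \lambda_{1})$, so $\rho > \lambda_{1}$, which is (c). The identification $\maxroot{\shiftoper{\alpha} \phat} = x^{*}$ now follows as for $\ptil$ (since $\rho$ is the max root of $\phat$, and $\cauchy{\phat}{\cdot}$ strictly decreases on $(\rho, \infty)$), completing (a). Finally, if $d \geq 3$ then $Q$ contributes at least one root, which is at most $\lambda_{1} < \rho$, so $\phat$ has at least two distinct roots.

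I expect the main obstacle to be conceptual rather than technical: one has to notice that the correct invariant to preserve under pinching is the Cauchy-transform value at $x^{*}$, not any coefficient of $p$. Once this viewpoint is adopted, $\mu$ is uniquely forced by the harmonic-mean identity, and the ``miraculous'' linking relation between $\rho$ and $\lambda_{1}, \lambda_{k}$ follows for free from the linearity of $\shiftoper{\alpha}$, sparing one from directly manipulating the algebraic formula $\rho = (\mu^{2} - \lambda_{1} \lambda_{k})/(2\mu - \lambda_{1} - \lambda_{k})$.
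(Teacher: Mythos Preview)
Your argument tracks the paper's almost exactly: you define $\mu$ by the same harmonic-mean identity at $x^{*}=\maxroot{\shiftoper{\alpha}p}$, compute $\phat = p-\ptil$ explicitly, and use linearity of $\shiftoper{\alpha}$ to get $(\shiftoper{\alpha}\phat)(x^{*})=0$.  Your route to (c) via the relation
\[
  \frac{1}{x^{*}-\rho}=\frac{1}{x^{*}-\lambda_{1}}+\frac{1}{x^{*}-\lambda_{k}}
\]
is a clean alternative to the paper's direct algebraic check that $(2\mu-\lambda_{1}-\lambda_{k})(\rho-\lambda_{1})=(\mu-\lambda_{1})^{2}$, and it simultaneously gives $\rho<x^{*}$ without computing $x^{*}-\rho$ explicitly.

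There is one genuine slip.  You assert that the leading coefficient $2\mu-\lambda_{1}-\lambda_{k}$ of $\phat$ is positive ``by (b)'', but (b) only says $\lambda_{k}<\mu<\lambda_{1}$, which does \emph{not} force $\mu>(\lambda_{1}+\lambda_{k})/2$.  What you actually need is the AM--HM inequality in its usual form: $x^{*}-\mu$ is the harmonic mean of $x^{*}-\lambda_{1}$ and $x^{*}-\lambda_{k}$, hence strictly less than their arithmetic mean $x^{*}-(\lambda_{1}+\lambda_{k})/2$, giving $2\mu>\lambda_{1}+\lambda_{k}$.  You invoked ``AM--HM'' by name but only extracted from it the weaker fact that a mean lies between the two arguments.  (The paper makes exactly this point as inequality~\eqref{eqn:pinchMuAvg}.)  A second, smaller issue: your justification of $\rho\neq x^{*}$ is circular as written, since you assume it in order to divide by $\phat(x^{*})$ and then deduce it from the resulting key relation.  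This is easily repaired: if $\rho=x^{*}$ then $x^{*}$ is a simple root of $\phat$ (the remaining roots are $\leq\lambda_{1}<x^{*}$), so $\phat'(x^{*})\neq 0$ and $(\shiftoper{\alpha}\phat)(x^{*})=-\alpha\,\phat'(x^{*})\neq 0$, contradicting what you already established.
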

\begin{proof}
Let $t = \maxroot{\shiftoper{\alpha } p}$ and set
\[
  \mu = t - \frac{2}{1/(t - \lambda_{1}) + 1/(t - \lambda_{k})}.
\]
We have chosen $\mu$ so that
\[
  \frac{2}{t - \mu} = \frac{1}{t - \lambda_{1}} + \frac{1}{t - \lambda_{k}}, 
\]
which implies
\[
  \frac{\deriv \ptil (t)}{\ptil  (t)} =
  \frac{\deriv p (t)}{p (t)} = \frac{1}{\alpha},
\]
and thus $\maxroot{\shiftoper{\alpha} \ptil } = t$.
Our choice of $\mu$ also guarantees that 
   $t - \mu$ is the harmonic average of $t - \lambda_{1}$  and $t - \lambda_{k}$.
Thus, $\mu$ must lie strictly between $\lambda_{1}$ and $\lambda_{k}$, which implies part $b$.
As the harmonic mean of distinct numbers is less than their average,
  $t - \mu < (1/2) (2t - (\lambda_{1} + \lambda_{k}))$,
  which implies that 
\begin{equation}\label{eqn:pinchMuAvg}
\mu > (\lambda_{1} + \lambda_{k})/2.
\end{equation}
We have
\[
\phat (x) = p (x) - \ptil (x)
= 
 \left(  (2 \mu - (\lambda_{1} + \lambda_{k} )) x - (\mu^{2} - \lambda_{1} \lambda_{k}) \right)
  \prod_{i \not \in \setof{1,k}} (x-\lambda_{i}).
\]
This and inequality \eqref{eqn:pinchMuAvg} imply that $\phat (x) \in \rrpoly{d-1}$.
As $\shiftoper{\alpha}$ is linear, we also have $(\shiftoper{\alpha} \phat) (t) = 0$.
To finish the proof of part $a$, we need to show that the maximum root of
  $\shiftoper{\alpha } \phat$ is less than $t$.
The one root of $\phat$ that is not a root of $p$ is
\[
 \rho  \defeq  \frac{\mu^{2} - \lambda_{1} \lambda_{k}}{2 \mu - (\lambda_{1} + \lambda_{k} )}.
\]
To see that $t > \rho $, compute
\[
  t- \rho  = \frac{(\lambda_{1} - \mu ) (\mu -\lambda_{k})}{2 \mu - \lambda_{1} - \lambda_{k}},
\]
which we know is greater than $0$ because of \eqref{eqn:pinchMuAvg} and the fact that 
  $\mu$ is between $\lambda_{1}$ and $\lambda_{k}$.
This completes the proof of part $a$.

To prove part $c$, note that $2 \mu - (\lambda_{1} + \lambda_{k}) > 0$, and
\[
  (2 \mu - (\lambda_{1} + \lambda_{k})) (\rho - \lambda_{1})
=
  \mu^{2} - 2 \lambda_{1} \mu  + \lambda_{1}^{2} = (\mu -\lambda_{1})^{2} > 0.
\]
\end{proof}

The following lemma provides one of the facts we exploit about the decomposition $p(x) = \ptil (x) + \phat (x)$

\begin{lemma}\label{lem:max}
  Let $f, g, h$ be real rooted polynomials with positive leading coefficients such that $f = g + h$.
  Then
  \begin{equation}\label{eq:firstmax}
    \maxroot{f} \leq \max \left\{ \maxroot{g}, \maxroot{h} \right\}
  \end{equation}
  with equality if and only if 
  \begin{equation}\label{eq:secondmax}
     \maxroot{f} = \maxroot{g} = \maxroot{h}.
  \end{equation}
\end{lemma}
\begin{proof}
  Note that equality in (\ref{eq:secondmax}) clearly implies equality in (\ref{eq:firstmax}).
  Now, assume by way of contradiction that \eqref{eq:firstmax} is false, and let $x =\maxroot{f}$. 
  Since $g$ and $h$ have positive leading terms, $g(x), h(x) > 0$.
  Thus, $f(x) = g(x) + h(x) > 0$, a contradiction .
\end{proof}

\subsection{Symmetric additive convolution}\label{sec:sqConvBound}

\newcommand{\phata}{\shiftoper{\alpha} \phat}
\newcommand{\ptila}{\shiftoper{\alpha} \ptil}
\newcommand{\pa}{\shiftoper{\alpha} p}
\newcommand{\qa}{\shiftoper{\alpha} q}

\newcommand{\pqa}{\shiftoper{\alpha} (p \sqsum_{d} q)}
\newcommand{\pqta}{\shiftoper{\alpha} (\ptil  \sqsum_{d} q)}
\newcommand{\pqha}{\shiftoper{\alpha} (\phat  \sqsum_{d} q)}

We now prove the upper bound on the $R$-transform of $p \sqsum_d q$ stated in the introduction.

\begin{theorem}\label{thm:sqsumTrans}
For $p,q \in \rrpoly{d}$  and $\alpha >0$,
  \[
    \maxroot{\shiftoper{\alpha} (p \sqsum_{d} q)} + d \alpha
    \leq 
    \maxroot{\shiftoper{\alpha} p}
    +
    \maxroot{\shiftoper{\alpha} q},
  \]
with equality only if $p(x)$ or $q(x)$ has the form $(x-\lambda)^{d}$.
\end{theorem}

Theorem \ref{thm:symmetricAdditive} and Lemma \ref{lem:reduce_degree}
  tell us that if $q (x) \in \rrpoly{d}$ and $p (x) = x^{d-1}$,
  then $p (x) \sqsum_{d-1} q (x) = (1/d) D q (x)$.
As $\maxroot{U_{\alpha} x^{d-1}} = (d-1) \alpha$,
  the following lemma may be viewed as a restriction of Theorem~\ref{thm:sqsumTrans}
  to the case that $q (x) = x^{d-1}$.

\begin{lemma}\label{lem:derivP}
For $\alpha \geq 0$, $d \geq 2$, and $p \in \rrpoly{d}$,
\begin{equation}\label{eqn:derivP}
\maxroot{\shiftoper{\alpha }  D p} \leq \maxroot{\shiftoper{\alpha} p} - \alpha.
\end{equation}
with equality if and only if $p = (x - \lambda)^d$ for some real number $\lambda$.
\end{lemma}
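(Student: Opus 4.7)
I would first reduce the lemma to an analytic inequality on the barrier function $p'(x)/p(x)$. Since $\shiftoper{\alpha} Dp$ is real rooted with positive leading coefficient ($Dp$ being real rooted and $\shiftoper{\alpha}$ preserving this), the stated bound is equivalent to $(\shiftoper{\alpha} Dp)(t - \alpha) \geq 0$ with $t := \maxroot{\shiftoper{\alpha} p}$. The defining relation $p'(t)/p(t) = 1/\alpha$, combined with positivity of each summand in $\sum 1/(t - \lambda_i) = 1/\alpha$, forces $1/\alpha \geq 1/(t - \lambda_1)$, so $s := t - \alpha \geq \lambda_1 = \maxroot{p} \geq \maxroot{Dp}$, and the ratio $p''(s)/p'(s)$ is well defined. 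The claim then reduces to the barrier inequality $p''(s)/p'(s) \leq 1/\alpha$.

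My first attempt would be a Pinching-based induction on $d$. For the base case $p(x) = (x - \lambda)^d$, direct computation gives $\maxroot{\shiftoper{\alpha} p} = \lambda + d\alpha$ and $\maxroot{\shiftoper{\alpha} Dp} = \lambda + (d-1)\alpha$, so the inequality holds with equality. Otherwise, Lemma~\ref{lem:pinch} provides a decomposition $p = \phat + \ptil$ with $\phat \in \rrpoly{d-1}$, $\ptil$ having a double root, and both $\maxroot{\shiftoper{\alpha} \phat} = \maxroot{\shiftoper{\alpha} \ptil} = t$. Induction immediately handles $\phat$, so $\maxroot{\shiftoper{\alpha} D\phat} \leq t - \alpha$. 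If one could also show $\maxroot{\shiftoper{\alpha} D\ptil} \leq t - \alpha$, then $\shiftoper{\alpha} Dp = \shiftoper{\alpha} D\phat + \shiftoper{\alpha} D\ptil$ would be nonnegative on $(t - \alpha, \infty)$ (both summands being so), and being real rooted with positive leading coefficient its maxroot would be $\leq t - \alpha$. The main obstacle is $\ptil$: it has degree $d$ so induction on degree does not directly apply, and iterating Pinching on $\ptil$ does not obviously terminate.

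A clean alternative route avoids this obstacle. Two logarithmic differentiations of $p$ yield the identity $p''/p' = p'/p - R/(p'/p)$ with $R(s) := \sum 1/(s - \lambda_i)^2$, so the target $p''(s)/p'(s) \leq 1/\alpha$ becomes $R(s) \geq (p'(s)/p(s))(p'(s)/p(s) - 1/\alpha)$. Substituting $v_i := \alpha/(s - \lambda_i) > 0$ and $V := \sum v_i$, the constraint $p'(s + \alpha)/p(s + \alpha) = 1/\alpha$ reads $\sum v_i/(1 + v_i) = 1$, equivalently $\sum v_i^2/(1 + v_i) = V - 1$, and the desired inequality collapses to $\sum v_i^2 \geq V(V - 1)$. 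This follows from Cauchy--Schwarz:
\[
(V - 1)^2 = \Bigl(\sum \tfrac{v_i^2}{1 + v_i}\Bigr)^2 \leq \Bigl(\sum \tfrac{v_i^3}{1 + v_i}\Bigr) \Bigl(\sum \tfrac{v_i}{1 + v_i}\Bigr) = \sum \tfrac{v_i^3}{1 + v_i},
\]
combined with $\sum v_i^2 = \sum v_i^2/(1 + v_i) + \sum v_i^3/(1 + v_i) \geq (V - 1) + (V - 1)^2 = V(V - 1)$. The key subtlety is that the naive Cauchy--Schwarz bound $\sum v_i^2 \geq V^2/d$ is too weak, since Jensen applied to the constraint forces $V \geq d/(d - 1)$; the weighted Cauchy--Schwarz above---tuned so that the factor $\sum v_i/(1 + v_i)$ collapses to $1$---is the essential step.
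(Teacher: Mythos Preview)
Your alternative route is correct. The reduction to the barrier inequality $p''(s)/p'(s)\le 1/\alpha$ at $s=t-\alpha$ is valid because you first establish $s>\lambda_1\ge\maxroot{Dp}$ (using $d\ge2$ so that the sum $\sum 1/(t-\lambda_i)=1/\alpha$ has at least two positive terms), and on $(\maxroot{Dp},\infty)$ the function $p''/p'$ is strictly decreasing; hence $p''(s)/p'(s)\le 1/\alpha$ is equivalent to $\maxroot{\shiftoper{\alpha}Dp}\le s$. Your substitution $v_i=\alpha/(s-\lambda_i)$, the identity $\sum v_i^2/(1+v_i)=V-1$ from the constraint $\sum v_i/(1+v_i)=1$, and the weighted Cauchy--Schwarz $(V-1)^2\le\sum v_i^3/(1+v_i)$ combine cleanly to give $\sum v_i^2\ge V(V-1)$. (Equality in Cauchy--Schwarz forces all $v_i$ equal, recovering the equality case $p=(x-\lambda)^d$.) The only omission is the trivial case $\alpha=0$, where the claim is just $\maxroot{Dp}\le\maxroot{p}$.

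This is a genuinely different argument from the paper's. The paper proceeds by induction on $d$ combined with a compactness/extremal argument: one fixes $R$, takes $p$ minimizing $\phi(p)=\maxroot{\shiftoper{\alpha}p}-\alpha-\maxroot{\shiftoper{\alpha}Dp}$ over roots in $[-R,R]$, and derives a contradiction from $\phi(p)<0$. The Pinching Lemma produces $\phat$ (degree $d-1$) and $\ptil$; induction applied to $\phat$ forces $(\shiftoper{\alpha}D\phat)(\beta')>0$, and linearity then gives $(\shiftoper{\alpha}D\ptil)(\beta')<0$, so $\ptil$ violates the bound \emph{more} than $p$ does, contradicting minimality. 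Your first attempt was close to this but misread the role of $\ptil$: the paper never bounds $\ptil$ directly nor iterates pinching; it uses $\ptil$ only to contradict extremality of $p$.

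What each buys: the paper's pinching-plus-compactness template is reusable and indeed drives all three transform bounds in the paper, including cases where a closed-form barrier inequality is less accessible. Your approach is more elementary and self-contained for this particular lemma, avoids compactness and induction, and transparently identifies the equality case.
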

\begin{proof}
If $p = (x-\lambda)^{d}$, then $\maxroot{\shiftoper{\alpha} p} = \lambda + d\alpha$
  and $\maxroot{\shiftoper{\alpha} \deriv p} = \lambda + (d-1)\alpha$, giving equality in 
  \eqref{eqn:derivP}.

We now prove the rest of the lemma by induction on $d$, with $d = 2$ being the base case.
To establish the lemma in the case that $d = 2$ and $p$ has roots $\lambda_{1} > \lambda_{2}$,
  note that 
\[ 
  r \defeq \maxroot{\shiftoper{\alpha} \deriv p} + \alpha = 2 \alpha + (\lambda_{1} + \lambda_{2})/2
\]
and
\[
  \shiftoper{\alpha} p(x) = x^2 - (\lambda_{1} + \lambda_{2} + 2 \alpha) x + \alpha (\lambda_{1} + \lambda_{2}) 
    + \lambda_{1} \lambda_{2}.
\]
As this polynomial has a positive leading term, the fact that $\maxroot{ \shiftoper{\alpha} p } > r$
  follows from
  \[
    \shiftoper{\alpha} p(r) = - (\lambda_{1} - \lambda_{2})^2 / 4 < 0.
  \]

For a real rooted polynomial $p$, define
\[
  \phi (p) = \maxroot{\shiftoper{\alpha} p} - \alpha - \maxroot{\shiftoper{\alpha } \deriv p} .
\]
We will prove by induction on $d$ that $\phi(p) > 0$ for all polynomials $p \in \rrpoly{d}$ that have more than one root.
  
Assume by way of contradiction that there exists a monic (without loss of generality, since $\phi$ is independent of scaling) polynomial $p \in \rrpoly{d}$ with at least two distinct roots
  for which $\phi(p) \leq 0$.
Let $[-R, R]$ be an interval containing all of the roots of $p$,
  and define $\rrpoly{d}[-R, R]$ to be all monic polynomials in $\rrpoly{d}$ with all roots in this interval.
Since $[-R,R]^{d}$ is a compact set and $\phi$ is a continuous function of the roots of $p$, there is a monic polynomial $p_0 \in \rrpoly{d}[-R, R]$ at which $\phi$ obtains its minimum.
Let $p_0$ be such a polynomial, so $\phi(p_0) \leq \phi(p) \leq 0$. 
We may assume that $p_0$ has at least two distinct roots, 
  because it is true if $\phi (p_0) < 0$ whereas if $\phi (p_0) = 0$ we may assume $p_0 = p$. 
  
Lemma~\ref{lem:pinch} implies that there exist polynomials $\phat$ and $\ptil$ with $p_0 = \phat + \ptil$ such that 
  \begin{enumerate}
    \item $\phat$ and $\ptil$ have positive first coefficients
    \item $\phat$ has degree $d-1$, and if $d \geq 3$ it has at least two distinct roots.
    \item $\ptil \in \rrpoly{d}[-R, R]$.
    \item $\maxroot{\shiftoper{\alpha} \ptil } = \maxroot{\shiftoper{\alpha}  \phat } = \maxroot{\shiftoper{\alpha } p_0}$
  \end{enumerate}
By linearity we have 
  \[
    {\shiftoper{\alpha} \deriv \ptil } + {\shiftoper{\alpha}   \deriv \phat } = {\shiftoper{\alpha } \deriv p_0}.
  \]
If $\maxroot{ \shiftoper{\alpha}   \deriv p_0 } \leq \maxroot{ \shiftoper{\alpha}   \deriv \phat }$, 
then
\begin{align*}
 \phi (p_0)  
  & = \maxroot{\shiftoper{\alpha} p_0} - \alpha - \maxroot{\shiftoper{\alpha } \deriv p_0} 
  \\
  & \geq \maxroot{\shiftoper{\alpha} \phat} - \alpha - \maxroot{\shiftoper{\alpha } \deriv \phat} 
  \\  
  & = \phi(\phat) 
  \\
  & > 0,
 \end{align*}
by the inductive hypothesis, as $\phat$ has degree $d-1$.
As this would contradict our assumption that $\phi (p) \leq 0$, we may assume
 $\maxroot{ \shiftoper{\alpha}   \deriv p_0 } > \maxroot{ \shiftoper{\alpha}   \deriv \phat }$
  and apply Lemma~\ref{lem:max} to conclude
  $\maxroot{ \shiftoper{\alpha} \deriv p_0 } < \maxroot{ \shiftoper{\alpha}   \deriv \ptil }$.
This implies
\begin{align*}
  \phi (p_0)  
   & = \maxroot{\shiftoper{\alpha} p_0} - \alpha - \maxroot{\shiftoper{\alpha } \deriv p_0} 
   \\
   & > \maxroot{\shiftoper{\alpha} \phat} - \alpha - \maxroot{\shiftoper{\alpha } \deriv \ptil} 
   \\  
   & = \phi(\ptil) ,
\end{align*}
contradicting the minimality of $p_0$.
Thus, we may conclude that $\phi (p) > 0$ for all $p \in \rrpoly{d}$ with at least two roots.

\end{proof}



\begin{lemma}\label{lem:sqSumOne}
For $\alpha \geq 0$, $q = (x- \lambda)^{d}$ for some real $\lambda$ and $p \in \rrpoly{d}$,
\[
  \maxroot{\shiftoper{\alpha} (p \sqsum_{d} q)}
 = 
  \maxroot{\shiftoper{\alpha} p} + 
  \maxroot{\shiftoper{\alpha} q} - \alpha d.
\]
\end{lemma}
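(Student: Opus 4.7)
The plan is to reduce everything to the simple observation that when $A = \lambda \II$, the matrix sum $A + Q B Q^T$ is just a translation of $Q B Q^T$, which trivially has the same characteristic polynomial as $B$. So the first (and essentially only) substantive step will be to establish the identity
\[
(x-\lambda)^d \sqsum_d q(x) = q(x-\lambda).
\]
I would derive this from the random matrix definition \eqref{def:sqsum}: taking $A = \lambda \II$, we have $A + Q B Q^T = \lambda \II + Q B Q^T$ for every orthonormal $Q$, so
\[
\charp{A + Q B Q^T}{x} = \det\bigl((x-\lambda)\II - Q B Q^T\bigr) = \charp{B}{x-\lambda} = q(x-\lambda),
\]
using that conjugation by $Q$ preserves the characteristic polynomial. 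Since this quantity does not depend on $Q$, taking the expectation leaves it unchanged. (If one prefers an algebraic derivation, the coefficient formula \eqref{eqn:symmetricSum} also yields $q(x-\lambda)$ after substituting $a_i = \binom{d}{i}\lambda^i$ and simplifying $\frac{(d-i)!(d-j)!}{d!(d-k)!}\binom{d}{i} = \binom{d-j}{i}$.)

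Next I would observe that the operator $\shiftoper{\alpha}$ commutes with translation, since it is the polynomial differential operator $1 - \alpha \deriv$. Explicitly, $\shiftoper{\alpha}[q(x-\lambda)] = (\shiftoper{\alpha} q)(x-\lambda)$, so
\[
\maxroot{\shiftoper{\alpha}(p \sqsum_d q)} = \maxroot{(\shiftoper{\alpha} q)(x-\lambda)} = \lambda + \maxroot{\shiftoper{\alpha} q}.
\]
I would then compute the two remaining pieces directly: $\shiftoper{\alpha} p = (x-\lambda)^{d} - \alpha d (x-\lambda)^{d-1} = (x-\lambda)^{d-1}(x - \lambda - \alpha d)$, so $\maxroot{\shiftoper{\alpha} p} = \lambda + \alpha d$ (using $\alpha \geq 0$).

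Combining the two displays yields
\[
\maxroot{\shiftoper{\alpha}(p \sqsum_d q)} = \lambda + \maxroot{\shiftoper{\alpha} q} = (\lambda + \alpha d) + \maxroot{\shiftoper{\alpha} q} - \alpha d = \maxroot{\shiftoper{\alpha} p} + \maxroot{\shiftoper{\alpha} q} - \alpha d,
\]
which is the claim. There is no real obstacle here: the lemma is essentially the statement that the convolution by a trivial (scalar) polynomial is a translation, together with the routine calculation of the maximum root of $\shiftoper{\alpha}(x-\lambda)^{d}$. The only thing to double-check is the sign/translation bookkeeping in the identity $(x-\lambda)^d \sqsum_d q(x) = q(x-\lambda)$, which is transparent from the random matrix viewpoint.
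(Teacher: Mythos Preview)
Your proof is correct and follows essentially the same approach as the paper: both derive $(x-\lambda)^d \sqsum_d q(x) = q(x-\lambda)$ directly from the random-matrix definition \eqref{def:sqsum} with $A = \lambda \II$, then use $\maxroot{\shiftoper{\alpha} p} = \lambda + \alpha d$ to conclude. The paper's argument is just a terser version of yours.
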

\begin{proof}
We can prove this either by manipulating the identity in \eqref{eqn:symmetricSum} and those following it,
 or by going through the defintion \eqref{def:sqsum}.
To pursue the latter route, let $A$ be a Hermitian matrix whose characteristic polynomials is $p$ and let $B = \lambda I$.
We then have
\[
  p (x) \sqsum_{d} q (x) = \expec{Q} \charp{A + \lambda I}{x}
   = p(x - \lambda).
\]
Thus, 
\[
\maxroot{\shiftoper{\alpha} (p \sqsum_{d} q)} 
 = \lambda + \maxroot{\shiftoper{\alpha} p} .
\]
On the other hand, $\maxroot{\shiftoper{\alpha} q} = \lambda + \alpha d$.
\end{proof}

\begin{lemma}\label{lem:deriv_oneroot}
If $p \in \rrpoly{d}$ for $d \geq 3$ and $D p$ has just one root,
  then $p$ has just one root.
\end{lemma}
\begin{proof}
If $D p = (x - \lambda)^{d-1}$, then $p$ can be written in the form
  $(x - \lambda)^{d} + c$ for some constant $c$.
If $d \geq 3$ and $c$ were a constant other than zero, then this polynomial
  would have at least two complex roots.
\end{proof}

Our proof of Theorem~\ref{thm:sqsumTrans} will be very similar to our proof
  of Lemma \ref{lem:derivP}.

\begin{proof}[Proof of Theorem~\ref{thm:sqsumTrans}]
Lemma \ref{lem:sqSumOne} proves the theorem in the case that
  either $p$ or $q$ can be written in the form $(x - \lambda)^d$.
So, we will prove that if neither $p$ nor $q$ is of the form
$(x - \lambda)^d$ then
\begin{equation}\label{eqn:sqsumTrans2}
    \maxroot{\shiftoper{\alpha} (p \sqsum_{d} q)} + d \alpha
    <
    \maxroot{\shiftoper{\alpha} p}
    +
    \maxroot{\shiftoper{\alpha} q}.
\end{equation}
We will prove this by induction on $d$, the maximum degree of $p$ and $q$.
The base case $d=1$ is handled by Lemma \ref{lem:sqSumOne}.
Assume $d\ge 2$ and fix a polynomial $q\in\rrpoly{d}$ with at least two roots.
For any polynomial $p$ in $\rrpoly{d}$, define
  \[
    \phi (p) = \maxroot{\shiftoper{\alpha} p} + \maxroot{\shiftoper{\alpha} q} - d \alpha - 
    \maxroot{\shiftoper{\alpha } (p \sqsum_{d} q)} .
  \]
As before, assume for contradiction that there exists a monic polynomial $p$ with at least two roots for which $\phi(p) \leq 0$.
Let $[-R, R]$ be an interval 
  containing the roots of $p$ and let $p_0$ minimize $\phi$ over all monic polynomials 
  whose roots are contained in this interval.
We may assume that $p_0$ has at least two roots because Lemma \ref{lem:sqSumOne} says it must if $\phi(p_0) < 0$, 
  and otherwise we may take $p_0 = p$.

Thus, we can apply Lemma~\ref{lem:pinch} to $p_0$ to obtain 
 polynomials $\phat \in \rrpoly{d-1}$ and $\ptil \in \rrpoly{d}$ such that
 \begin{itemize}
  \item [a.] $p_0 = \phat + \ptil$, which by the linearity of $\sqsum_d$ and $\shiftoper{\alpha}$ implies 
  \[
    \shiftoper{\alpha} p_0 \sqsum_d q = \shiftoper{\alpha} \phat \sqsum_d q + \shiftoper{\alpha} \ptil \sqsum_d q;
    \]
  \item [b.] the roots of $\ptil$ lie inside $[-R, R]$, and so $\phi (\ptil) \geq \phi (p_0)$; and
  \item [c.]  $\maxroot{\shiftoper{\alpha} \ptil } = \maxroot{\shiftoper{\alpha}  \phat } = \maxroot{\shiftoper{\alpha } p_0}$.
 \end{itemize}
By Lemma \ref{lem:reduce_degree}

\[
  \maxroot{\shiftoper{\alpha} \phat \sqsum_d q} =  \maxroot{\shiftoper{\alpha} \phat \sqsum_{d-1} D q}.
\]
As the degree of $D q$ is less than $d$, and $Dq$ has at least two distinct roots by Lemma \ref{lem:deriv_oneroot} we may apply our inductive hypothesis to conclude that
\begin{align*}
  \maxroot{\shiftoper{\alpha} \phat \sqsum_{d-1} D q}
  & \leq
  \maxroot{\shiftoper{\alpha} \phat}
  +
  \maxroot{\shiftoper{\alpha} D q}
  - (d-1) \alpha  
 \\ & <
  \maxroot{\shiftoper{\alpha} \phat}
  +
  \maxroot{\shiftoper{\alpha} q}
	- d \alpha  & \text{by Lemma \ref{lem:derivP}}
  \\ & =
  \maxroot{\shiftoper{\alpha} p_0}
  +
  \maxroot{\shiftoper{\alpha} q}
	- d \alpha  & \text{by property (c)}
  \\ & \leq   \maxroot{\shiftoper{\alpha} p_0 \sqsum_{d} q},
\end{align*}
as $\phi (p_0) \leq 0$.
Thus, property $(a)$ above and Lemma \ref{lem:max} imply that
\[
  \maxroot{\shiftoper{\alpha} \ptil \sqsum_{d} q}
  > 
  \maxroot{\shiftoper{\alpha} p_0 \sqsum_{d} q}.
\]
As $\maxroot{\shiftoper{\alpha} \ptil} = \maxroot{\shiftoper{\alpha} p_0}$, 
  this implies $\phi (\ptil) < \phi (p_0)$, a contradiction.
Thus, \eqref{eqn:sqsumTrans2} holds when both polynomials
  have at least two roots.

\end{proof}

\subsection{Symmetric multiplicative  convolution}\label{sec:multTrans}

\begin{theorem}\label{thm:multTransBound}
	For $p, q\in \rrpos{d}$ having only nonnegative real roots and $w > 0$,
  \[
    \strans{p \sqmult_{d} q}{w} \leq \strans{p}{w} \strans{q}{w},
  \]
  with equality only when $p$ or $q$ has only one distinct root.
\end{theorem}

We begin by considering the case in which $p = (x-\lambda)^{d}$.
We then have that
\[
\mtrans{p}{z} = \sum_{j \geq 1} (\lambda /z)^{j}
= 
\frac{\lambda}{z - \lambda}.
\]
Thus,
\[
\invmtrans{p}{w} = \frac{1+w}{w} \lambda ,
\]
and
\[
\strans{p}{w} = \lambda .
\]

\begin{lemma}\label{lem:multTransOne}
If $\lambda > 0$, $p (x) = (x-\lambda)^{d}$ and $q (x) \in \rrpos{d}$, then for all $w \geq 0$ 
\[
\strans{p \sqmult_{d} q}{w}
=
\strans{p}{w}
\strans{q}{w}.
 \]
\end{lemma}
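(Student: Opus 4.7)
The plan is to observe that when $p(x) = (x-\lambda)^d$, the multiplicative convolution $p \sqmult_d q$ is simply a rescaling of $q$, and then to track how the $M$-transform, its inverse, and the $S$-transform behave under this rescaling.

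First I would apply Theorem~\ref{thm:symmetricMult}. Since $p(x) = (x-\lambda)^d = \sum_i x^{d-i} (-1)^i \binom{d}{i} \lambda^i$, we have $a_i = \binom{d}{i} \lambda^i$, so the $\binom{d}{i}$ factors cancel and
\[
  p(x) \sqmult_d q(x) = \sum_{i=0}^d x^{d-i} (-1)^i \lambda^i b_i = \lambda^d\, q(x/\lambda).
\]
Thus the roots of $p \sqmult_d q$ are exactly $\lambda \mu_1, \dots, \lambda \mu_d$, where $\mu_1,\dots,\mu_d$ are the roots of $q$. Since $\lambda > 0$ and $q \in \rrpos{d}$, these roots are nonnegative, so all the transforms are well defined.

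Next I would compute, directly from the definition of $\cauchy{\cdot}{\cdot}$,
\[
  \cauchy{p \sqmult_d q}{z} = \frac{1}{d}\sum_{i=1}^d \frac{1}{z - \lambda \mu_i} = \frac{1}{\lambda}\, \cauchy{q}{z/\lambda},
\]
and hence
\[
  \mtrans{p \sqmult_d q}{z} = z\,\cauchy{p \sqmult_d q}{z} - 1 = \frac{z}{\lambda}\,\cauchy{q}{z/\lambda} - 1 = \mtrans{q}{z/\lambda}.
\]
Inverting, $\invmtrans{p \sqmult_d q}{w} = \lambda \cdot \invmtrans{q}{w}$ (the largest $z$ with $\mtrans{p \sqmult_d q}{z} = w$ is $\lambda$ times the largest $z'$ with $\mtrans{q}{z'} = w$).

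Finally, multiplying by $w/(w+1)$ gives $\strans{p \sqmult_d q}{w} = \lambda \cdot \strans{q}{w}$, and the excerpt already records $\strans{p}{w} = \lambda$ for $p = (x-\lambda)^d$. This yields the desired equality. The only mild care needed is to justify that the ``largest $z$'' in the definition of $\invmtrans{\cdot}{\cdot}$ indeed scales by $\lambda$ under a positive rescaling of the roots, but this is immediate because $z \mapsto z/\lambda$ is an increasing bijection of $(0,\infty)$ for $\lambda > 0$. There is no real obstacle here; this lemma is essentially the base case of Theorem~\ref{thm:multTransBound} and reduces to the scaling identity $p \sqmult_d q(x) = \lambda^d q(x/\lambda)$.
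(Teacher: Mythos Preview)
Your proof is correct and follows essentially the same approach as the paper: compute $p \sqmult_d q = \lambda^d q(x/\lambda)$ via Theorem~\ref{thm:symmetricMult}, observe that the $\widetilde{\mathcal{M}}$-transform scales accordingly, and combine with the precomputed $\strans{p}{w}=\lambda$. You have simply spelled out the intermediate Cauchy-transform and inversion steps in more detail than the paper does.
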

\begin{proof}
For $p (x) = (x-\lambda)^{d}$, one may use either the definition \eqref{def:sqmult} or Theorem \ref{thm:symmetricMult}
  to compute
\[
  p (x) \sqmult_{d} q (x) = \lambda^{d} q (x/\lambda).
\]
As, $\mtrans{q (x/\lambda)}{\lambda z} = \mtrans{q (x)}{z}$, 
\[
  \strans{p (x) \sqmult_{d} q (x)}{w}
 = \lambda \strans{q (x)}{w} .
\]
\end{proof}

The finite multiplicative convolution of polynomials of different degrees
  may be computed by taking the \textit{polar derivative with respect to 0} of the polynomial of higher degree.
We recall that the polar derivative at 0 of a polynomial $p$ of degree $d$ is given by
  $d p - x \deriv p$ (see \cite[p. 44]{Marden}).

\begin{lemma}[Degree Reduction for $\sqmult_d$]\label{lem:multdegm1}
For $p (x) \in \rrpoly{d}$ and $q (x) \in \rrpoly{k}$, for $k < d$,
\[
  p (x) \sqmult_{d} q (x) = \frac{1}{d} (d p (x) - x \deriv  p (x) ) \sqmult_{d-1} q (x).
\]
\end{lemma}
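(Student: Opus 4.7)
The plan is to prove the lemma by comparing coefficients on both sides, via an extension of the formula of Theorem~\ref{thm:symmetricMult} to polynomials of unequal degrees. Write $p(x) = \sum_{i=0}^{d} x^{d-i}(-1)^i a_i$. A routine expansion gives $x\deriv p - d p = -\sum_{i=0}^{d} i\, x^{d-i}(-1)^{i} a_i$, so that
\[
\tilde p(x) := \tfrac{1}{d}\bigl(x\deriv p(x) - d p(x)\bigr) = \sum_{j=0}^{d-1} x^{d-1-j}(-1)^{j} \tilde a_j, \qquad \tilde a_j = \tfrac{(j+1) a_{j+1}}{d}.
\]
Thus $\tilde p$ has degree $d-1$, so the RHS of the lemma is the multiplicative convolution at level $d-1$ of $\tilde p$ with $q \in \rrpoly{k}$, $k \leq d-1$.

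The crux is the right extension of Theorem~\ref{thm:symmetricMult} to mixed degrees $D \geq k$: for $r \in \rrpoly{D}$ with coefficients $r_j$ and $q \in \rrpoly{k}$ with coefficients $b_j$, declare
\[
r \sqmult_{D} q = \sum_{i=0}^{k} x^{k-i}(-1)^i \,\frac{r_{i+D-k}\, b_i}{\binom{D}{i+D-k}},
\]
a polynomial of degree $k$. This is the convention consistent with Lemma~\ref{lem:multTransOne}: for $r=(x-\lambda)^{D}$ it collapses to $\lambda^{D} q(x/\lambda)$, matching the expected scaling. Applying this rule to both sides of the lemma, the $(-1)^i x^{k-i}$-coefficient of the LHS equals $a_{i+d-k}\, b_i/\binom{d}{i+d-k}$, while that of the RHS equals $\tilde a_{i+d-1-k}\, b_i/\binom{d-1}{i+d-1-k} = (i+d-k)\,a_{i+d-k}\, b_i \big/ \bigl(d \binom{d-1}{i+d-1-k}\bigr)$.

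Equating these two expressions for each $i \in \{0,\dots,k\}$ reduces the entire identity to the single combinatorial identity
\[
\frac{i+d-k}{d\binom{d-1}{i+d-1-k}} = \frac{1}{\binom{d}{i+d-k}},
\]
which is just a rewriting of $m\binom{d}{m} = d \binom{d-1}{m-1}$ with $m = i+d-k$. The main obstacle, such as it is, is bookkeeping: committing to the mixed-degree convention so that both sides of the lemma are compared as degree-$k$ polynomials. Once that is in place the entire proof is a one-line binomial-coefficient identity, and no analytic or topological input is needed.
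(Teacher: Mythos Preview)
Your argument is correct and is exactly the ``elementary computation from Theorem~\ref{thm:symmetricMult}'' that the paper alludes to: compare coefficients on both sides using the formula $p \sqmult_{d} q = \sum_i x^{d-i}(-1)^i a_i b_i/\binom{d}{i}$, and reduce to the identity $m\binom{d}{m}=d\binom{d-1}{m-1}$. The one point worth noting is that your mixed-degree convention is not something you need to \emph{declare}; it is forced by reading $q\in\rrpoly{k}$ as a polynomial of degree at most $d$ (with leading coefficients zero) and applying Theorem~\ref{thm:symmetricMult} verbatim, which yields precisely your formula $\sum_{i=0}^{k} x^{k-i}(-1)^i a_{i+d-k} b_i/\binom{d}{i+d-k}$.
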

\begin{proof}
	Follows from equation \eqref{eqn:sqmultcompact} by an elementary computation.
\end{proof}

Let $R$ be the operation on polynomials in $\rrpos{d}$ that maps $p (x)$ to $x^{d} p (1/x)$.
The polar derivative of a degree $d$ polynomial may be expressed in terms of $R$ by
\[
  d p - x \deriv p = R \deriv R p.
\]

\begin{claim}\label{clm:polarDeriv}
For $p = (x-\lambda)^{d}$,
\[
 x \deriv p -  d p = \lambda d (x- \lambda)^{d-1}.
\]
For $p \in \rrpos{d}$, $(x D - d)p \in \rrpos{d-1}$ and
\begin{equation}\label{eqn:polarDeriv}
	\maxroot{p} \geq \maxroot{x \deriv p - d p}.
\end{equation}
	In the special case of $p\in\rrpos{2}$ with distinct roots, strict inequality holds:
\begin{equation}\label{eqn:quadraticMultStrict}
	\maxroot{p} > \maxroot{x\deriv p - 2p}.
\end{equation}

\end{claim}
\begin{proof}
The first part is a simple calculation.
Inequality \eqref{eqn:polarDeriv} follows from the fact that $p \in \rrpos{d}$ implies that $R p \in \rrpos{d}$
   and the fact that the roots of $\deriv R p$ interlace those of $R p$.
To see that $(x D - d)p \in \rrpos{d-1}$, observe that its lead coefficient is positive.

The last claim follows by noting that $R p$ is quadratic polynomial with distinct roots and so $D R p$ strictly interlaces $R p$.
\end{proof}

As we did with the symmetric additive convolution, we relate the
  $\widetilde{\mathcal{M}}$-transform to the maximum root of a polynomial.
We have
\[
\mtrans{p}{z} = w
\quad \iff \quad 
\maxroot{\left(1 - \frac{x D}{d (w+1)} \right) p (x)} = 0.
\]
We therefore define the operator $\multoper{w}$ by
\[
  \multoper{w} p (x)= \left( 1 - \frac{x \deriv }{d (w+1)} \right) p (x),
\]
which gives
\[
\mtrans{p}{z} = w
\quad \iff \quad 
\maxroot{\multoper{w} p} = z.
\]
Note that the polar derivative is $d \multoper{0}$.

Our proof of Theorem \ref{thm:multTransBound} will also employ the following consequence
  of Lemma~\ref{lem:pinch}.

\begin{corollary}\label{cor:multPinch}
Let $w > 0$, $d \geq 2$, and let 
  $p (x) \in \rrpos{d}$ have at least two distinct roots.
Then there exist $\ptil \in \rrpos{d}$ and $\phat \in \rrpos{d-1}$
  so that 
  $p (x) = \phat (x) + \ptil (x)$,
  the largest root of $\ptil$ is at most the largest root of $p$, 
\[
\maxroot{\multoper{w} p}
=
\maxroot{\multoper{w} \ptil}
=
\maxroot{\multoper{w} \phat},
\]
and if $d \geq 3$ then $\phat$ has two distinct roots. 
\end{corollary}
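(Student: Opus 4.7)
The plan is to reduce to Lemma~\ref{lem:pinch} via the elementary observation that for any polynomial $q$ and any $t>0$,
\[
\multoper{w} q(t) = 0 \iff q'(t)/q(t) = d(w+1)/t \iff \shiftoper{\alpha} q(t) = 0, \qquad \text{where } \alpha := t/(d(w+1)).
\]
Since $d$ is the fixed parameter in the definition of $\multoper{w}$ (and not the degree of $q$), this correspondence allows me to apply the additive pinching from Lemma~\ref{lem:pinch} to both $p,\ptil$ of degree $d$ and to $\phat$ of degree $d-1$, and in each case immediately translate the conclusion into a multiplicative statement.

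Set $t := \maxroot{\multoper{w} p}$ and $\alpha := t/(d(w+1)) > 0$; then $t > \lambda_1$ and the identity above gives $t = \maxroot{\shiftoper{\alpha} p}$. Applying Lemma~\ref{lem:pinch} with this $\alpha$ yields $\mu, \rho$ and
\[
\ptil(x) = (x-\mu)^2 \prod_{i \notin \{1,k\}}(x-\lambda_i), \qquad \phat(x) = (x-\rho)\prod_{i \notin \{1,k\}}(x-\lambda_i).
\]
Since $p \in \rrpos{d}$ gives $\lambda_k \geq 0$, parts~(b) and~(c) yield $\mu > \lambda_k \geq 0$ and $\rho > \lambda_1 > 0$, so $\ptil \in \rrpos{d}$ and $\phat \in \rrpos{d-1}$. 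Because $\mu < \lambda_1$ and every other root of $\ptil$ equals some $\lambda_i$ with $i \geq 2$, we get $\maxroot{\ptil} \leq \max(\mu,\lambda_2) \leq \lambda_1 = \maxroot{p}$.

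Part~(a) of Lemma~\ref{lem:pinch} together with the identity above immediately gives $\multoper{w}\ptil(t) = \multoper{w}\phat(t) = 0$. For $\ptil$, the function $\mtrans{\ptil}{x}$ strictly decreases from $+\infty$ to $0$ on $(\maxroot{\ptil}, \infty)$ (using nonnegativity of the roots), so $\mtrans{\ptil}{x} = w$ has a unique solution above $\maxroot{\ptil}$, which must be $t$; hence $\maxroot{\multoper{w}\ptil}=t$. For $\phat$, linearity of $\multoper{w}$ gives $\multoper{w}\phat = \multoper{w}p - \multoper{w}\ptil$, so $t$ is a root. On $(\rho, \infty) = (\maxroot{\phat}, \infty)$, the function $x\phat'(x)/\phat(x) = (d-1) + \sum \eta/(x-\eta)$, where the sum runs over the nonnegative roots $\eta$ of $\phat$, strictly decreases from $+\infty$ to $d-1 < d(w+1)$, so the equation $x\phat'(x)/\phat(x) = d(w+1)$ (equivalent to $\multoper{w}\phat(x)=0$) has a unique solution in $(\rho, \infty)$; this solution is $t$.

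The main obstacle I anticipate is the careful analysis of $\phat$, since its degree $d-1$ differs from the parameter $d$ in $\multoper{w}$; the clean correspondence $\maxroot{\multoper{w}q} = \invmtrans{q}{w}$ that applied to $\ptil$ fails here, and one must reason through $x\phat'/\phat$ directly as above rather than going through $\mtrans{\phat}$. Strictly speaking, the claim $\maxroot{\multoper{w}\phat} = t$ implicitly uses real rootedness of $\multoper{w}\phat$; this can be verified via P\'olya--Schur applied to the multiplier sequence $\gamma_n = 1 - n/(d(w+1))$, whose exponential generating function $e^x(1-x/(d(w+1)))$ lies in the Laguerre--Polya class.
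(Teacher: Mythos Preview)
Your proposal is correct and follows essentially the same approach as the paper: set $t = \maxroot{\multoper{w} p}$, choose $\alpha = t/(d(w+1))$ so that $\shiftoper{\alpha}$ and $\multoper{w}$ coincide at $x=t$, and invoke Lemma~\ref{lem:pinch}. The paper's own proof is extremely terse and simply asserts the chain of equalities $\maxroot{\shiftoper{\alpha}\phat}=\maxroot{\shiftoper{\alpha}\ptil}=t=\maxroot{\multoper{w}\ptil}=\maxroot{\multoper{w}\phat}$; you go further and explicitly verify the side conditions ($\ptil\in\rrpos{d}$, $\phat\in\rrpos{d-1}$, the bound on $\maxroot{\ptil}$) and, more importantly, argue carefully via monotonicity of $x q'(x)/q(x)$ on $(\maxroot{q},\infty)$ that $t$ is the \emph{maximum} root of $\multoper{w}\ptil$ and $\multoper{w}\phat$ rather than merely \emph{a} root---a point the paper leaves to the reader.
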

\begin{proof}
To derive this from Lemma \ref{lem:pinch}, let $t = \maxroot{\multoper{w} p}$
  and set
\[
\alpha = \frac{t}{d (w+1)}.
\]
The polynomials $\ptil$ and $\phat$ constructed in Lemma \ref{lem:pinch} now
  satisfy 
\[
\maxroot{\shiftoper{\alpha} \phat} = 
\maxroot{\shiftoper{\alpha} \ptil} = 
t
=
\maxroot{\multoper{w} \ptil} = 
\maxroot{\multoper{w} \phat}, 
\]
as desired.
\end{proof}

\begin{proof}[Proof of Theorem~\ref{thm:multTransBound}]
We proceed by induction on $d$, the maximum degree of $p$ and $q$.
The theorem is true for $d=1$ by Lemma \ref{lem:multTransOne}.
As we have already shown that equality holds when one of $p$ or $q$ has just one root,
  we need to show that when both $p$ and $q$ have at least two distinct roots:
\[
\maxroot{  \multoper{w} p \sqmult_{d} q}
<
\frac{w}{1+w}
\maxroot{  \multoper{w} p }
\maxroot{  \multoper{w} q }.
\]
Fix $q\in\rrpos{d}$ with at least two distinct roots, and for $p \in \RRpos$ define:
\[
 \phi (p) \defeq \frac{w}{1+w}
\maxroot{  \multoper{w} p }
\maxroot{  \multoper{w} q }
-
\maxroot{  \multoper{w} p \sqmult_{d} q}.
\]
As before, we assume (for contradiction) that there exists a monic $p$ with two distinct roots and $\phi(p) \le 0$. Choose an interval $[0, R]$ containing the roots of $p$, and let $p_0$ minimize $\phi$ over all degree $d$ monic polynomials with roots in this interval.
	Observe that we can choose $p_0$ having two distinct roots: if $\phi(p_0)<0$ this is implied by Lemma \ref{lem:multTransOne} and
	if $\phi(p_0)=0$ we can take $p=p_0$.
	Thus we may apply Corollary \ref{cor:multPinch} to obtain polynomials $\ptil\in\rrpos{d}$ and $\phat\in\rrpos{d-1}$
	with  $p_0 = \ptil + \phat$,
	\begin{equation}
		\label{eqn:allequalMult}\maxroot{\multoper{w} p_0} = \maxroot{\multoper{w} \ptil} = \maxroot{\multoper{w} \phat},
	\end{equation}
and $\maxroot{\ptil}\le \maxroot{p_0}$.  By Lemma~\ref{lem:max}, we have
\[
  \maxroot{\multoper{w} (p_0 \sqmult_{d} q)} \leq \max \left\{ \maxroot{\multoper{w} (\phat \sqmult_{d} q)}, \maxroot{\multoper{w} (\ptil \sqmult_{d} q)} \right\},
\]
with equality only if all three are equal.
	However, noting that $\phat$ and $(xD-d)q= -RDRq$ have two distinct roots whenever $d\ge 3$:
\begin{align*}
  \maxroot{\multoper{w}   (\phat \sqmult_{d} q)}
& =
	\maxroot{\multoper{w} \phat \sqmult_{d-1}   ((x \deriv -d)q)}
\quad \text{by Lemma~\ref{lem:multdegm1}}
\\
& \leq 
\frac{w}{1+w}
	\maxroot{  \multoper{w} \phat  }
	\maxroot{  \multoper{w}  (x \deriv   - d) q  ) }
\quad \text{by induction, strict for $d\ge 3$}
\\
& \le 
\frac{w}{1+w}
	\maxroot{  \multoper{w} \phat  }
	\maxroot{  \multoper{w}  q  ) } \quad \text{by Claim~\ref{clm:polarDeriv}, strict for $d=2$}
\\
& =
\frac{w}{1+w}
	\maxroot{  \multoper{w} p_0  }
	\maxroot{  \multoper{w}  q  ) } \quad \text{by \eqref{eqn:allequalMult}}
\\
&\le  \maxroot{\multoper{w}   (p_0 \sqmult_{d} q)},
\end{align*}
since $\phi(p_0)\le 0$.
Since at least one of the inequalities above is strict for all $d\ge 2$, we
	must have $ \maxroot{\multoper{w}   (p_0 \sqmult_{d}
	q)}<\maxroot{\multoper{w}   (\ptil \sqmult_{d} q)}$, which implies
	$\phi(\ptil)<\phi(p_0)$, contradicting the minimality of $p_0$.
\end{proof}
 

\subsection{Asymmetric additive convolution}\label{sec:asymTrans}

The following is a restatement of Theorem \ref{thm:recsumTrans}.

\begin{theorem}\label{thm:rectConvBound}
Let $p (x)$ and $q (x)$ be in $\rrpos{d}$ for $d \geq 1$.
Then for all $\alpha \geq 0$,
\[
\maxroot{\shiftoper{\alpha} \subsquare (p \recsum_{d} q)}
\leq 
\maxroot{\shiftoper{\alpha} \subsquare p} + \maxroot{\shiftoper{\alpha} \subsquare q} - 2 \alpha d,
\]
with equality if and only if $p$ or $q$ equals $x^{d}$.
\end{theorem}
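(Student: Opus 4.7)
My proof will follow the blueprint used to prove Theorem~\ref{thm:sqsumTrans}: induction on $d$ combined with a pinching reduction, calibrated here to preserve $\maxroot{\shiftoper{\alpha}\subsquare p}$ rather than $\maxroot{\shiftoper{\alpha} p}$. The base case $d=1$ is a direct calculation. By Theorem~\ref{thm:asymmetricAdditive} we have $(x-\lambda)\recsum_1(x-\mu) = x-(\lambda+\mu)$, and since $\maxroot{\shiftoper{\alpha}(x^2-c)} = \alpha+\sqrt{\alpha^2+c}$ for $c\geq 0$, the claim reduces to
\[
\alpha + \sqrt{\alpha^2 + \lambda + \mu} \;\leq\; \sqrt{\alpha^2+\lambda} + \sqrt{\alpha^2+\mu},
\]
which follows by squaring both sides and reducing to $\alpha^2(\alpha^2+\lambda+\mu)\leq(\alpha^2+\lambda)(\alpha^2+\mu)$, with equality iff $\lambda\mu=0$.

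\textbf{Adapted pinching.} The central identity is $\cauchy{\subsquare r}{x}=x\cauchy{r}{x^2}$, obtained by pairing the roots $\pm\sqrt{\lambda_i}$ of $\subsquare r$. It implies that for $t>0$ and any $r$, $\maxroot{\shiftoper{\alpha}\subsquare r}=t$ is equivalent to $\maxroot{\shiftoper{\beta}r}=t^2$ with $\beta := 2\alpha t$, \emph{independently of $\deg r$}: if $\deg r=m$ then $\cauchy{r}{t^2}=1/(m\beta)$ gives $\cauchy{\subsquare r}{t}=t/(m\beta)=1/(2m\alpha)$, matching the condition for $\subsquare r$, which has degree $2m$. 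Hence, setting $t=\maxroot{\shiftoper{\alpha}\subsquare p}$ and applying Lemma~\ref{lem:pinch} to $p$ at parameter $\beta=2\alpha t$ produces $\ptil\in\rrpos{d}$ and $\phat\in\rrpos{d-1}$ with $p=\ptil+\phat$ and
\[
\maxroot{\shiftoper{\alpha}\subsquare\ptil} \;=\; \maxroot{\shiftoper{\alpha}\subsquare\phat} \;=\; \maxroot{\shiftoper{\alpha}\subsquare p} \;=\; t.
\]

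\textbf{Inductive step.} Define $\phi(p) := \maxroot{\shiftoper{\alpha}\subsquare p} + \maxroot{\shiftoper{\alpha}\subsquare q} - 2\alpha d - \maxroot{\shiftoper{\alpha}\subsquare(p\recsum_d q)}$ and, for arbitrary $R>0$, minimize $\phi$ over $\rrpos{d}$-polynomials with roots in $[0,R]$. If the minimizer $p$ has at least two distinct roots, I apply the adapted pinching: bilinearity of $\recsum_d$ in each argument gives $p\recsum_d q=\ptil\recsum_d q+\phat\recsum_d q$, and assuming by contradiction that $\beta' := \maxroot{\shiftoper{\alpha}\subsquare(p\recsum_d q)}$ exceeds the asserted RHS, the inductive hypothesis applied to $\phat\recsum_d q$ makes $\shiftoper{\alpha}\subsquare(\phat\recsum_d q)$ strictly positive at $\beta'$, forcing $\shiftoper{\alpha}\subsquare(\ptil\recsum_d q)$ to be strictly negative there and hence $\phi(\ptil)<\phi(p)$, contradicting minimality. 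In the remaining case the minimizer is $(x-\lambda)^d$; when $\lambda=0$ Theorem~\ref{thm:asymmetricAdditive} gives $p\recsum_d q=q$ and $\phi=0$, while for $\lambda>0$ one verifies $\phi\geq 0$ directly using the closed form $\maxroot{\shiftoper{\alpha}\subsquare (x-\lambda)^d}=d\alpha+\sqrt{d^2\alpha^2+\lambda}$ and a short convexity calculation in $\lambda$.

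\textbf{Main obstacle.} The hardest piece is making the inductive hypothesis apply cleanly to the mixed-degree convolution $\phat\recsum_d q$, in which $\deg\phat=d-1$ but $\deg q=d$. The analogous step for Theorem~\ref{thm:sqsumTrans} is handled by Lemma~\ref{lem:reduce_degree}, which has no immediately apparent counterpart for $\recsum$: the obvious candidates (replacing $Dq$ by $DxDq$ or by the polar derivative $xDq-dq$) do not reproduce the correct convolution when checked numerically. I therefore expect most of the technical work to go into establishing a suitable degree-reduction identity for $\recsum$ by coefficient comparison from Theorem~\ref{thm:asymmetricAdditive}, and verifying that it is compatible with the $\shiftoper{\alpha}\subsquare$ transform so that the pinching/induction loop closes. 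A secondary subtlety is the case $p=(x-\lambda)^d$ with $\lambda>0$, which is not itself a base case of the induction but the terminal state of the pinch reduction; here the sharp strict inequality must be extracted from the closed-form expression, rather than from equality as in the symmetric setting.
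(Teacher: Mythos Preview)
Your overall architecture---induction on $d$, the $\subsquare$-adapted pinching via $\beta=2\alpha t$, and the compactness/minimizer contradiction---matches the paper exactly, and your base case $d=1$ is correct. However, two of your claimed steps are wrong, and they are precisely the places where the real work lies.

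\textbf{The degree reduction is $\deriv x\deriv$.} Your numerical check must be in error: the paper's Lemma~\ref{lem:rectConvDiffDegs} asserts that for $p\in\rrpos{d}$ and $q\in\rrpos{k}$ with $k<d$,
\[
p\recsum_d q \;=\; \frac{1}{d^2}\,(\deriv x\deriv p)\recsum_{d-1} q,
\]
and this follows directly from the coefficient formula in Theorem~\ref{thm:asymmetricAdditive}. (Make sure you apply $\deriv x\deriv$ to the \emph{higher}-degree factor; in your pinching setup that is $q$, since $\deg\phat=d-1<d=\deg q$.) Having this identity is not enough to close the induction, though: you then need
\[
\maxroot{\shiftoper{\alpha}\subsquare\,\deriv x\deriv\, r}\;\le\;\maxroot{\shiftoper{\alpha}\subsquare r}-2\alpha
\]
for every $r\in\rrpos{d}$ (Lemma~\ref{lem:pxq0}), which the paper proves by its own pinching argument reducing to the single-root case $r=(x-\lambda)^d$ (Lemma~\ref{lem:p1q0}, a nontrivial explicit quartic computation).

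\textbf{The terminal case is not a short convexity calculation.} When your minimizer is $p=(x-\lambda)^d$ with $\lambda>0$, $q$ is still an \emph{arbitrary} element of $\rrpos{d}$; there is no closed form for $(x-\lambda)^d\recsum_d q$ analogous to the shift $q(x-\lambda)$ in the symmetric case, so your ``convexity in $\lambda$'' sketch has no traction. The paper instead minimizes over \emph{both} $p$ and $q$, reducing to $p=(x-\lambda)^d$, $q=(x-\mu)^d$, and then devotes an entire section (Lemma~\ref{lem:p1q1} via Section~\ref{sec:cheby}) to this case: one shows $q_d^{\lambda,\mu}=(x-\lambda)^d\recsum_d(x-\mu)^d$ satisfies a three-term recurrence identifying it with a rescaled Chebyshev polynomial $U_d$, and then bounds $\cauchy{\subsquare q_d^{\lambda,\mu}}{t}$ using sharp hyperbolic-trigonometric estimates for $U_d'/U_d$. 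This is the genuinely hard part of the theorem, and your proposal does not contain it.
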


We remark that if $q (x) = x^{d}$, then $p \recsum_{d} q = p$,
  and
\[
\shiftoper{\alpha} \subsquare q = \shiftoper{\alpha} x^{2d} = x^{2d-1} (x - 2 d \alpha),
\]
so
\[
\maxroot{\shiftoper{\alpha} \subsquare q} = 2 \alpha d.
\]
This is why Theorem \ref{thm:rectConvBound} holds with equality when $q (x) = x^{d}$.

The following lemma tells us that it suffices 
  to prove Theorem~\ref{thm:rectConvBound} in the case that $\alpha = 1$.

\begin{lemma}\label{lem:alpha1}
For a real-rooted polynomial $p (x)$,
\[
\maxroot{\shiftoper{\alpha} p (x)}
 = 
 \frac{1}{\alpha} \maxroot{\shiftoper{1} p (\alpha x)}.
\]
\end{lemma}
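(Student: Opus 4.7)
The plan is to verify this by a direct change of variables, using only the chain rule and the definition $\shiftoper{\alpha} p(x) = p(x) - \alpha \deriv p(x)$. There is no real obstacle here; the lemma simply records the fact that the operator $\shiftoper{\alpha}$ interacts with the rescaling $x \mapsto \alpha x$ in exactly the expected way.

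First I would compute the action of $\shiftoper{1}$ on the composition $p(\alpha x)$. Writing $g(x) = p(\alpha x)$, the chain rule gives $\deriv g(x) = \alpha \deriv p(\alpha x)$, and therefore
\[
\shiftoper{1} g(x) = g(x) - \deriv g(x) = p(\alpha x) - \alpha \deriv p(\alpha x) = (\shiftoper{\alpha} p)(\alpha x).
\]
So $\shiftoper{1}$ applied to $p(\alpha x)$ is just $\shiftoper{\alpha} p$ evaluated at $\alpha x$.

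Next I would translate this identity between operators into an identity between roots. The zero set of $(\shiftoper{\alpha} p)(\alpha x)$, viewed as a polynomial in $x$, is obtained from the zero set of $\shiftoper{\alpha} p(y)$ by the map $y \mapsto y/\alpha$; in particular, for $\alpha > 0$ this map is order-preserving, so the largest root is scaled by $1/\alpha$. Hence
\[
\maxroot{\shiftoper{1} p(\alpha x)} = \maxroot{(\shiftoper{\alpha} p)(\alpha x)} = \frac{1}{\alpha} \maxroot{\shiftoper{\alpha} p(x)},
\]
and rearranging gives the lemma. The only subtlety worth noting is the positivity of $\alpha$ (which is implicit throughout Section~\ref{sec:asymTrans}, since all the $\alpha$'s in the relevant transform bounds are nonnegative and the $\alpha = 0$ case of the lemma is vacuous).
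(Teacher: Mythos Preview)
Your proof is correct and is essentially identical to the paper's: both set $g(x)=p(\alpha x)$, compute $\shiftoper{1}g(x)=(\shiftoper{\alpha}p)(\alpha x)$ via the chain rule, and read off the root relation. If anything, your version is slightly more complete, since you explicitly note that $\alpha>0$ makes the rescaling order-preserving (so the \emph{maximum} root is carried to the maximum root), whereas the paper only checks that $w/\alpha$ is \emph{a} root of $\shiftoper{1}q$.
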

\begin{proof}
Let $q (x) = p (\alpha x)$, so
\[
  \shiftoper{1} q (x) = p (\alpha x) - \alpha p' (\alpha x).
\]
Let 
\[
w = \smax{\alpha}{p (x)} = \maxroot{\shiftoper{\alpha } p}
\quad \iff \quad p (w) - \alpha p' (w) = 0.
\]
Then,
\[
  (\shiftoper{1} q) (w/\alpha) = p (w) - \alpha p' (w) = 0.
\]
\end{proof}

Our proof of Theorem~\ref{thm:rectConvBound} will use the following lemma to pinch together roots
  of $p$ to reduce the analysis to a few special cases.

\begin{corollary}\label{cor:recPinch}
Let $\alpha > 0$, $d \geq 2$, and let 
  $p (x) \in \rrpos{d}$ have at least two distinct roots.
Then there exist $\ptil \in \rrpos{d}$ and $\phat \in \rrpos{d-1}$
  so that 
  $p (x) = \phat (x) + \ptil (x)$,
  the largest root of $\ptil$ is at most the largest root of $p$,
  $\phat$ has a root larger than $0$, and
\begin{equation}\label{eqn:recPinch}
\maxroot{\shiftoper{\alpha} \subsquare \ptil }
 = \maxroot{\shiftoper{\alpha} \subsquare  \phat }
 = \maxroot{\shiftoper{\alpha} \subsquare  p }
\end{equation}
\end{corollary}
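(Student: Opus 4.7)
My plan is to mimic the derivation of Corollary \ref{cor:multPinch} by applying Lemma \ref{lem:pinch} to $p$ itself (not to $\subsquare p$), with a parameter $\alpha'$ carefully chosen so that the pinching produced on the roots of $p$ induces the desired pinching on the roots of $\subsquare p$. Let $t = \maxroot{\shiftoper{\alpha} \subsquare p}$, and write the roots of $p$ in decreasing order as $\lambda_1 \geq \cdots \geq \lambda_d \geq 0$ with $\lambda_1 > \lambda_k$ for some $k$. Since $\alpha > 0$, $t$ strictly exceeds $\sqrt{\lambda_1}$, the largest root of $\subsquare p$.

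The bridge between the two operators is the elementary identity
\[
\frac{1}{t - \sqrt{\lambda_i}} + \frac{1}{t + \sqrt{\lambda_i}} = \frac{2t}{t^2 - \lambda_i}.
\]
Summing over $i$ shows that $(\shiftoper{\alpha} \subsquare p)(t) = 0$ is equivalent to $\sum_{i=1}^{d} \frac{1}{t^2 - \lambda_i} = \frac{1}{2\alpha t}$, which in turn is precisely the condition $t^2 = \maxroot{\shiftoper{\alpha'} p}$ for $\alpha' := 2\alpha t$ (since the latter is the equation $p'(t^2)/p(t^2) = 1/\alpha'$). I would then apply Lemma \ref{lem:pinch} to $p$ with parameter $\alpha'$ to obtain $\ptil \in \rrpoly{d}$ and $\phat \in \rrpoly{d-1}$ with $p = \ptil + \phat$ and
\[
\maxroot{\shiftoper{\alpha'}\ptil} = \maxroot{\shiftoper{\alpha'}\phat} = t^2.
\]

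By Lemma \ref{lem:pinch}, the new double root $\mu$ of $\ptil$ satisfies $\lambda_1 > \mu > \lambda_k \geq 0$, while the new single root $\rho$ of $\phat$ satisfies $\rho > \lambda_1$. Since $p$ has two distinct nonnegative roots, $\lambda_1 > 0$, so $\mu, \rho > 0$; this gives $\ptil \in \rrpos{d}$, $\phat \in \rrpos{d-1}$, and $\phat$ has a positive root. The bound $\mu < \lambda_1$, together with the other roots of $\ptil$ being inherited from $p$, also yields $\maxroot{\ptil} \leq \maxroot{p}$. To establish \eqref{eqn:recPinch} I run the Cauchy-transform dictionary in reverse: the identity $\sum_i \frac{1}{t^2 - \mu_i} = 1/\alpha' = 1/(2\alpha t)$ for the roots $\mu_i$ of $\ptil$ translates back into $(\shiftoper{\alpha} \subsquare \ptil)(t) = 0$, and since every root of $\subsquare \ptil$ has absolute value at most $\sqrt{\lambda_1} < t$, this $t$ is in fact the maximum such root. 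The identical argument handles $\phat$, using the fact (also from Lemma \ref{lem:pinch}) that $t^2 > \rho$ to ensure $t$ exceeds every real root of $\subsquare \phat$. The one nontrivial step is pinning down $\alpha' = 2\alpha t$; once that identification is made, everything else is routine bookkeeping from Lemma \ref{lem:pinch}.
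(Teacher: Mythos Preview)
Your proposal is correct and follows exactly the same approach as the paper: set $t=\maxroot{\shiftoper{\alpha}\subsquare p}$, observe via the identity $(\subsquare p)'(t)/(\subsquare p)(t)=2t\,p'(t^{2})/p(t^{2})$ that this is equivalent to $\maxroot{\shiftoper{2\alpha t}p}=t^{2}$, and then apply Lemma~\ref{lem:pinch} to $p$ with parameter $\alpha'=2\alpha t$. Your write-up is in fact more careful than the paper's terse version (which contains a few apparent typos), since you explicitly verify $\ptil,\phat\in\RRpos$, that $\phat$ has a positive root, and that $t$ exceeds every root of $\subsquare\ptil$ and $\subsquare\phat$ so that the computed zero of $\shiftoper{\alpha}\subsquare\ptil$ and $\shiftoper{\alpha}\subsquare\phat$ at $t$ is indeed the maximum root.
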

\begin{proof}
Let $t = \maxroot{\shiftoper{\alpha } \subsquare p}$,
  so
\[
  \maxroot{(1 - 2 \alpha t D)p} = \sqrt{t}.
\]
Apply Lemma \ref{lem:pinch} with $2 \alpha t$ in the place of $\alpha$
  to construct the polynomials $\ptil$ and $\phat$.
They satisfy
\[
\maxroot{\shiftoper{\alpha} \phat} = 
\maxroot{\shiftoper{\alpha} \ptil} = 
\sqrt{t},
\]
which implies \eqref{eqn:recPinch}.
\end{proof}

We will build up to the proof of Theorem \ref{thm:rectConvBound} by first handling three
  special cases:

\begin{itemize}
\item When $p (x) = (x-\lambda)^{d}$ and $q (x) = x^{d-1}$.   That is, we consider $\dxd (x-\lambda)^{d}$
  (Lemma \ref{lem:p1q0}).

\item When $p (x) \in \rrpos{d}$  and $q (x) = x^{d-1}$.
  That is, we consider $\dxd p (x)$
  (Lemma \ref{lem:pxq0}).

\item When $p (x) = (x-\lambda )^{d}$ and $q (x) = (x-\mu )^{d}$
  (Lemma \ref{lem:p1q1}).
\end{itemize}

As with the other convolutions, we may compute the asymetric additive convolution of two
  polynomials by first applying an operation to the polynomial of higher degree.
In this case it is $\dxd$, also known as the ``Laguerre Derivitive''. \dan{cite something?}

\begin{lemma}[Degree Reduction for $\recsum_d$]\label{lem:rectConvDiffDegs}
Let $p \in \rrpos{d}$ and let $q \in \rrpos{k}$ for $k < d$.
Then,
\[
  p \recsum_{d} q = (1/d^{2}) (\dxd p) \recsum_{d-1} q.
\]
\end{lemma}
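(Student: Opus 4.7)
My plan is to apply the operator form of Theorem~\ref{thm:asymmetricAdditive} to both sides and match them term by term. The formula
\[
p \recsum_d q = \frac{1}{(d!)^2} \sum_{i=0}^d ((d-i)!)^2 b_i (\dxd)^i p
\]
expresses the convolution as a linear combination of Laguerre derivatives of $p$, with weights determined by the coefficients of the monomials $x^{d-i}$ in $q$.  These weights are intrinsic to $q$ and do not depend on the ``ambient degree'' used to encode it.

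First I would apply the formula at ambient degree $d$ to the left-hand side.  Since $\deg q = k < d$, the coefficient of $x^d$ in $q$ (the $i=0$ term in the expansion) is zero, so the sum effectively starts at $i \geq 1$.  Next I would apply the same formula at ambient degree $d-1$ to the convolution $(\dxd p)\recsum_{d-1} q$.  The coefficients of $q$ in the degree-$(d-1)$ encoding are the same monomial coefficients of $q$ with their index shifted by one, since the coefficient of $x^{d-1-j}$ is the coefficient of $x^{d-(j+1)}$.  Using the trivial identity $(\dxd)^j (\dxd p) = (\dxd)^{j+1} p$ and reindexing $i = j+1$, the right-hand side becomes a sum over $i = 1, \ldots, d$ involving exactly the same monomial-coefficient terms as the left-hand side, but with normalization $1/((d-1)!)^2$ rather than $1/(d!)^2$ and operator $(\dxd)^i$ in place of $(\dxd)^i$.

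Multiplying by $1/d^2$ converts $1/((d-1)!)^2$ into $1/(d!)^2$, and the two sides become identical term-by-term.  The only bookkeeping concern is the $(-1)^i$ factors in the convention $q = \sum x^{d-i}(-1)^i b_i$ versus the analogous convention at degree $d-1$; these cancel consistently, which is cleanest to see if one works directly with the literal monomial coefficients $[x^{d-i}]q$ of $q$ rather than their signed versions.  I do not anticipate a serious obstacle: the proof is essentially a direct unwinding of Theorem~\ref{thm:asymmetricAdditive}, hinging on the single observation that the $i=0$ term on the left-hand side vanishes precisely because $\deg q < d$.
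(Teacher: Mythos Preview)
Your proposal is correct and is exactly what the paper does: its proof is the single line ``Follows from Theorem~\ref{thm:asymmetricAdditive},'' and your plan simply unpacks that reference by matching the operator-form expansion term by term. Your instinct to track the $(-1)^i$ bookkeeping via the literal monomial coefficients $[x^{d-i}]q$ is the right one and is indeed the only place where care is needed.
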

\begin{proof}
Follows from Theorem \ref{thm:asymmetricAdditive}.
\end{proof}

The following characterization of the Laguerre derivitive also follows
  from Theorem \ref{thm:asymmetricAdditive}.

\begin{claim}\label{clm:DxD}
If $q (x) = x^{d-1}$, then
\[
  p \recsum_{d} q = \deriv x \deriv p.
\]
\end{claim}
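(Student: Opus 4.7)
The plan is to derive this directly from the explicit formula in Theorem \ref{thm:asymmetricAdditive}, as indicated by the paper's one-line proof. Since $q(x) = x^{d-1}$ has degree $d-1$ rather than $d$, I would first invoke Lemma \ref{lem:rectConvDiffDegs} to handle the mismatch, rewriting
$$p \recsum_d q = \frac{1}{d^{2}}\,(\dxd p) \recsum_{d-1} q,$$
which reduces the question to a $\recsum_{d-1}$-convolution of two polynomials of degree $d-1$.

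Next I would verify that $x^{d-1}$ is the identity element of $\recsum_{d-1}$. This is immediate from Theorem \ref{thm:asymmetricAdditive} applied with $d$ replaced by $d-1$: writing $x^{d-1} = \sum_{i=0}^{d-1} x^{d-1-i}(-1)^{i} b_{i}$, only $b_{0}=1$ is nonzero, so in the double sum defining $r \recsum_{d-1} x^{d-1}$ the only surviving contributions have $j=0$ and $i=k$, and the factorial ratio $\frac{(d-1-k)!(d-1)!}{(d-1)!(d-1-k)!}$ collapses to $1$. Thus $r \recsum_{d-1} x^{d-1} = r$ for every $r \in \rrpos{d-1}$, and applying this with $r = \dxd p$ recovers the claim (modulo the $1/d^{2}$ scaling built into Lemma \ref{lem:rectConvDiffDegs}).

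An equivalent approach would skip Lemma \ref{lem:rectConvDiffDegs} and plug $q = x^{d-1}$ directly into the ``operator'' form
$$p \recsum_d q = \left(\frac{1}{d!}\right)^{2} \sum_{i=0}^{d} ((d-i)!)^{2}\, b_{i}\, (\dxd)^{i} p(x)$$
from Theorem \ref{thm:asymmetricAdditive}; a single index $i$ contributes, and the factorials telescope to leave a constant multiple of $\dxd p$.

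The claim has no real obstacle; it is simply the assertion that $\dxd$ is the first-order differential operator naturally associated with the finite asymmetric additive convolution, in exact parallel with the role played by $D$ for the symmetric additive convolution (as used in the proof of Lemma \ref{lem:derivP}). The only mild care needed is bookkeeping the factorials and the $1/d^{2}$ factor coming from the unequal-degree convention.
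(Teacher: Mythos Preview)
Your approach is correct and matches the paper's: the paper gives no argument beyond ``follows from Theorem~\ref{thm:asymmetricAdditive},'' and both of your routes (via Lemma~\ref{lem:rectConvDiffDegs} plus the fact that $x^{d-1}$ is the identity for $\recsum_{d-1}$, or via the operator form directly) are just unpackings of that theorem. You are right to flag the $1/d^{2}$ scalar---the identity as literally stated is off by that constant---but since every downstream use involves only $\maxroot{\cdot}$, the discrepancy is harmless and the paper silently ignores it.
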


\begin{lemma}\label{lem:p1q0}
For $\alpha \geq  0$, $\lambda \geq 0$, $d \geq 2$ and $p (x) = (x-\lambda)^{d}$,
\[
\maxroot{\shiftoper{\alpha} \subsquare \dxd p}
\leq 
\maxroot{\shiftoper{\alpha} \subsquare p} - 2 \alpha ,
\]
with equality only if $\lambda = 0$.
\end{lemma}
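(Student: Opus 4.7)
The plan is an entirely explicit calculation. For $p(x)=(x-\lambda)^d$, the product rule gives $\dxd p = d(x-\lambda)^{d-2}(dx-\lambda)$, so both polynomials in question are available in closed form:
\[
\subsquare p = (x^2-\lambda)^d, \qquad \subsquare\dxd p = d(x^2-\lambda)^{d-2}(dx^2-\lambda).
\]
Setting the logarithmic derivative of $\subsquare p$, namely $2dx/(x^2-\lambda)$, equal to $1/\alpha$ gives the quadratic $x^2-2d\alpha x - \lambda = 0$, and hence $w := \maxroot{\shiftoper{\alpha}\subsquare p} = d\alpha + \sqrt{d^2\alpha^2+\lambda}$. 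Writing $s := w-2\alpha$, the goal is to show $\maxroot{\shiftoper{\alpha}\subsquare\dxd p} \leq s$.

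Since $s \geq \sqrt{\lambda} = \maxroot{\subsquare\dxd p}$ and the Cauchy transform of $\subsquare\dxd p$ is strictly decreasing on the open interval to the right of its largest root, this reduces (assuming $\alpha>0$; the $\alpha=0$ case is immediate) to checking that $(\subsquare\dxd p)'(s)/(\subsquare\dxd p)(s) \leq 1/\alpha$. Using the explicit log-derivative $2x[(d-2)/(x^2-\lambda) + d/(dx^2-\lambda)]$ and clearing denominators, this becomes
\[
2\alpha s (d-1)(ds^2-2\lambda) \leq (s^2-\lambda)(ds^2-\lambda).
\]

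To verify this cleanly, I would treat $w$ as the primary unknown and use $\lambda = w(w-2d\alpha)$ to rewrite
\[
s^2-\lambda = 2\alpha[(d-2)w+2\alpha], \quad ds^2-\lambda = (d-1)w^2-2d\alpha w + 4d\alpha^2, \quad ds^2-2\lambda = (d-2)w^2+4d\alpha^2.
\]
Expanding both sides as cubics in $(w,\alpha)$, the cubic and most lower-order terms cancel, and the difference collapses to exactly $2\alpha(w-2d\alpha)^2$. Since $w-2d\alpha = \sqrt{d^2\alpha^2+\lambda}-d\alpha \geq 0$, this proves the inequality; equality (with $\alpha>0$) forces $w = 2d\alpha$, i.e., $\lambda=0$. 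The main obstacle is picking the right parameterization: a direct expansion in $(s,\alpha,\lambda)$ reaches the answer but obscures the perfect-square structure that delivers the sharp equality condition.
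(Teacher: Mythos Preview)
Your proof is correct. Both you and the paper carry out an explicit computation that terminates in a perfect square, but the routes differ in one meaningful way.

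The paper first normalises to $\alpha=1$, writes $\shiftoper{1}\subsquare\dxd p = q_\lambda(x)\,(x^2-\lambda)^{d-2}$ for an explicit quartic $q_\lambda$, and then evaluates $q_\lambda$ at the target point $\mu_\lambda = d+\sqrt{d^2+\lambda}-2$, obtaining $(2d-2\sqrt{d^2+\lambda})^2\ge 0$. Because $q_\lambda(\mu_\lambda)\ge 0$ alone does not rule out two roots of $q_\lambda$ lying above $\mu_\lambda$, the paper finishes with a continuity argument in $\lambda$ starting from $\lambda=0$.

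You instead use the monotonicity of the logarithmic derivative of $\subsquare\dxd p$ on $(\sqrt{\lambda},\infty)$ to reduce the root bound to the single inequality
\[
2\alpha s(d-1)(ds^2-2\lambda)\ \le\ (s^2-\lambda)(ds^2-\lambda),
\]
and then the reparametrisation $\lambda=w(w-2d\alpha)$ collapses the difference to $4\alpha^2(w-2d\alpha)^2$. This bypasses the continuity step entirely and reads off the equality case $\lambda=0$ directly. (One small imprecision: for $d=2$ the factor $(x^2-\lambda)^{d-2}$ is absent, so $\maxroot{\subsquare\dxd p}=\sqrt{\lambda/2}$ rather than $\sqrt{\lambda}$; this only strengthens your hypothesis $s>\maxroot{\subsquare\dxd p}$ and does not affect the argument.)
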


\begin{proof}
By Lemma~\ref{lem:alpha1}, it suffices to consider the case of $\alpha = 1$.
As $\subsquare  p (x) = (x^{2} - \lambda)^{d}$,
\[
  \shiftoper{1 } \subsquare p (x) = (x^{2} - 2 \lambda  d - \lambda ) (x^{2} - \lambda)^{d-1}.
\]
So, the largest root of this polynomial is the largest root of 
\[
  r_{\lambda} (x) \defeq  (x^{2} - 2 \lambda d - \lambda ).
\]

We may also compute
\[
  \shiftoper{1 }  \subsquare \deriv x \deriv (x^{2}- \lambda)^{d}
= 
  q_{\lambda } (x) (x^{2} - \lambda)^{d-2},
\]
where
\[
  q_{\lambda} (x) \defeq d x^{4} - 2 d (d-1) x^{3}  - (d+1) \lambda x^{2} + 4 (d-1) \lambda x + \lambda^{2}.
\]

We now prove that
\[
 \maxroot{q_{\lambda }}  \leq \maxroot{r_{\lambda }} - 2,
\]
with equality only if $\lambda = 0$.

We first argue that $q_{\lambda} (x)$ is real rooted.
This follows from that fact that it is a factor of
  $\shiftoper{1} \subsquare \deriv x \deriv (x-\lambda)^{d}$.
For $\lambda \geq  0$
  all of the roots of $\deriv x \deriv (x-\lambda)^{d}$ are nonnegative, and so
  applying $\subsquare$ to it yields a polynomial with all real roots.

We now compute
\[
  \maxroot{r_{\lambda }} = d + \sqrt{d^{2} + \lambda}.
\]
Define
\[
  \mu_{\lambda} = d + \sqrt{d^{2} + \lambda} - 2.
\]
Elementary algebra gives
\[
  q_{\lambda } (\mu_{\lambda}) = 4 \lambda - 8 d \sqrt{d^{2} + \lambda} + 8 d^{2}
= 
  (2 d - 2 \sqrt{d^{2} + \lambda})^{2}.
\]
So, $q_{\lambda } (\mu_{\lambda}) \geq 0$,
  with equality only when $\lambda = 0$.
With just a little more work, we will show that $\mu_{\lambda}$ is an upper bound
  on the roots of $q_{\lambda }$ for all $\lambda$.

For $q_{\lambda }$ to have a root larger than $\mu_{\lambda}$, it would have to have two roots
  larger than $\mu_{\lambda}$.
When $\lambda = 0$, the polynomial $q_{\lambda }$ has one root at $\mu_{0}$ and a root at $0$
  with multiplicity $3$.
As $q_{\lambda }$ is real rooted for all $\lambda \geq 0$ and the roots of $q_{\lambda }$ are continuous functions
  of its coefficients, and thus of $\lambda$, we can conclude that for small $\lambda$ all but one of the roots
  of $q_{\lambda}$ must be near $0$.
Thus, for sufficiently small $\lambda$, $q_{\lambda}$ can have at most one root greater than $\mu_{\lambda}$, and so
  it must have none.
As the largest root of $q_{\lambda}$ and $\mu_{\lambda}$ are continuous function of $\lambda$, 
  $\maxroot{q_{\lambda}} > \mu_{\lambda}$ for all sufficiently  small $\lambda $.
As
  $q_{\lambda} (\mu_{\lambda}) > 0$ for all $\lambda  \geq 0$, we can conclude that
  $\maxroot{q_{\lambda}} > \mu_{\lambda}$ for all $\lambda \geq 0$.
\end{proof}

To see that Lemma~\ref{lem:p1q0} is equivalent Theorem~\ref{thm:rectConvBound} in the case of $q = x^{d-1}$,
  note that for $q (x) = x^{d-1}$,
\[
  \shiftoper{\alpha} \subsquare q (x) = \shiftoper{\alpha} q (x^{2}) = x^{2 (d-1)} - \alpha D x^{2 (d-1)}
  = x^{2d-3} (x - 2 (d-1) \alpha).
\]
The equivalence now follows from Claim~\ref{clm:DxD} and the fact that the 
  the largest root of this polynomial is $2 (d-1) \alpha$.

\begin{lemma}\label{lem:pxq0}
For $\alpha \geq  0$, $d \geq 2$ and $p \in \rrpos{d}$,
\[
\maxroot{\shiftoper{\alpha} \subsquare \dxd p}
\leq 
\maxroot{\shiftoper{\alpha} \subsquare p} - 2 \alpha,
\]
with equality only if $p (x) = x^{d}$.
\end{lemma}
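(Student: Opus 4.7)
The plan is to prove the statement by induction on $d$, following the same minimizer-and-pinch strategy used to establish Lemma~\ref{lem:derivP} and Theorem~\ref{thm:sqsumTrans}. The base case $d=2$ requires no separate argument, because when one pinches a degree-two polynomial the resulting degree-one piece $\phat$ satisfies $\dxd \phat = c$ for a positive constant $c$, which is why the argument closes uniformly.

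Fix $R > 0$ and define
\[
  \phi(p) \defeq \maxroot{\shiftoper{\alpha} \subsquare p} - 2\alpha - \maxroot{\shiftoper{\alpha} \subsquare \dxd p}
\]
on the compact set of $p \in \rrpos{d}$ whose roots lie in $[0,R]$. Let $p$ be a minimizer; the aim is to show $\phi(p) \geq 0$. If $p$ has only one distinct root, so that $p(x) = (x-\lambda)^d$, then $\phi(p) \geq 0$ is exactly Lemma~\ref{lem:p1q0}. Otherwise $p$ has at least two distinct roots, and I suppose for contradiction that $\beta' \defeq \maxroot{\shiftoper{\alpha} \subsquare \dxd p}$ is strictly larger than $\beta \defeq \maxroot{\shiftoper{\alpha} \subsquare p} - 2\alpha$.

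Apply Corollary~\ref{cor:recPinch} to produce $\ptil \in \rrpos{d}$, with roots still in $[0,R]$, together with $\phat \in \rrpos{d-1}$ satisfying $p = \ptil + \phat$ and $\maxroot{\shiftoper{\alpha} \subsquare \ptil} = \maxroot{\shiftoper{\alpha} \subsquare \phat} = \maxroot{\shiftoper{\alpha} \subsquare p}$. If $d \geq 3$ then $\phat$ has degree $d-1 \geq 2$ and the inductive hypothesis gives $\maxroot{\shiftoper{\alpha} \subsquare \dxd \phat} \leq \beta$; if $d = 2$ then $\phat$ is linear with positive leading coefficient (by \eqref{eqn:pinchMuAvg}), so $\dxd \phat$ is a positive constant and $\shiftoper{\alpha} \subsquare \dxd \phat$ is positive everywhere. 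Either way, $(\shiftoper{\alpha} \subsquare \dxd \phat)(\beta') > 0$. Since $\shiftoper{\alpha}$, $\subsquare$, and $\dxd$ are linear and $\ptil = p - \phat$, and since $(\shiftoper{\alpha} \subsquare \dxd p)(\beta') = 0$, we obtain
\[
  (\shiftoper{\alpha} \subsquare \dxd \ptil)(\beta') = -(\shiftoper{\alpha} \subsquare \dxd \phat)(\beta') < 0.
\]
As $\shiftoper{\alpha} \subsquare \dxd \ptil$ has positive leading coefficient, this sign change forces a real root larger than $\beta'$, so $\phi(\ptil) < \phi(p)$, contradicting the choice of $p$.

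The hardest part will be the degree-one base case of the pinch: one must verify that the linear $\phat$ produced by Lemma~\ref{lem:pinch} has strictly positive leading coefficient (which comes from \eqref{eqn:pinchMuAvg}), so that $\dxd \phat$ is a positive constant and $\shiftoper{\alpha} \subsquare \dxd \phat$ remains identically positive. The equality claim $p = x^d$ then follows by tracing the cases: Lemma~\ref{lem:p1q0} forces $\lambda = 0$ in the single-root scenario, while any multi-root minimizer would produce the strict contradiction above.
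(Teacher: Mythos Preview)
Your proof is correct and follows essentially the same pinching-and-minimizer argument as the paper. One small refinement: the paper takes the contradiction hypothesis to be $\phi(p)\le 0$ (weak inequality) rather than strict, and uses the fact that Corollary~\ref{cor:recPinch} guarantees $\phat$ has a strictly positive root---so the inductive bound on $\phat$ is strict---which disposes of the equality characterization in the same pass rather than via the separate trace-back you sketch at the end.
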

\begin{proof}
For every $p \in \RRpos$, define
\[
  \phi (p) = 
\maxroot{\shiftoper{\alpha} \subsquare p}
- 
\maxroot{\shiftoper{\alpha} \subsquare \dxd p}
-
2 \alpha .
\]
We will show that $\phi (p) \geq 0$ for every polynomial $p \in  \RRpos$ of degree at least $2$,
   with equality only when $p = x^{d}$.

Our proof will be by induction on the degree of $p$.
Assume that there exists a polynomial $p \in \rrpos{d}$ with $\phi(p) < 0$, let $[0, R]$ be an interval containing the roots of $p$, and let $p_0$ minimize $\phi$ over polynomials with roots in that interval.
By Lemma~\ref{lem:p1q0}, $p_0$ must have at least 2 distinct roots, and so we can apply Corollary~\ref{cor:recPinch} to obtain
   polynomials $\phat$ and $\ptil$.
   
 Let $x = \maxroot{\shiftoper{\alpha} \subsquare \dxd  p}$.
  If $d=2$, then $\phat$ has degree $1$ and so $\shiftoper{\alpha} \subsquare \dxd \phat$ equals the lead coefficient of $\phat$, which implies
\begin{equation}\label{eqn:p1q0}
   \shiftoper{\alpha} \subsquare \dxd \phat (x) > 0.
\end{equation}
 For $d \geq 3$, we can then assume by induction that $\phi (\phat) > 0$, which then implies \eqref{eqn:p1q0} as well.
 Combining this with Lemma~\ref{lem:max}, we get that $\phi(p_0) > \phi(\ptil)$ which contradicts the minimality of $p_0$.

\end{proof}

In Section \ref{sec:cheby}, we establish the following special case of Theorem~\ref{thm:rectConvBound}.
\begin{lemma}\label{lem:p1q1}
For $\lambda, \mu > 0$, and $d \geq 1$, let $p (x) = (x - \lambda )^{d}$ and $q (x) = (x-\mu )^{d}$.
Then for all $\alpha \geq 0$,
\[
\maxroot{\shiftoper{\alpha} \subsquare (p \recsum_{d} q)}
<
\maxroot{\shiftoper{\alpha} \subsquare p} + \maxroot{\shiftoper{\alpha} \subsquare q} - 2 \alpha d.
\]
\end{lemma}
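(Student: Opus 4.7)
The plan is to derive a closed form for $p \recsum_d q$ in terms of Chebyshev polynomials of the second kind, and then reduce the stated inequality to a hyperbolic identity that can be checked directly. By Lemma~\ref{lem:alpha1} I may assume $\alpha=1$. Taking $A = \sqrt\lambda\, I$ and $B = \sqrt\mu\, I$ in the definition of $p \recsum_d q$ and setting $W = RQ$ (which is Haar distributed on $O(d)$), one obtains
\[
(A+RBQ)(A+RBQ)^{T} = (\lambda + \mu) I + \sqrt{\lambda\mu}\,(W + W^{T}).
\]
Combined with the identity $\E_W \det(yI - W - W^T) = U_d(y/2)$ for Haar $W \in O(d)$, where $U_d$ denotes the $d$th Chebyshev polynomial of the second kind, this yields
\[
p \recsum_d q(x) \;=\; (\lambda\mu)^{d/2}\, U_d\!\left(\frac{x-\lambda-\mu}{2\sqrt{\lambda\mu}}\right)
\;=\; \prod_{k=1}^{d} \!\left(x - \lambda - \mu - 2\sqrt{\lambda\mu}\cos\tfrac{k\pi}{d+1}\right).
\]

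For the right-hand side of the target inequality, the identity $\shiftoper{1}(x^2-\nu)^d = (x^2-\nu)^{d-1}(x^2 - 2dx - \nu)$ gives $\maxroot{\shiftoper{1}\subsquare(x-\nu)^d} = d + \sqrt{d^2+\nu}$, so the asserted bound becomes $X := A_{0} + B_{0}$ with $A_{0} := \sqrt{d^2+\lambda}$ and $B_{0} := \sqrt{d^2+\mu}$. For the left-hand side, the condition $\shiftoper{1}\subsquare(p \recsum_d q)(x) = 0$ at $x$ exceeding every root of $\subsquare(p \recsum_d q)$ is equivalent to $f(x)=1$, where
\[
f(x) \;:=\; 2x\sum_{k=1}^{d} \frac{1}{x^2 - \lambda - \mu - 2\sqrt{\lambda\mu}\cos\tfrac{k\pi}{d+1}}.
\]
A direct computation shows that $f$ is strictly decreasing on this interval from $+\infty$ down to $0$, so it suffices to establish $f(X) < 1$.

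The evaluation of $f(X)$ rests on the algebraic identity
\[
(d^2 + A_{0}B_{0})^{2} - \lambda\mu \;=\; d^{2}(A_{0}+B_{0})^{2},
\]
which follows from expanding $A_{0}^{2} B_{0}^{2} = (d^2+\lambda)(d^2+\mu)$. Defining $\psi>0$ by $\cosh\psi = (d^2+A_{0}B_{0})/\sqrt{\lambda\mu}$, this identity forces $\sinh\psi = d(A_{0}+B_{0})/\sqrt{\lambda\mu}$. Applying the partial-fraction formula $U_d'(z)/U_d(z) = \sum_{k} 1/(z - \cos(k\pi/(d+1)))$ and the hyperbolic closed form $U_d'(\cosh\psi)/U_d(\cosh\psi) = \big((d+1)\coth((d+1)\psi) - \coth\psi\big)/\sinh\psi$, the expression for $f(X)$ collapses to
\[
f(X) \;=\; \frac{(d+1)\coth((d+1)\psi) - \coth\psi}{d}.
\]
Thus the lemma reduces to proving $(d+1)\coth((d+1)\psi) < d + \coth\psi$ for every $\psi>0$. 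Setting $g(\psi) := (d+1)\coth((d+1)\psi) - \coth\psi - d$, one checks that $g(\psi) \to 0$ as $\psi\to\infty$ and that $g'(\psi) = \mathrm{csch}^{2}\psi - (d+1)^{2}\mathrm{csch}^{2}((d+1)\psi) > 0$, the latter being equivalent to $\sinh((d+1)\psi) > (d+1)\sinh\psi$ for $\psi>0$, a consequence of strict convexity of $\sinh$ on $[0,\infty)$ with $\sinh(0)=0$. Hence $g$ is strictly increasing with supremum $0$, giving $g(\psi)<0$ throughout $(0,\infty)$. The main obstacle is establishing the Chebyshev identity $\E_W \det(yI - W - W^T) = U_d(y/2)$ for Haar $W \in O(d)$; once the closed form for $p \recsum_d q$ is in hand, the remainder of the proof is a sequence of algebraic reductions culminating in the classical hyperbolic inequality above.
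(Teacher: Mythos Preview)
Your argument follows the same overall arc as the paper's: identify $p\recsum_d q$ as a scaled and shifted Chebyshev polynomial of the second kind, then reduce the root bound to the hyperbolic inequality $(d+1)\coth((d+1)\psi) < d + \coth\psi$. The paper fills your acknowledged gap---the identity $\E_W\det(yI-W-W^T) = U_d(y/2)$---by a different route: rather than integrating over $O(d)$, it derives a three-term recurrence for $q_d^{\lambda,\mu}$ directly from the coefficient formula of Theorem~\ref{thm:asymmetricAdditive} and matches it to the Chebyshev recurrence (Lemma~\ref{lem:rectToCheby}). Once the Chebyshev form is in place, your evaluation $f(X)=\big((d+1)\coth((d+1)\psi)-\coth\psi\big)/d$ is exactly the computation the paper carries out in the proof of Theorem~\ref{thm:chebyBarrier}, and your proof of the resulting inequality via $\sinh((d+1)\psi)>(d+1)\sinh\psi$ is a slight streamlining of the paper's argument, which deduces the same inequality from convexity of $t\mapsto(1+e^{-\alpha/t})/(1-e^{-\alpha/t})$ in Claims~\ref{clm:cheby2} and~\ref{clm:cheby1}. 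One minor omission: the reduction to $\alpha=1$ via Lemma~\ref{lem:alpha1} is only valid for $\alpha>0$; the case $\alpha=0$ needs to be handled separately, but it is immediate from the Chebyshev form since $\cos(\pi/(d+1))<1$.
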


We now use Lemma~\ref{lem:p1q1} to prove Theorem~\ref{thm:rectConvBound} through 
  a variation of the pinching argument employed in the proof of Lemma~\ref{lem:p1q0}.

\begin{proof}[Proof of Theorem~\ref{thm:rectConvBound}]
We will prove this by induction on the maximum degree of $p$ and $q$, which we call $d$.
Our base case of $d=1$ is handled by Lemma~\ref{lem:p1q1}.

Assume, without loss of generality, that the degree of $p$ is at least the degree of $q$.
If the degree of $p$ is larger than the degree of $q$, then we may
  prove the hypothesis by
\begin{align*}
  \maxroot{\shiftoper{\alpha} \subsquare  (p \recsum_{d} q)}
& =
  \maxroot{\shiftoper{\alpha} \subsquare  ((\dxd  p) \recsum_{d-1} q)}
\quad \text{(by Lemma~\ref{lem:rectConvDiffDegs})}
\\
& \leq \maxroot{\shiftoper{\alpha} \subsquare (\dxd p)} + \maxroot{\shiftoper{\alpha} \subsquare q} - 2 \alpha (d-1)
\quad \text{(by induction)}
\\
& \leq  \maxroot{\shiftoper{\alpha} \subsquare p} + \maxroot{\shiftoper{\alpha} \subsquare q} - 2 \alpha d
\quad \text{(by Lemma~\ref{lem:p1q0})}.
\end{align*}
Lemma \ref{lem:p1q0} also tells us that equality is only achieved when $p = x^{d}$.
 
We now consider the case in which both $p$ and $q$ have degree $d$.
For polynomials $p$ and $q$ in $\rrpos{d}$, define
\[
  \phi (p,q) = 
 \maxroot{\shiftoper{\alpha} \subsquare p} + \maxroot{\shiftoper{\alpha} \subsquare q} - 2 \alpha d
 -   \maxroot{\shiftoper{\alpha} \subsquare  (p \recsum_{d} q)}.
\]
We will prove that $\phi (p,q) \geq 0$ for all such polynomials.

Assume (for contradiction) that there exist polynomials $p, q$ with $\phi(p, q) < 0$ and let $[0, R]$ be an interval containing the roots of $p$ and $q$.
Again, $\phi$ is a continuous function (this time on the compact set $[0, R]^{2d}$) so let $p_0, q_0$ be a minimizer.
If {\em both} $p_0$ and $q_0$ have at most 1 distinct root, then Lemma \ref{lem:p1q1} implies $\phi (p_0,q_0) \geq 0$,
with equality only if one of them equals $x^{d}$ (a contradiction).
Hence we can assume without loss of generality that $p_0$ has at least 2 distinct roots and so Corollary \ref{cor:multPinch} provides polynomials $\phat$ and $\ptil$ with 
\[
\maxroot{\shiftoper{\alpha}\subsquare p_0} = \maxroot{\shiftoper{\alpha} \subsquare \ptil} = \maxroot{\shiftoper{\alpha} \subsquare  \phat}  
\]
and by Lemma~\ref{lem:max}
\[
\maxroot{\shiftoper{\alpha}\subsquare  (p_0 \recsum_{d} q_0)} \leq \max \{ \maxroot{\shiftoper{\alpha} \subsquare (\phat \recsum_{d} q_0)}, \maxroot{\shiftoper{\alpha} \subsquare (\ptil \recsum_{d} q_0)}
\]
which in turn implies $\phi(p_0, q_0) \geq \min{ \phi(\phat, q_0), \phi(\ptil, q_0) }$ with equality if and only if all are equal.
 Again equality cannot occur, since $\phi(p_0, q_0) < 0$ by assumption and $\phi(\phat, q_0) \geq 0$ by the inductive hypothesis and $\phi(p_0, q_0) > \phi(\phat, q_0)$ cannot occur for the same reason.
 But this implies $\phi(p_0, q_0) > \phi(\ptil, q_0)$, which contradicts the minimality of the pair $(p_0, q_0)$.
%
%
%

\end{proof}

\subsection{Ultraspherical Polynomials}\label{sec:cheby}

This section is devoted to the proof of Lemma~\ref{lem:p1q1}.
It is a consequence of the following lemma.

\begin{lemma}\label{lem:marcus}
For $d \geq 0$ and positive $\lambda$ and $\mu$,
\[
  \maxroot{\shiftoper{\alpha} \subsquare ((x-\lambda)^d \recsum_{d} (x-\mu)^d)}
  <
  \maxroot{\shiftoper{\alpha} (\subsquare (x-\lambda)^d \sqsum_{2d} \subsquare (x-\mu)^d))}.   
\]
\end{lemma}
Lemma~\ref{lem:p1q1} then follows from Theorem~\ref{thm:sqsumTrans}.
We will prove Lemma~\ref{lem:marcus} by showing that the polynomial
  on the left is a scaled Chebyshev polynomial of the second kind,
  and that the polynomial on the right is a Legendre polynomial with the same scaling.
We then appeal to known relations between the roots of these polynomials.

The Cheybshev and Legendre polynomials are both Ultraspherical (also called Gegenbauer) polynomials.  These are special cases of Jacobi polynomials.  It is known that their roots all lie between $-1$ and $1$ and that they are symmetric about zero (see Theorem 3.3.1 of \cite{szego1939orthogonal}).

The Ultraspherical polynomials with parameter $\alpha$ are defined by the generating function
\begin{equation}\label{eq:gegGenFunc}
\sum_n C^{(\alpha)}_n(x) t^n = \frac{1}{(1 - 2 x t + t^2)^{\alpha}}.
\end{equation}
Two special instances of these polynomials are 
\begin{enumerate}
\item the Legendre polynomials: $P_d (x) = C^{(1/2)}_d(x)$, and
\item the Chebyshev polynomials of the second kind: $U_{d}(x) = C^{(1)}_d(x)$.
\end{enumerate}

Stieltjes \cite{stieltjes1887racines} (see Theorem 6.21.1 of \cite{szego1939orthogonal}) established the following relation between 
  the zeros of the Chebyshev and Legendre polynomials.
\begin{theorem}\label{thm:stieltjes}
Let $\alpha_1 > \alpha_2 > \cdots > \alpha_{\lfloor d/2 \rfloor}$ be the positive
 roots of $U_{d}(x)$
 and let  $\beta_1 > \beta_2 > \cdots > \beta_{\lfloor d/2 \rfloor}$ be the positive
 roots of $P_{d}(x)$.
 Then, $\alpha_i < \beta_i$ for all $i$.
\end{theorem}

The relationship between the asymmetric additive convolution and the Chebyshev polynomials will make use of the generating function (\ref{eq:gegGenFunc}).
To aid in this endeavor, we recall the following well-known generalization of the {\em binomial theorem} (see, for example, \cite{wilf2005generatingfunctionology}).
\begin{theorem}\label{thm:binom}
The function $(1+z)^{-k}$ has the formal power series expansion
\[
\frac{1}{(1+z)^k} 
= \sum_{i=0}^{\infty} \binom{k+i-1}{i}(-z)^i.
\]
\end{theorem}

\begin{lemma}\label{lem:cheby}
For $d \geq 0$ and positive $\lambda$ and $\mu$, 
\[
  \subsquare ((x-\lambda)^d \recsum_{d} (x-\mu)^d)  
  =  (\lambda \mu)^{d/2}
  U_{d} \left(\frac{x - (\lambda +\mu)}{2 \sqrt{\lambda \mu }} \right).
\]
\end{lemma}

\begin{proof}
By (\ref{eq:gegGenFunc}), we have
\[
\sum_{d} U_d(x)~t^d 
= \frac{1}{1 - 2 xt + t^2}
\]
and so 
\begin{align*}
\sum_{d} \sqrt{\lambda \mu}^d U_d \left(\frac{x - \lambda - 
\mu}{2 \sqrt{\lambda \mu}}\right) ~t^d 
&= \frac{1}{1 - (x - \lambda - \mu)t + \lambda \mu t^2}
\\&= \frac{1}{( 1- \lambda t)(1 - \mu t) - x t}
\\&= \frac{1}{( 1- \lambda t)(1 - \mu t)}\frac{1}{1 - \frac{x t}{(1 -\lambda t)(1 - \mu t)}}
\\&= \sum_{k \geq 0} \frac{x^{k} t^k}{(1-\lambda t)^{k+1}(1-\mu t)^{k+1}}
\\&= \sum_{i, j, k \geq 0} \binom{k+i}{i}\binom{k+j}{j} x^{k}(-\lambda)^i(-\mu)^j t^{i+j+k}
\end{align*}
so the coefficient of $t^d$ can be written (setting $\ell = d - k$)
\[
\sum_{\ell=0}^d x^{d-\ell} \sum_{i+j = \ell}\binom{d-j}{i} \binom{d - i}{j} (-\lambda)^i(-\mu)^j.
\]

On the other hand, the formula for the asymmetric convolution also gives us
\begin{align*}
(x-\lambda)^d \recsum_d (x-\mu)^d
&=
\sum_{k=0}^d x^{d-k} \sum_{i+j=k} \left(\frac{(d-i)!(d-j)!}{d!(d-i-j)!}\right)^2 \binom{d}{i}\binom{d}{j}(-\mu)^{i}(-\lambda)^{j}
\\&= 
\sum_{k=0}^d x^{d-k} \sum_{i+j=k} \frac{(d-i)!(d-j)!}{i!j!(d-i-j)!(d-i-j)!} (-\mu)^{i}(-\lambda)^{j}
\\&=
\sum_{k=0}^d x^{d-k} \sum_{i+j = k} \binom{d - i}{j}\binom{d-j}{i} x^{i}(-\lambda)^i(-\mu)^j.
\end{align*}
\end{proof}

The relationship between the symmetric additive convolution and the Legendre polynomials can be established by applying Theorem~\ref{thm:symmetricAdditiveRoots}

\begin{lemma}\label{lem:legendre}
  For $d \geq 0$ and positive $\lambda$ and $\mu$, 
\[
(\subsquare (x-\lambda)^d )\sqsum_{2d} (\subsquare (x-\mu)^d)
= 
\frac{(4 \sqrt{\lambda \mu})^{d}}{\binom{2d}{d}} P_d\left(\frac{x^2 - \lambda - 
\mu}{2 \sqrt{\lambda \mu}}\right) 
\]
\end{lemma}
  
\begin{proof}
We start by recalling one of the well-known formulas for Legendre polynomials \cite{szego1939orthogonal}:
\[
P_d(x) = \frac{1}{2^d} \sum_i \binom{d}{i}^2 (x-1)^i (x+1)^{d-i}.
\]
On the one hand we have
\begin{align*}
P_d\left(\frac{x^2 - \lambda - \mu}{2 \sqrt{\lambda \mu}} \right) 
&= 
\frac{1}{2^d} \sum_i \binom{d}{i}^2 
\left(\frac{x^2 - \lambda - \mu}{2 \sqrt{\lambda \mu}} - 1 \right)^i 
\left(\frac{x^2 - \lambda - \mu}{2 \sqrt{\lambda \mu}} + 1\right)^{d-i}
\\&=
\frac{1}{(4 \sqrt{\lambda\mu})^d} \sum_i \binom{d}{i}^2 (x^2 - (\sqrt{\lambda} + \sqrt{\mu})^2)^i 
(x^2 - (\sqrt{\lambda} - \sqrt{\mu})^2)^{d-i}.
\end{align*}

On the other hand, we can use Theorem~\ref{thm:symmetricAdditiveRoots} to calculate $(x^2-\lambda)^d \sqsum_{2d} (x^2-\mu)^d$.
There are four possible root sums that will appear: $\{ 
\pm(\sqrt{\lambda} + \sqrt{\mu}), \pm(\sqrt{\lambda} - \sqrt{\mu})
\}$.
Furthermore, it is easy to check that 
\begin{enumerate}
\item the $\pm (\sqrt{\lambda}+\sqrt{\mu})$ terms appear the same number of times in every pairing
\item the $\pm (\sqrt{\lambda}-\sqrt{\mu})$ terms appear the same number of times in every pairing
\item the probability of having $i$ copies of $(\sqrt{\lambda}+\sqrt{\mu})$ and $d-i$ copies of $(\sqrt{\lambda}-\sqrt{\mu})$ is $\binom{d}{i}^2/\binom{2d}{d}$
\end{enumerate}

Hence we have
\[
[(x^2-\lambda)^d \sqsum_{2d} (x^2-\mu)^d] 
= 
\frac{1}{\binom{2d}{d}}
\sum_{i = 0}^d \binom{d}{i}^2 (x^2-(\sqrt{\lambda} + \sqrt{\mu})^2)^i (x^2-(\sqrt{\lambda }
- \sqrt{\mu})^2)^{d-i}.
\]
\end{proof}

\begin{proof}[Proof of Lemma \ref{lem:marcus}]
We prove this in the case that $d$ is even.
The proof with $d$ odd is similar, except that there is one extra common term for the root at $0$.
Let $1 > \alpha_1 > \alpha_2 > \cdots > \alpha_{ d/2 }$ be the positive
  roots of $U_{d}(x)$ and let $\alpha_{d+1-i} = -\alpha_i$
  be the negative roots.
Similarly, let  $1 > \beta_1 > \beta_2 > \cdots > \beta_{d}$ be the
  roots of $P_{d}(x)$.
As $(\lambda + \mu) / 2 \sqrt{\lambda \mu} \geq 1$, the 
roots of 
\[
  \subsquare ((x-\lambda)^d \recsum_{d} (x-\mu)^d)
\]
are
\[
 \pm \left(  \sqrt{\lambda \mu} \beta_i + (\lambda + \mu) \right)^{1/2}
\]
and the roots of 
\[
  (\subsquare (x-\lambda)^d )\sqsum_{2d} (\subsquare (x-\mu)^d)
\]
are
\[
 \pm \left(  \sqrt{\lambda \mu} \alpha_i + (\lambda + \mu) \right)^{1/2}.
\]
By Theorem \ref{thm:stieltjes} the largest root of the first polynomial is larger than
  the largest root of the second, and for every $t$ larger than that root,

\begin{align*}
 \cauchy{t}{    \subsquare ((x-\lambda)^d \recsum_{d} (x-\mu)^d) }
 & = 
 \frac{1}{2d} \sum_{i = 1}^{d} 
  \frac{1}{t - \left(  \sqrt{\lambda \mu} \alpha_i + (\lambda + \mu) \right)^{1/2}}
  +
  \frac{1}{t + \left(  \sqrt{\lambda \mu} \alpha_i + (\lambda + \mu) \right)^{1/2}}
  \\
  & = \frac{1}{2d} \sum_{i = 1}^{d} 
    \frac{2t}{t^2 -  \sqrt{\lambda \mu} \alpha_i - (\lambda + \mu)}
  \\
    & = \frac{1}{2d} \sum_{i = 1}^{d/2} 
      \frac{2t}{t^2 - (\lambda + \mu) -  \sqrt{\lambda \mu} \alpha_i }
  +
   \frac{2t}{t^2 - (\lambda + \mu) + \sqrt{\lambda \mu} \alpha_i }
\\ 
& = \frac{1}{2d} \sum_{i = 1}^{d/2} 
\frac{4 t (t^2 - (\lambda + \mu))}{(t^2 - (\lambda + \mu))^2 -  \lambda \mu \alpha_i^2 }
\\
& < \frac{1}{2d} \sum_{i = 1}^{d/2} 
\frac{4 t (t^2 - (\lambda + \mu))}{(t^2 - (\lambda + \mu))^2 -  \lambda \mu \beta_i^2 }
  \\ &=   \cauchy{t}{    (\subsquare (x-\lambda)^d )\sqsum_{2d} (\subsquare (x-\mu)^d)   },
\end{align*}
where the last equality follows by reversing the previous reasoning.
This implies that 
\[
  \invcauchy{    \subsquare ((x-\lambda)^d \recsum_{d} (x-\mu)^d) }{w}
  <
  \invcauchy{ (\subsquare (x-\lambda)^d )\sqsum_{2d} (\subsquare (x-\mu)^d) }{w},
\]
where $w = 1/ \alpha d$.
\end{proof}

\bibliographystyle{alpha}
\bibliography{free}


\end{document}